\IfFileExists{siamart251216.cls}{\documentclass{siamart251216}}{\documentclass[11pt]{article}\usepackage{amsthm}}

\usepackage[left=1.08in,right=1.08in,top=1in,bottom=1in,marginparwidth=1.0in,marginparsep=0.06in]{geometry}
\usepackage{amsmath,amssymb,amsfonts,bm}
\usepackage{mathtools}
\usepackage{booktabs}
\usepackage{graphicx}
\usepackage{float}
\usepackage{xcolor}
\usepackage{enumitem}
\usepackage{hyperref}
\usepackage[numbers,sort&compress]{natbib}
\usepackage[T1]{fontenc}
\usepackage{lmodern}
\IfFileExists{microtype.sty}{\usepackage{microtype}}{}
\hypersetup{colorlinks=true, linkcolor=blue, citecolor=blue, urlcolor=blue}
\providecommand{\email}[1]{\href{mailto:#1}{#1}}
\allowdisplaybreaks
\makeatletter
\@ifundefined{theorem}{\newtheorem{theorem}{Theorem}[section]}{}
\@ifundefined{lemma}{\newtheorem{lemma}[theorem]{Lemma}}{}
\@ifundefined{proposition}{\newtheorem{proposition}[theorem]{Proposition}}{}
\@ifundefined{corollary}{\newtheorem{corollary}[theorem]{Corollary}}{}
\@ifundefined{definition}{\newtheorem{definition}[theorem]{Definition}}{}
\@ifundefined{newsiamremark}{%
  
  \newtheorem{assumption}[theorem]{Assumption}
}{%
  \newsiamremark{remark}{Remark}
  \newsiamremark{assumption}{Assumption}
}
\makeatother

\usepackage{cleveref}
\crefname{assumption}{Assumption}{Assumptions}
\Crefname{assumption}{Assumption}{Assumptions}
\crefname{definition}{Definition}{Definitions}
\Crefname{definition}{Definition}{Definitions}
\crefname{lemma}{Lemma}{Lemmas}
\Crefname{lemma}{Lemma}{Lemmas}
\crefname{proposition}{Proposition}{Propositions}
\Crefname{proposition}{Proposition}{Propositions}
\crefname{corollary}{Corollary}{Corollaries}
\Crefname{corollary}{Corollary}{Corollaries}
\crefname{theorem}{Theorem}{Theorems}
\Crefname{theorem}{Theorem}{Theorems}
\crefname{remark}{Remark}{Remarks}
\Crefname{remark}{Remark}{Remarks}

\newcommand{\E}{\mathbb{E}}

\newcommand{\N}{\mathcal{N}}
\newcommand{\R}{\mathbb{R}}
\newcommand{\dd}{\mathrm{d}}
\newcommand{\tr}{\mathrm{Tr}}
\newcommand{\diag}{\mathrm{diag}}
\newcommand{\Cov}{\mathrm{Cov}}

\newcommand{\KL}{\mathrm{KL}}
\newcommand{\ESS}{\mathrm{ESS}}

\newcommand{\alphat}{\alpha_t}
\newcommand{\gammat}{\gamma_t}
\newcommand{\lambdat}{\lambda_t}
\newcommand{\KOU}{K_t^{\mathrm{OU}}}
\newcommand{\rhoOU}{\rho_{y,t}}

\newcommand{\nor}[1]{\left\|#1\right\|}
\newcommand{\inner}[2]{\left\langle #1,#2\right\rangle}

\newcommand{\op}{\mathrm{op}}
\newcommand{\eff}{\mathrm{eff}}

\DeclareRobustCommand{\TWD}{\ifmmode\mathrm{TWD}\else\textnormal{\textsc{TWD}}\fi}
\DeclareRobustCommand{\TSI}{\ifmmode\mathrm{TSI}\else\textnormal{\textsc{TSI}}\fi}
\DeclareRobustCommand{\BLEND}{\ifmmode\mathrm{ScalarBlend}\else\textnormal{\textsc{Scalar Blend}}\fi}
\DeclareRobustCommand{\MBLEND}{\ifmmode\mathrm{MatrixBlend}\else\textnormal{\textsc{Matrix Blend}}\fi}
\DeclareRobustCommand{\USBLEND}{\ifmmode\mathrm{UniformScalarBlend}\else\textnormal{\textsc{Uniform Scalar Blend}}\fi}
\DeclareRobustCommand{\UMBLEND}{\ifmmode\mathrm{UniformMatrixBlend}\else\textnormal{\textsc{Uniform Matrix Blend}}\fi}
\DeclareRobustCommand{\LFGI}{\ifmmode\mathrm{LFGI}\else\textnormal{\textsc{LFGI}}\fi}
\DeclareRobustCommand{\SNIS}{\ifmmode\mathrm{SNIS}\else\textnormal{\textsc{SNIS}}\fi}

\newcommand{\sMBLEND}{\widehat s_{\rm MB}}

\newcommand{\sLFGI}{\widehat s_{\LFGI}}

\newcommand{\Gstar}{G_\star}
\newcommand{\Ghat}{\widehat G}
\newcommand{\Gainstar}{\Delta\mathcal R_\star}
\newcommand{\lambdaridge}{\lambda_{\rm ridge}}
\newcommand{\Ns}{N_{\rm s}}
\newcommand{\Ng}{N_{\rm g}}
\newcommand{\Gstareps}{G_{\star,\varepsilon}}
\newcommand{\Cebd}{C_{\eb\delta}}
\newcommand{\Cdd}{C_{\delta\delta}}
\newcommand{\Debd}{\Delta_{\eb\delta}}
\newcommand{\Ddd}{\Delta_{\delta\delta}}
\newcommand{\Lambdapole}{\Lambda_{\rm pole}}
\newcommand{\Hhat}{\widehat H}
\newcommand{\AhatN}{\widehat A_N}
\newcommand{\eb}{e_b}
\newcommand{\OmegaG}{\Omega}
\newcommand{\Psiop}{\Psi_t}

\DeclareMathOperator*{\argmin}{arg\,min}

\newcommand{\qpf}{q^{\mathrm{PF}}}
\newcommand{\Epf}{E^{\mathrm{PF}}}
\newcommand{\logZ}{\log Z}
\newcommand{\GN}{\mathrm{GN}}
\newcommand{\bank}{\mathcal B}
\newcommand{\tmin}{t_{\min}}
\newcommand{\tmax}{t_{\max}}
\newcommand{\phibase}{\varphi}
\newcommand{\tildep}{\widetilde p_0}

\title{Laplace--Fisher Gate Identities for\\
Optimal Matrix-Gated Blended Score Estimation}

\author{%
 Alois Duston%
 \thanks{The Oden Institute for Computational Engineering and Sciences,
 The University of Texas at Austin (\email{alois.duston@utexas.edu}).}%
 \and
 Tan Bui-Thanh%
 \thanks{The Oden Institute for Computational Engineering and Sciences, and
 the Department of Aerospace Engineering \& Engineering Mechanics,
 The University of Texas at Austin (\email{tanbui@oden.utexas.edu}).}%
}

\date{\today}

\begin{document}
\maketitle

\begin{abstract}
Sampling from an unnormalized target density by reversing an Ornstein--Uhlenbeck diffusion requires the score of each noise-perturbed marginal law.  Two exact identities are available: Tweedie's identity and a target-score identity, each yielding unbiased finite-reference score estimators for the OU-marginal score.  Score estimators induced by scalar blends of Tweedie and TSI score estimators can reduce variance, but they are too rigid for singular or strongly anisotropic targets.  We formulate blended score estimation as a conditional risk-minimization problem over matrix valued blending coefficients, referred to as gates.  Our central result is to show the optimal matrix valued gate for blended score estimation is given
\[
    \Gstar(y,t)
    =
    \alphat^2
    \left(
        \alphat^2 I_d
        +
        \gammat\,
        \E[H_0(X_0)\mid Y_t=y]
    \right)^{-1},
    \qquad
    H_0=-\nabla^2\log p_0 .
\]
Here $\alphat=e^{-t}$ and $\gammat=1-e^{-2t}$ are the OU coefficients, and the conditional expectation is under the OU posterior of $X_0$ given $Y_t=y$.  We call this formula the \emph{Laplace--Fisher Gate Identity} (\LFGI{}).  Because the Tweedie--TSI disagreement has conditional mean zero, the gate changes the score-estimator variance but not its expected value.  We derive the variance-optimal matrix gate, record the Gaussian special case, and establish finite-reference consistency and stability bounds for estimating the gate from weighted reference samples.

We then use the finite-reference LFGI score estimator for normalized density evaluation in Bayesian inverse problems.  In regimes where MCMC pilot samples and derivative information are already available, LFGI uses those byproducts to construct a normalized surrogate for the posterior density.  The resulting surrogate supplies information that the MCMC samples alone do not provide: posterior-energy evaluation, model-evidence estimation, and downstream density-based diagnostics.  On a PDE-constrained inverse-problem benchmark, the LFGI surrogate improves posterior-density calibration and sampling diagnostics relative to the other tested score-estimator classes.  Experiments using LFGI with known model evidence check absolute evidence calibration in both Gaussian and non-Gaussian settings.

\end{abstract}

\newpage
\noindent The table records notation used repeatedly across multiple sections.  Quantities used only inside a single construction are defined where they appear.
\begin{table}[H]
\centering
\label{tab:notation}
\begingroup
\footnotesize
\setlength{\tabcolsep}{4pt}
\renewcommand{\arraystretch}{0.92}
\begin{tabular}{@{}p{0.32\linewidth}p{0.64\linewidth}@{}}
\toprule
Notation & Meaning \\
\midrule
$d,I_d$ & ambient dimension and $d\times d$ identity matrix. \\
$X_0,Y_t\in\R^d$; $x,y$ & clean state, OU-corrupted state, and realized values. \\
$\alphat,\gammat$ & OU coefficients, $\alphat=e^{-t}$ and $\gammat=1-e^{-2t}$. \\
$\KOU(y\mid x),\rhoOU$ & OU transition density and OU posterior law $p_0(x\mid Y_t=y)$. \\
$p_0,p_t$ & target density and OU-marginal density. \\
$s_0(x),s_t(y)$ & clean score $\nabla_x\log p_0(x)$ and OU-marginal score $\nabla_y\log p_t(y)$. \\
$H_0(x),H(y,t)$ & observed information $-\nabla_x^2\log p_0(x)$ and its OU-posterior average $\E[H_0(X_0)\mid Y_t=y]$. \\
$\bank_N,\bank_{\rm s},\bank_{\rm g}$ & reference bank and independent score/gate banks, with sizes $N,\Ns,\Ng$. \\
$\widetilde w_i(y,t),w_i(y,t)$ & unnormalized and normalized OU weights attached to reference samples. \\
$b(x;y,t),c(x;t),\delta(x;y,t),\eb(x;y,t)$ & Tweedie signal, target-score signal, disagreement $\delta=c-b$, and Tweedie residual $\eb=b-s_t$. \\
$G(y,t),\Gstar(y,t),\Ghat(y,t)$ & matrix gate, population-optimal gate, and empirical gate. \\
$\Psiop(P)$ & Hessian-average gate map $\alphat^2(\alphat^2I_d+\gammat P)^{-1}$. \\
$\Cdd(y,t),\Cebd(y,t)$ & disagreement covariance $\E[\delta\delta^\top\mid Y_t=y]$ and residual-disagreement moment $\E[\eb\delta^\top\mid Y_t=y]$. \\
$\mathcal R(G;y,t),\Gainstar(y,t)$ & conditional trace risk and population-optimal risk reduction $\mathcal R(0;y,t)-\mathcal R(\Gstar;y,t)$. \\
$\tildep,Z,\qpf$ & unnormalized target density, normalizing constant, and probability-flow density. \\
\bottomrule
\end{tabular}
\endgroup
\end{table}

We first fix the notation and terminology used throughout the paper.  The expectation convention is as follows.  For a fixed query $(y,t)$, meaning a noisy state and diffusion time, $\rhoOU$ denotes the OU posterior law $p_0(\cdot\mid Y_t=y)$.  Expressions such as $\E[f(X_0)\mid Y_t=y]$, $\E[f\mid y,t]$, and $\Cov(f,g\mid y,t)$ are expectations and covariances under this law.  The moment matrices $\Cdd(y,t)$ and $\Cebd(y,t)$ are the corresponding conditional second and cross moments.  Expectations under any other law are subscripted, for example $\E_{p_0}$, $\E_{p_t}$, $\E_{\qpf}$, or $\E_q$; finite-reference averages are denoted by hats and OU weights rather than by $\E$.

We use \emph{population} for the exact object defined by these expectations before replacing them by a finite reference bank; for example, the population-optimal gate minimizes the exact conditional risk under $\rhoOU$.  We use a 'reference bank', or bank for short, denoted $\bank_N=\{X_i\}_{i=1}^N$, to mean a finite collection of reference samples, typically drawn independently from $p_0$, on which the OU weights are evaluated at the query $(y,t)$.  A finite-reference or empirical object replaces exact conditional expectations by weighted averages over such a bank.  In the experiments, score bank and gate bank denote independent reference banks used, respectively, for score-signal averaging and gate construction.

\section{Introduction}
\label{sec:introduction}

\subsection{Score estimation under noise}

Diffusion and score-based samplers generate samples by corrupting an initial target law and then integrating a reverse-time dynamics whose drift depends on the score of the corrupted marginal law \citep{anderson1982reverse,sohldickstein2015nonequilibrium,ho2020denoising,song2021score}.  The object of interest is a target distribution $p_0$ on $\R^d$, together with the Ornstein--Uhlenbeck forward process that maps a clean state $X_0\sim p_0$ to a noisy state $Y_t$.  The marginal of $Y_t$ is denoted by $p_t$, and the score needed by the reverse sampler is
\[
    s_t(y)=\nabla_y\log p_t(y).
\]
The estimator-level problem is to estimate $s_t(y)$ accurately at the states $y$ and noise levels visited by the reverse-time integrator.

The same OU-marginal score also supports the probability-flow density construction used later.  Given a score estimator $\hat{s}_t$, path integration defines the normalized probability-flow density $\qpf$ approximating the target density $p_0$. In the inverse-problem setting below, $\qpf$ is constructed from MCMC pilot byproducts and derivative information and is evaluated on anisotropic non-Gaussian posterior geometry; the probability-flow construction, evidence identities, and Gauss--Newton gates are defined in \cref{sec:pf-density-evaluation-body,sec:gn-gates-body}.

We focus on a setting in which the clean score $s_0(x)=\nabla_x\log p_0(x)$, and eventually its observed information $H_0(x)=-\nabla_x^2\log p_0(x)$, are available or can be evaluated to a desired accuracy.  This includes many scientific sampling problems, Bayesian inverse problems, and controlled synthetic regimes in which evaluating $s_0(x)$ is not the bottleneck \citep{robert2004montecarlo,stuart2010inverse}.  Even when $s_0$ and $H_0$ are available, the score of the corrupted marginal is not directly available.  It is a conditional expectation under the OU posterior law $p_0(x\mid Y_t=y)$.  The quality of a sampler therefore depends on the quality of a conditional score estimator.

\subsection{Two exact identities and one variance problem}

For the Ornstein--Uhlenbeck process, there are two exact conditional-expectation identities for the OU-marginal score $s_t(y)=\nabla_y\log p_t(y)$ at a query $(y,t)$: the Tweedie/denoising identity and the target-score identity \citep{efron2011tweedie,vincent2011connection,debortoli2024tsm}.  For a clean state $x$ and query $(y,t)$, define the Tweedie signal and target-score signal by
\[
    b(x;y,t):=\frac{\alphat x-y}{\gammat},
    \qquad
    c(x;t):=\frac{s_0(x)}{\alphat}.
\]
Then
\[
    s_t(y)=\E[b(X_0;y,t)\mid Y_t=y],
    \qquad
    s_t(y)=\E[c(X_0;t)\mid Y_t=y].
\]
The first is the Tweedie identity. It writes $s_t(y)$ as the conditional average of the OU denoising signal $b(X_0;y,t)=(\alphat X_0-y)/\gammat$.  The second is the target-score identity. It writes the same OU-marginal score $s_t(y)$ as the conditional average of the clean-score signal $c(X_0;t)=s_0(X_0)/\alphat$ under $p_0(x\mid Y_t=y)$.  Both are exact conditional-expectation identities, so neither identity is biased for $s_t(y)$ when the conditional expectation is evaluated exactly.

With the notation convention above, a finite-reference estimator computes the conditional expectations in these two identities by OU-weighted averages over a reference bank.  The difficulty is how to combine the two weighted averages at each query $(y,t)$ so as to minimize the finite-reference variance of the score estimate.  A self-normalized importance sampling (SNIS) estimator of the Tweedie identity can become poorly conditioned at small diffusion times because the residual is scaled by $\gammat^{-1}$ \citep{owen2013mc,liu2001combined,kong1994sequential}.  A target-score-identity estimator can be better conditioned at low noise, but it can carry large variance when the clean score is singular, highly anisotropic, or poorly aligned with the conditional posterior.  Thus the existence of two exact identities does not by itself solve the sampling problem.  It creates a local estimator-selection problem.  At a given state $y$ and time $t$, the goal is to combine these identities to minimize conditional mean-squared score error.

\subsection{Why scalar blending is insufficient}

The Tweedie and target-score identities can be combined before the weighted average over the reference bank is taken.  For a clean state $x$ and query $(y,t)$, write
\[
    \delta(x;y,t):=c(x;t)-b(x;y,t).
\]
Because the Tweedie signal $b$ and target-score signal $c$ have the same conditional mean, $\E[\delta(X_0;y,t)\mid Y_t=y]=0$.  A blended score signal is therefore
\[
    z_G(x;y,t)
    :=
    b(x;y,t)+G(y,t)\delta(x;y,t),
\]
where $G(y,t)$ is fixed after conditioning on the query $(Y_t=y,t)$.  Averaging $z_G(X_i;y,t)$ with the same OU weights used for the Tweedie and TSI estimators gives a score estimator with the same conditional mean as either identity.  The gate $G$ changes variance, not the target conditional mean.  The choices $G=0$ and $G=I_d$ recover the Tweedie and TSI signals.  A scalar blend restricts $G(y,t)=g(y,t)I_d$, so every direction receives the same fraction of the correction from Tweedie toward TSI.  We denote this restricted estimator by \BLEND{}.  Allowing a matrix gate gives \MBLEND{}, where the correction can depend on direction.

The singular Gaussian example shows why the scalar restriction is too restrictive.  If
\[
    p_0=\N(0,P^{-1}),
    \qquad
    P=\diag(\lambda_1,\ldots,\lambda_d),
\]
then the population-optimal matrix gate is diagonal in this basis and satisfies
\[
    e_j^\top \Gstar(y,t)e_j
    =
    \psi_t(\lambda_j)
    :=
    \frac{\alphat^2}{\alphat^2+\gammat\lambda_j}.
\]
Thus $\psi_t(\lambda_j)$ is the Gaussian attenuation applied to the TSI--Tweedie correction in coordinate $j$.  A scalar blend would force all diagonal entries to equal one number $g(y,t)$.  A scalar coefficient cannot match all entries when $P$ is ill-conditioned.  High-curvature directions with large $\lambda_j$ have small $\psi_t(\lambda_j)$ and stay closer to Tweedie, whereas low-curvature directions with small $\lambda_j$ have $\psi_t(\lambda_j)$ closer to one and move closer to TSI.  We therefore optimize over matrix gates $G(y,t)$.

The direction-dependent mismatch also matters for density evaluation.  A scalar-gated score field used in the probability-flow estimator can accumulate path-likelihood error along the probability-flow trajectory.  The mismatch between scalar attenuation and the eigenvalue-dependent filter $\psi_t(\lambda_j)$ can therefore produce large error in the normalized density and in evidence diagnostics built from that density.  See the Darcy-flow density benchmark in \cref{subsec:density-experiments}.

\subsection{Main contribution}

We formulate Tweedie--TSI blending as a conditional risk minimization problem for a matrix gate.  Here a gate is a measurable matrix coefficient $G(y,t)\in\R^{d\times d}$ multiplying the zero-mean disagreement signal between the Tweedie and TSI score signals.  Since this disagreement signal has conditional mean zero, every such gate leaves the conditional mean equal to $s_t(y)$, and the gate-selection problem is variance and risk control.  The optimal gate can be constructed in two equivalent ways, depending on which target information is available.  With target scores and reference samples, the direct moment construction expresses the gate through conditional moments of the score disagreement: the cross-covariance between the target-score signal and the disagreement signal, and the conditional covariance of the disagreement signal itself.  The finite estimator replaces those conditional covariances by self-normalized weighted averages.  When target Hessians are also available, OU Fisher identities rewrite the same second moments through the conditional average of the target Hessian, so the same gate can be obtained by estimating that Hessian average and applying the associated matrix-resolvent map.  The two constructions have different finite-reference errors: centered primal regression estimates a conditional covariance ratio, while LFGI estimates a conditional Hessian average before inversion.

The application is normalized density evaluation rather than merely another sampling benchmark.  In the inverse-problem regime where scores, Hessian or Gauss--Newton information, and posterior pilot samples are available, additional MCMC from the pilot bank is a natural sampling baseline.  LFGI instead uses those pilot byproducts to build a probability-flow score estimator that is held fixed after construction and has closed-form divergence, yielding an exactly normalized surrogate for the target posterior density.  Absolute evidence identities are exact only when this probability-flow surrogate equals the target density; away from that ideal case, evidence quantities built from the surrogate are calibration diagnostics.  The Darcy-flow density benchmark in \cref{subsec:density-experiments} reports posterior-energy calibration (\cref{app:density-metrics}, Def.~\ref{def:central-density}) and effective-sample-size overlap diagnostics (Def.~\ref{def:evidence-diagnostics}), while the analytic Gaussian, shared-geometry mixture, and misaligned-curvature mixture calibration targets in \cref{subsec:known-z-calibration} give known-normalization evidence checks.

\section{Related Work}
\label{sec:related-work}

Diffusion and score-based generative models sample by reversing a noising process whose drift depends on a time-dependent score \citep{sohldickstein2015nonequilibrium,ho2020denoising,song2021score}.  For unnormalized targets, recent diffusion-path samplers estimate intermediate scores by Monte Carlo, inner MCMC, rejection/reference procedures, or auxiliary-particle schemes rather than by fully amortized score training \citep{huang2024rdmc,grenioux2024slips,he2024zeroth,noble2025learned,delmoral2006smc,wu2025rdsmc,young2026dpsmc}.  We use those sampler-level frameworks as context and isolate the local OU-posterior estimator-design problem. We then ask which matrix gate applied to the two exact score identities minimizes conditional trace risk.

The identities blended here are the Tweedie/denoising identity \citep{efron2011tweedie,vincent2011connection} and the target-score identity \citep{debortoli2024tsm}; they induce the two finite-reference score estimators of the OU-marginal score that scalar or matrix schedules later blend. Prior and concurrent work combines the resulting score estimators through scalar, diagonal, or matrix schedules estimated globally or along path marginals \citep{debortoli2024tsm,he2025reversekl,cordero2025sampling,kahouli2025cvsm,ko2025latent,young2026dpsmc}.  Our contribution is the corresponding pointwise analysis of matrix gates.  For each $(y,t)$, the zero-mean Tweedie--TSI disagreement is an OU-posterior-score control variate.  The risk-optimal gate can be expressed through conditional moments of this score disagreement; Fisher--Stein identities then rewrite those same moments through the conditional Hessian average $H(y,t)$, yielding an equivalent curvature-based matrix-resolvent gate.  Although second-order information is classical in Laplace approximations and Hessian-informed MCMC \citep{tierney1986laplace,robert2004montecarlo,girolami2011riemann,Bui-ThanhGirolami14,MartinWilcoxBursteddeEtAl12}, here curvature is not a replacement target density; it is the analytic representation of the risk-optimal gate between identities with the same conditional mean.

A parallel line learns time-dependent scores, drifts, or path controls for unnormalized sampling \citep{akhound2024idem,phillips2024pdds,vargas2024transport,richter2024improved,chen2025sequential,zhang2022pathintegral,havens2025adjoint}.  The estimator studied here is non-amortized and reference-based: the bank approximates OU conditional expectations of known target-side quantities rather than fitting a global score, kernel field, or transport map \citep{hyvarinen2005score,sriperumbudur2017infinite,arbel2018kcef,wenliang2019deepkef,zhou2020nonparametricscore,liu2016svgd,liu2016ksd,chwialkowski2016kernelgof,korba2021ksddescent,arbel2019mmdflow}.  The density-evaluation application connects to continuous normalizing flows, probability-flow likelihoods, and Bayesian evidence estimators such as bridge sampling, annealed importance sampling (AIS), thermodynamic integration, nested sampling, and transport/flow surrogates \citep{chen2018neuralode,grathwohl2019ffjord,song2021score,song2021maximum,meng1996bridge,neal2001annealed,gelman1998normalizing,skilling2006nested}.  LFGI uses target derivatives and Hessian or Gauss--Newton information evaluated at target reference samples to construct a self-normalized probability-flow surrogate for the target density.

\section{Theory: From Exact Identities to Optimal Matrix Gates}

\label{sec:theory}

The presentation follows the score-based sampling viewpoint: a forward diffusion defines OU marginals $p_t$, and a reverse sampler requires the score $s_t=\nabla\log p_t$ \citep{anderson1982reverse,song2021score}.  The only estimator-level question is how to compute this score accurately from a reference bank and evaluations of $s_0$ and $H_0$.

\subsection{Score-based sampling with the Ornstein--Uhlenbeck process}
\label{subsec:ou-process}

The Ornstein--Uhlenbeck process provides the forward corruption used throughout the analysis.  The transition kernel is Gaussian.  The marginal of $Y_t$ is an explicit Gaussian convolution of $p_0$, and the posterior over clean states given a noisy query point is available up to normalization.

\begin{definition}[OU forward process]
\label[definition]{def:ou-process}
Let $X_0\sim p_0$ and let $\xi\sim\N(0,I_d)$ be independent.  Define
\begin{equation*}
    Y_t=\alphat X_0+\sqrt{\gammat}\,\xi,
    \qquad
    \alphat=e^{-t},
    \qquad
    \gammat=1-e^{-2t}.
\end{equation*}
The transition density is
\begin{equation*}
    \KOU(y\mid x)
    =
    (2\pi\gammat)^{-d/2}
    \exp\left(
        -\frac{\nor{y-\alphat x}^2}{2\gammat}
    \right).
\end{equation*}
The marginal of $Y_t$ and its score are
\begin{equation}
\label{eq:pt-score}
    p_t(y)=\int \KOU(y\mid x)p_0(x)\,\dd x,
    \qquad
    s_t(y)=\nabla_y\log p_t(y).
\end{equation}
\end{definition}

Equivalently, $Y_t$ solves the forward OU SDE
\begin{equation*}
    \dd Y_t=-Y_t\,\dd t+\sqrt{2}\,\dd W_t,
    \qquad
    Y_0\sim p_0.
\end{equation*}
The reverse-time sampler associated with this forward process requires $s_t(y)$ along its trajectory \citep{anderson1982reverse,song2021score}.  Written for integration from $\tmax$ down to $\tmin$, the exact reverse SDE and its probability-flow ODE are
\begin{align}
\label{eq:ou-reverse-sde}
    \dd \bar Y_t
    &=
    \{-\bar Y_t-2s_t(\bar Y_t)\}\,\dd t
    +\sqrt{2}\,\dd \bar W_t,\\
\frac{\dd \bar y_t}{\dd t}
    &=
    -\bar y_t-s_t(\bar y_t).\notag
\end{align}
Thus samples are obtained by replacing $s_t$ by an estimator $\widehat s(\cdot,t)$ in \cref{eq:ou-reverse-sde} and discretizing the resulting reverse dynamics.  Density evaluation uses the same estimated score in the inverse probability-flow path: if $z_{\tmin}=x$ and $\dot z_t=-z_t-\widehat s(z_t,t)$, then
\begin{equation}
\label{eq:pf-density-intro}
    \log \qpf(x)
    =
    \log\phibase(z_{\tmax})
    -
    \int_{\tmin}^{\tmax}
    \{d+\nabla\!\cdot\widehat s(z_t,t)\}\,\dd t.
\end{equation}
The estimator-level objective in the rest of the paper is to estimate $s_t$ accurately enough that these reverse-SDE and probability-flow substitutions give useful samples and density values.  The formula for $p_t$ in \cref{eq:pt-score} is usually not tractable enough to differentiate in $y$ directly, so we work with conditional expectations under the forward posterior.

\begin{definition}[OU posterior]
\label[definition]{def:ou-posterior}
For fixed $(y,t)$ with $p_t(y)>0$, define
\begin{equation}
\label{eq:rho-ou}
    \rhoOU(x)
    =
    p_0(x\mid Y_t=y)
    =
    \frac{\KOU(y\mid x)p_0(x)}{p_t(y)}.
\end{equation}
\end{definition}

The posterior $\rhoOU$ is the conditional density $p_0(x\mid Y_t=y)$ with respect to which all moments such as $\E[f(X_0)\mid Y_t=y]$ are taken.  In finite-reference implementations, such a moment is approximated by normalized weights proportional to $\KOU(y\mid X_i)$ applied to reference samples $X_i\sim p_0$ \citep{owen2013mc,liu2001combined,kong1994sequential}.

\begin{assumption}[Regularity for the OU identities and LFGI theorem]
\label[assumption]{ass:regularity}
The target density $p_0$ is positive and twice continuously differentiable on its support.  The conditional law $\rhoOU$ has sufficient decay for the integration-by-parts steps used in the target-score identity \eqref{eq:tsi-identity} and in \cref{lem:Cebd-universal,lem:fisher-H}.  All conditional moments in \eqref{eq:tweedie-identity}, \eqref{eq:tsi-identity}, and \cref{lem:Cebd-universal,lem:fisher-H,thm:lfgi} are finite.  Whenever one of those results uses an inverse, the relevant matrix is assumed nonsingular; otherwise the corresponding Moore--Penrose statement is used.
\end{assumption}

\subsection{The Tweedie and target-score identities}
\label{subsec:twd-tsi}

The two identities used in this paper are best viewed as two score signals whose conditional mean is the same OU-marginal score $s_t(y)$.  The first signal depends only on the OU residual $y-\alphat x$.  The second signal depends on the target score $s_0(x)$.

\begin{definition}[Tweedie and TSI score signals]
\label[definition]{def:bcd}
Define
\begin{equation*}
\begin{aligned}
    b(x;y,t)
    &:=
    -\frac{y-\alphat x}{\gammat}
    =
    \frac{\alphat x-y}{\gammat},
    \\
    c(x;t)
    &:=
    \frac{s_0(x)}{\alphat},
    \qquad
    s_0(x)=\nabla_x\log p_0(x),
    \\
    \delta(x;y,t)
    &:=
    c(x;t)-b(x;y,t).
\end{aligned}
\end{equation*}
We call $b$ the Tweedie signal, $c$ the target-score (TSI) signal, and $\delta$ the disagreement signal.
\end{definition}

The first named identity is the \emph{Tweedie identity}, also called the denoising score identity for the OU corruption \citep{efron2011tweedie,vincent2011connection}:
\begin{equation}
\label{eq:tweedie-identity}
    s_t(y)
    =
    \E[b(X_0;y,t)\mid Y_t=y].
\end{equation}
It expresses the time-$t$ score as the posterior average of the Tweedie signal $b(X_0;y,t)=(\alphat X_0-y)/\gammat$.

The second named identity is the \emph{target-score identity} \citep{debortoli2024tsm}:
\begin{equation}
\label{eq:tsi-identity}
    s_t(y)
    =
    \E[c(X_0;t)\mid Y_t=y]
    =
    \frac{1}{\alphat}
    \E[s_0(X_0)\mid Y_t=y].
\end{equation}
It averages the clean-score signal $c(X_0;t)=s_0(X_0)/\alphat$ under the OU posterior $p_0(x\mid Y_t=y)$.  It is an identity for the same quantity as Tweedie, namely the OU-marginal score $s_t(y)$, but the finite-reference estimators average different score signals: $c(X_0;t)$ for TSI and $b(X_0;y,t)$ for Tweedie.

Subtracting \eqref{eq:tweedie-identity} from \eqref{eq:tsi-identity} gives the zero-mean disagreement identity
\begin{equation}
\label{eq:delta-zero-mean}
    \E[\delta(X_0;y,t)\mid Y_t=y]=0.
\end{equation}
This zero-mean property is the algebraic reason that blending can reduce variance without changing the target conditional mean, as in standard control-variate constructions \citep{owen2013mc,robert2004montecarlo}.  Any conditional multiple of $\delta$ is a valid control variate.

\subsection{Posterior-score control variate structure}

The disagreement signal satisfies the posterior-score relation displayed in \cref{lem:posterior-score-disagreement}: $\delta(x;y,t)=\alphat^{-1}\nabla_x\log p_0(x\mid Y_t=y)$.

\begin{lemma}[The disagreement is the OU posterior score]
\label[lemma]{lem:posterior-score-disagreement}
Let
\[
    q(x;y,t):=\nabla_x\log\rhoOU(x).
\]
Then
\begin{equation*}
    q(x;y,t)=\alphat\delta(x;y,t).
\end{equation*}
Equivalently,
\[
    \delta(x;y,t)
    =
    \frac{1}{\alphat}
    \nabla_x\log p_0(x\mid Y_t=y).
\]
\end{lemma}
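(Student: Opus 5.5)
The plan is a direct computation of $\nabla_x\log\rhoOU(x)$ from the Bayes formula \eqref{eq:rho-ou}, followed by a comparison with \cref{def:bcd}. Fix $(y,t)$ with $p_t(y)>0$ and restrict to $x$ in the support of $p_0$, where by \cref{ass:regularity} $p_0$ is positive and $C^2$, so the logarithm is differentiable. Taking logarithms in \eqref{eq:rho-ou} gives
\[
    \log\rhoOU(x)
    =
    \log\KOU(y\mid x)+\log p_0(x)-\log p_t(y).
\]
The last term does not depend on $x$, so its gradient vanishes.

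Next I differentiate the OU kernel term. From \cref{def:ou-process},
\[
    \log\KOU(y\mid x)=-\frac{d}{2}\log(2\pi\gammat)-\frac{\nor{y-\alphat x}^2}{2\gammat}.
\]
Hence
\[
    \nabla_x\log\KOU(y\mid x)=\frac{\alphat(y-\alphat x)}{\gammat}=-\alphat\,b(x;y,t).
\]
The only point to check is the sign and the chain-rule factor $\alphat$ coming from differentiating $y-\alphat x$ in $x$. Combining the two terms gives
\[
    q(x;y,t)=s_0(x)-\alphat b(x;y,t)=\alphat\left(\frac{s_0(x)}{\alphat}-b(x;y,t)\right)=\alphat\bigl(c(x;t)-b(x;y,t)\bigr)=\alphat\delta(x;y,t),
\]
which is the claim. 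The equivalent form follows by dividing by $\alphat=e^{-t}>0$.

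No step is a genuine obstacle. The only care needed is to note that the normalizer $p_t(y)$ is constant in $x$, and that the identity holds pointwise on the support of $p_0$, where $\rhoOU$ is positive. No integration by parts or decay condition is required here; those enter only when this lemma is combined with Stein's identity to recover \eqref{eq:delta-zero-mean}.
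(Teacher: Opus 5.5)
Your proposal is correct and follows the same route as the paper: take the logarithm of the Bayes formula \eqref{eq:rho-ou}, drop the $x$-independent terms $\log p_t(y)$ and $-\frac d2\log(2\pi\gammat)$, and differentiate to get $s_0(x)+\alphat(y-\alphat x)/\gammat=s_0(x)-\alphat b=\alphat\delta$. Your sign and chain-rule check matches the paper's computation exactly.
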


Proof deferred to \cref{app:proof-lem-posterior-score-disagreement}.

The preceding lemma gives the control-variate interpretation directly.  The disagreement $\delta=c-b$ has conditional mean zero and is proportional to the OU posterior score.  Thus adding $G\delta$ to the Tweedie signal is a matrix control variate.   Because $\delta=\alphat^{-1}\nabla_x\log p_0(x\mid Y_t=y)$, its covariance $\Cdd=\E[\delta\delta^\top\mid Y_t=y]$ is the OU-posterior Fisher information up to the factor $\alphat^{-2}$ \citep{stein1972bound,vaart1998asymptotic}.

\subsection{Matrix gates}

We now replace the scalar coefficient $g(y,t)$ by a matrix $G(y,t)$ acting on the disagreement signal $\delta(x;y,t)$.  This is the matrix version of scalar estimator blending and control-variate score scheduling \citep{debortoli2024tsm,kahouli2025cvsm,young2026dpsmc}.

\begin{definition}[Gated score signal]
\label[definition]{def:gated-score}
For a measurable matrix gate $G(y,t)\in\R^{d\times d}$, define
\begin{equation*}
    z_G(x;y,t)
    =
    b(x;y,t)+G(y,t)\delta(x;y,t).
\end{equation*}
\end{definition}

The conditional-mean identity is immediate from the zero conditional mean of the disagreement.  For every measurable matrix $G(y,t)$ that is fixed after conditioning on $(Y_t=y,t)$,
\begin{equation}
\label{eq:measurable-gate-target}
    \E[z_G(X_0;y,t)\mid Y_t=y]
    =
    s_t(y).
\end{equation}
This follows from the two identities above because $\E[\delta(X_0;y,t)\mid Y_t=y]=0$.  Thus the choice of $G$ changes the conditional risk, not the conditional mean.

\section{The Matrix-Gate Risk Problem}
\label{sec:operator-risk}

With the conditional mean fixed, the gate is chosen by local variance control.  At fixed $(y,t)$, choose the matrix gate by minimizing the conditional trace risk $R(G;y,t)$; the first-order condition is the normal equation $G\Cdd+\Cebd=0$.  All quantities are spatially varying in $(y,t)$, so the optimal gate can adapt simultaneously to position, noise level, and anisotropic posterior geometry.

\subsection{Conditional risk and normal equations}

\begin{definition}[Centered Tweedie residual]
\label[definition]{def:eb}
Define
\begin{equation*}
    \eb(x;y,t)=b(x;y,t)-s_t(y).
\end{equation*}
\end{definition}

The residual $\eb$ is the deviation of the Tweedie signal from its conditional mean.  By \eqref{eq:delta-zero-mean}, $\delta$ has zero conditional mean, so the gate chooses a control variate that cancels as much of $\eb$ as possible.

\begin{definition}[Local conditional trace risk]
\label[definition]{def:local-risk}
At fixed $(y,t)$, with expectation taken over $X_0\sim\rhoOU$, define
\begin{equation*}
    \mathcal R(G;y,t)
    =
    \E\left[
        \nor{\eb(X_0;y,t)+G(y,t)\delta(X_0;y,t)}^2
        \mid Y_t=y
    \right].
\end{equation*}
Define the conditional moment matrices
\begin{equation*}
    \Cdd(y,t)=\E[\delta\delta^\top\mid Y_t=y],
    \qquad
    \Cebd(y,t)=\E[\eb\delta^\top\mid Y_t=y].
\end{equation*}
\end{definition}

The matrix $\Cdd$ measures the conditional second moment of the available control variate, while $\Cebd$ measures its cross moment with the centered Tweedie residual $\eb$.  These two matrices define the ordinary least-squares normal equation for the best matrix coefficient.

\begin{proposition}[Matrix-gate normal equation]
\label[proposition]{prop:normal-equation}
At fixed $(y,t)$, the unconstrained minimizer
\[
    \Gstar(y,t)\in\argmin_{G\in\R^{d\times d}}\mathcal R(G;y,t)
\]
satisfies
\begin{equation}
\label{eq:normal-equation}
    \Gstar \Cdd+\Cebd=0.
\end{equation}
If $\Cdd$ is nonsingular, then
\begin{equation*}
    \Gstar=-\Cebd \Cdd^{-1}.
\end{equation*}
\end{proposition}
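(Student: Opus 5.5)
The plan is to treat $\mathcal R(\cdot;y,t)$ as a quadratic function of the matrix variable $G\in\R^{d\times d}$ at fixed $(y,t)$. Using the finite second moments from \cref{ass:regularity}, expand the squared norm as a trace:
\[
    \mathcal R(G;y,t)
    =
    \E[\nor{\eb}^2\mid Y_t=y]
    +2\,\tr\!\left(G\,\Cebd^\top\right)
    +\tr\!\left(G\,\Cdd\,G^\top\right),
\]
where we use $\E[\delta^\top G^\top\eb\mid Y_t=y]=\tr(G\,\E[\delta\eb^\top\mid Y_t=y])=\tr(G\Cebd^\top)$ and $\E[\delta^\top G^\top G\delta\mid Y_t=y]=\tr(G\Cdd G^\top)$. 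This uses only that $G$ is fixed after conditioning, so it can be pulled out of the conditional expectation.

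Next, compute the Fréchet derivative with respect to $G$ in the Frobenius inner product $\langle A,B\rangle=\tr(AB^\top)$. For a perturbation $E$, the change is
\[
    \mathcal R(G+E)-\mathcal R(G)=2\,\tr\!\left(E\,(G\Cdd+\Cebd)^\top\right)+\tr(E\Cdd E^\top).
\]
Here we use that $\Cdd$ is symmetric. The gradient is therefore $2(G\Cdd+\Cebd)$.

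Since $\Cdd\succeq 0$, the quadratic term satisfies $\tr(E\Cdd E^\top)=\sum_k e_k^\top\Cdd e_k\ge0$, where $e_k$ ranges over the rows of $E$. Hence $\mathcal R$ is convex in $G$. Any minimizer must then satisfy the first-order condition $\Gstar\Cdd+\Cebd=0$, which is \eqref{eq:normal-equation}. Conversely, any solution of \eqref{eq:normal-equation} is a global minimizer: the expansion gives $\mathcal R(\Gstar+E)-\mathcal R(\Gstar)=\tr(E\Cdd E^\top)\ge0$.

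The step needing the most care is existence of a minimizer when $\Cdd$ is singular, since the statement takes $\Gstar$ from the argmin. Convexity alone does not guarantee that the minimum is attained, so we check that the normal equation is always solvable. It suffices to show that the row space of $\Cebd$ lies in the range of $\Cdd$. Suppose $v\in\ker\Cdd$. Then $\E[(v^\top\delta)^2\mid Y_t=y]=0$, so $v^\top\delta=0$ almost surely under $\rhoOU$. Consequently $\Cebd v=\E[\eb\,\delta^\top v\mid Y_t=y]=0$. This shows $\ker\Cdd\subseteq\ker\Cebd$, so $G=-\Cebd\Cdd^{+}$ solves the normal equation, consistent with the Moore--Penrose convention of \cref{ass:regularity}.

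Finally, if $\Cdd$ is nonsingular, we right-multiply \eqref{eq:normal-equation} by $\Cdd^{-1}$ to obtain $\Gstar=-\Cebd\Cdd^{-1}$. In this case $\Cdd\succ0$, so the quadratic term is strictly positive for $E\neq0$ and this minimizer is unique.
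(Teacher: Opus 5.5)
Your proposal is correct and follows the same route as the paper's proof. Both expand $\mathcal R(G;y,t)$ as $\E\nor{\eb}^2+2\tr(G\Cebd^\top)+\tr(G\Cdd G^\top)$, read off the gradient $2(G\Cdd+\Cebd)$, and invert when $\Cdd$ is nonsingular; your extra steps (global optimality of any normal-equation solution via $\tr(E\Cdd E^\top)\ge 0$, and solvability in the singular case via $\ker\Cdd\subseteq\ker\Cebd$) are sound refinements that the paper leaves implicit.
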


Proof deferred to \cref{app:proof-prop-normal-equation}.

\begin{definition}[Normal residual]
\label[definition]{def:normal-residual}
Define
\begin{equation*}
    \OmegaG(G;y,t)
    :=
    \E[(\eb+G\delta)\delta^\top\mid Y_t=y]
    =
    G\Cdd+\Cebd.
\end{equation*}
\end{definition}

The normal residual $\Omega_G$ in \cref{def:normal-residual} measures the cross moment between the remaining gated score residual and the disagreement signal.  It is zero exactly when the remaining gated residual is orthogonal to every direction in the conditional span of $\delta$, equivalently every direction with nonzero contribution under $\Cdd$.

\begin{proposition}[Residual orthogonality and excess risk]
\label[proposition]{prop:residual-risk}
For any solution $\Gstar$ of \cref{eq:normal-equation},
\begin{equation*}
    \OmegaG(\Gstar;y,t)=0.
\end{equation*}
Moreover, for any matrix $G$,
\begin{equation}
\label{eq:excess-risk}
    \mathcal R(G;y,t)-\mathcal R(\Gstar;y,t)
    =
    \tr\left(
        (G-\Gstar)\Cdd(G-\Gstar)^\top
    \right).
\end{equation}
If $\Cdd$ is nonsingular, then
\begin{equation*}
    \mathcal R(G;y,t)-\mathcal R(\Gstar;y,t)
    =
    \tr\left(
        \OmegaG(G;y,t)\Cdd^{-1}\OmegaG(G;y,t)^\top
    \right).
\end{equation*}
\end{proposition}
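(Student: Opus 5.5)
The plan is to expand the conditional trace risk as a quadratic function of $G$ and complete the square around $\Gstar$. Linearity of trace and conditional expectation gives, for any fixed $G$,
\[
    \mathcal R(G;y,t)
    =
    \E[\nor{\eb}^2\mid Y_t=y]
    +2\,\tr\left(G\Cebd^\top\right)
    +\tr\left(G\Cdd G^\top\right).
\]
Here I use $\nor{\eb+G\delta}^2=\tr((\eb+G\delta)(\eb+G\delta)^\top)$ together with the definitions of $\Cdd$ and $\Cebd$ in \cref{def:local-risk}. Finiteness of these moments is supplied by \cref{ass:regularity}.

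The orthogonality claim is immediate. By \cref{def:normal-residual}, $\OmegaG(\Gstar;y,t)=\Gstar\Cdd+\Cebd$, and this vanishes by \cref{eq:normal-equation} from \cref{prop:normal-equation}. The only input is that $\Gstar$ solves the normal equation, not that it is a minimizer, so this covers any solution, including a pseudo-inverse one when $\Cdd$ is singular.

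For \cref{eq:excess-risk}, I write $G=\Gstar+D$ and subtract the two expansions. The terms that survive are
\[
    2\,\tr(D\Cebd^\top)+2\,\tr(D\Cdd\Gstar^\top)+\tr(D\Cdd D^\top).
\]
Since $\Cdd$ is symmetric, $\tr(D\Cdd\Gstar^\top)=\tr(D(\Gstar\Cdd)^\top)$. The two cross terms therefore combine into $2\,\tr(D\,\OmegaG(\Gstar;y,t)^\top)$, which is zero by the previous step, and what remains is $\tr(D\Cdd D^\top)$.

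When $\Cdd$ is nonsingular, I use the identity $\OmegaG(G)=G\Cdd+\Cebd=(G-\Gstar)\Cdd+\OmegaG(\Gstar)=(G-\Gstar)\Cdd$. This gives $G-\Gstar=\OmegaG(G)\Cdd^{-1}$. Substituting into $\tr((G-\Gstar)\Cdd(G-\Gstar)^\top)$ yields $\tr(\OmegaG\Cdd^{-1}\Cdd\Cdd^{-1}\OmegaG^\top)=\tr(\OmegaG\Cdd^{-1}\OmegaG^\top)$, again using symmetry of $\Cdd^{-1}$.

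None of these steps is a real obstacle. The one place needing care is the bookkeeping of transposes in the cross terms, in particular confirming that the combination is exactly $\OmegaG(\Gstar)^\top$ and not its transpose. That identification relies only on the symmetry of $\Cdd$; $\Cebd$ need not be symmetric.
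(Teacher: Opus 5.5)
Your proof is correct and essentially matches the paper's. The only difference is where the square is completed. The paper works at the random-variable level, writing $\eb+G\delta=(\eb+\Gstar\delta)+(G-\Gstar)\delta$ and killing the cross term through $\E[(\eb+\Gstar\delta)\delta^\top\mid Y_t=y]=\Cebd+\Gstar\Cdd=0$, whereas you complete the square on the expanded quadratic in $G$. Both arguments rest on the normal equation, and both handle the nonsingular case with the identity $\OmegaG(G)=(G-\Gstar)\Cdd$.
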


Proof deferred to \cref{app:proof-prop-residual-risk}.

By \cref{eq:excess-risk}, a gate perturbation accrues excess score risk through the norm weighted by the disagreement covariance $\Cdd$.  Gate error matters only through $\nor{(G-\Gstar)\Cdd^{1/2}}_F^2$; eigendirections of $\Cdd$ with negligible weight in this norm do not need to be learned accurately.

\begin{definition}[Population-optimal risk reduction]
\label[definition]{def:population-risk-reduction}
At fixed $(y,t)$, \emph{population} means that expectations are taken under the full conditional law $\rhoOU=p_0(\cdot\mid Y_t=y)$, rather than over a finite reference bank.  Define the population-optimal risk reduction over the no-gate Tweedie baseline by
\begin{equation*}
    \Gainstar(y,t)
    :=
    \mathcal R(0;y,t)-\mathcal R(\Gstar;y,t).
\end{equation*}
For an implementable gate $\widehat G$, define its plug-in excess and captured population-optimal fraction by
\begin{equation*}
    \mathcal E(\widehat G;y,t)
    :=
    \mathcal R(\widehat G;y,t)-\mathcal R(\Gstar;y,t),
    \qquad
    \mathrm{cap}(\widehat G;y,t)
    :=
    1-\frac{\mathcal E(\widehat G;y,t)}{\Gainstar(y,t)}
\end{equation*}
whenever $\Gainstar(y,t)>0$.
\end{definition}

\Cref{eq:excess-risk} shows that the finite-reference criterion is the excess score risk $\mathcal E(\widehat G;y,t)$, or equivalently the population-risk-reduction condition $\mathcal E(\widehat G;y,t)\le \eta\,\Gainstar(y,t)$, not ordinary matrix-norm closeness to $\Gstar$.  The exact excess risk is
\[
    \mathcal E(\widehat G;y,t)
    =
    \nor{(\widehat G-\Gstar)\Cdd(y,t)^{1/2}}_F^2.
\]
Thus an estimated gate is useful precisely when its error is small in the eigendirections weighted by $\Cdd$ in the excess-risk norm.  Equivalently, $\widehat G$ captures at least a $(1-\eta)$ fraction of the population-optimal risk reduction if
\[
    \mathcal E(\widehat G;y,t)
    \le
    \eta\,\Gainstar(y,t).
\]
The finite-reference bounds below therefore control the same quantity: the excess score risk $\mathcal E(\widehat G;y,t)$ in \cref{eq:excess-risk}.

\subsection{Scalar, diagonal, and full matrix classes}

The admissible gate class determines how much anisotropic structure can be expressed.  A scalar gate gives one global interpolation coefficient.  A diagonal gate gives coordinate-wise interpolation in a fixed basis.  A full matrix gate can rotate and scale the disagreement signal in any direction.

\begin{definition}[Gate classes]
\label[definition]{def:gate-classes}
Define
\[
    \mathcal G_{\rm sc}:=\{gI_d:g\in\R\},
    \qquad
    \mathcal G_{\rm diag}:=\{G:G \text{ diagonal in a fixed basis}\},
    \qquad
    \mathcal G_{\rm op}:=\R^{d\times d}.
\]
\end{definition}

For these nested gate classes, the corresponding normal equations reduce to familiar scalar and coordinatewise formulas.  If $\tr \Cdd>0$, the optimal scalar gate $G=g_\star I_d$ has
\[
    g_\star=-\frac{\tr \Cebd}{\tr \Cdd}.
\]
In a fixed coordinate basis, if $(\Cdd)_{jj}>0$, the optimal diagonal gate $G=\diag(g_1,\ldots,g_d)$ satisfies
\[
    g_j=-\frac{(\Cebd)_{jj}}{(\Cdd)_{jj}}.
\]
Because the classes are nested, the best full matrix gate can only improve the conditional trace risk relative to the best diagonal gate, and the best diagonal gate can only improve it relative to the best scalar gate:
\[
    \inf_{G\in\mathcal G_{\rm op}}\mathcal R(G)
    \le
    \inf_{G\in\mathcal G_{\rm diag}}\mathcal R(G)
    \le
    \inf_{G\in\mathcal G_{\rm sc}}\mathcal R(G).
\]
The Gaussian case gives the simplest example showing that this extra expressivity affects the risk minimizer.

\subsection{Spatially uniform control-variate gates}
\label{subsec:spatially-uniform-gates}

The risk problem above is pointwise in $(y,t)$, so its population minimizer may be a spatially varying matrix gate $G_\star(y,t)$.  Several existing control-variate score estimators instead use spatially uniform schedules: the blending coefficient depends on diffusion time, but not on the query location.  We include these schedules as external baselines for the experiments below, using the scalar spatially uniform control-variate schedule from control-variate score matching \citep{kahouli2025cvsm} and the DPSMC spatially uniform matrix control-variate schedule \citep[Sec.~3.4, Props.~2--3]{young2026dpsmc}, both written here in the target-score-identity-to-Tweedie convention.  The formulas in this subsection are the expected-variance control-variate schedules derived in those references, specialized to the OU notation of \cref{sec:theory} and converted to the manuscript's Tweedie--TSI gate convention.

Let
\[
    I_\pi := \E_{p_0}[s_0(X)s_0(X)^\top]
\]
be the target-score second moment.  In the convention where the estimator is written as a correction from the target-score identity toward Tweedie, the spatially uniform scalar schedule is
\begin{equation}
\label{eq:s-unif-sc}
\begin{aligned}
    a_{\rm sc}(t)
    &=
    \frac{\gammat\operatorname{Tr}(I_\pi)}{\alphat^2 d+\gammat\operatorname{Tr}(I_\pi)},\\
    s_{\rm unif\text{-}sc}(y,t)
    &=s_{\TSI}(y,t)+a_{\rm sc}(t)\{s_{\TWD}(y,t)-s_{\TSI}(y,t)\}.
\end{aligned}
\end{equation}
The corresponding spatially uniform matrix schedule is
\begin{equation}
\label{eq:s-unif-mat}
\begin{aligned}
    A_{\rm mat}(t)
    &=
    \gammat I_\pi(\alphat^2 I_d+\gammat I_\pi)^{-1},\\
    s_{\rm unif\text{-}mat}(y,t)
    &=s_{\TSI}(y,t)+A_{\rm mat}(t)\{s_{\TWD}(y,t)-s_{\TSI}(y,t)\}.
\end{aligned}
\end{equation}
This convention is the reverse of the spatially varying gate notation $z_G=b+G(c-b)$ used above: the scalar schedule corresponds to $G=(1-a_{\rm sc})I_d$, and the matrix schedule corresponds to $G=I_d-A_{\rm mat}$.  Thus the formulas are algebraically the same control variates, expressed as corrections from TSI toward Tweedie rather than from Tweedie toward TSI.
The finite-reference versions replace $I_\pi$ by the corresponding empirical target-score second moment on the gate bank.  These gates are statistically easy spatially uniform moment-estimation problems.  They can capture spatially uniform anisotropy, but they cannot represent spatially varying or mode-dependent curvature.  The spatially varying LFGI gate replaces the spatially uniform moment $I_\pi$ by the conditional observed-information average $H(y,t)$ inside the resolvent map.  Thus the uniform baselines separate the benefit of matrix-valued blending from the additional benefit of spatially varying Hessian-informed gating.

\subsection{Why matrix gates are needed: exact Gaussian calculation}
\label{subsec:gaussian-gate-calculation}

The Gaussian case reveals why scalar blends become problematic when the target density has anisotropic curvature.  It removes nonlinear target effects and leaves only the spectral structure of the optimal gate.  If even this model requires direction-dependent attenuation, then scalar blending cannot be the risk-minimizing solution within the scalar class.

\begin{definition}[Singular Gaussian family]
\label[definition]{def:singular-gaussian}
Let
\[
    p_0^\varepsilon=\N(0,P_\varepsilon^{-1}),
    \qquad
    P_\varepsilon=U\diag(\lambda_1^\varepsilon,\ldots,\lambda_d^\varepsilon)U^\top.
\]
A singular limit is a family with
\[
    \kappa(P_\varepsilon)
    =
    \frac{\lambda_{\max}(P_\varepsilon)}
         {\lambda_{\min}(P_\varepsilon)}
    \to\infty
\]

or with at least one $\lambda_j^\varepsilon\to\infty$.
\end{definition}

\begin{proposition}[Gaussian optimal gate]
\label[proposition]{prop:gaussian-gate}
For $p_0=\N(m,P^{-1})$ with $P\succ0$, the optimal full matrix gate is independent of $y$ and equals
\begin{equation*}
    \Gstar(t)
    =
    \alphat^2(\alphat^2 I_d+\gammat P)^{-1}.
\end{equation*}
If $Pu_j=\lambda_j u_j$, then
\begin{equation*}
    \Gstar(t)u_j
    =
    \psi_t(\lambda_j)u_j,
    \qquad
    \psi_t(\lambda)=
    \frac{\alphat^2}{\alphat^2+\gammat\lambda}
    =
    \frac{1}{1+\lambdat\lambda},
    \qquad
    \lambdat:=\frac{\gammat}{\alphat^2}.
\end{equation*}
\end{proposition}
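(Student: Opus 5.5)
The plan is to compute the two moment matrices $\Cdd$ and $\Cebd$ in closed form for the Gaussian target, and then apply \cref{prop:normal-equation}. For $p_0=\N(m,P^{-1})$ the OU posterior $\rhoOU$ is Gaussian. Its log-density in $x$ is
\[
    -\tfrac12(x-m)^\top P(x-m)-\tfrac{1}{2\gammat}\nor{y-\alphat x}^2+\text{const},
\]
so its precision is $\Lambda:=P+(\alphat^2/\gammat)I_d$. Its mean $\mu(y)$ is whatever solves the first-order condition; we will not need it explicitly. By \cref{lem:posterior-score-disagreement}, $\delta=\alphat^{-1}\nabla_x\log\rhoOU(x)=-\alphat^{-1}\Lambda(x-\mu)$. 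Writing $X_0-\mu\sim\N(0,\Lambda^{-1})$ under $\rhoOU$, we obtain $\Cdd=\alphat^{-2}\Lambda\Lambda^{-1}\Lambda=\alphat^{-2}\Lambda$. This matrix is positive definite, hence nonsingular.

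Next, the Tweedie residual is the centered signal $\eb=b-\E[b\mid y]=(\alphat/\gammat)(X_0-\mu)$, since $b$ is affine in $x$ with linear part $(\alphat/\gammat)I_d$ and \cref{eq:tweedie-identity} identifies its conditional mean with $s_t(y)$. Therefore
\[
    \Cebd=\E\!\left[\tfrac{\alphat}{\gammat}(X_0-\mu)\bigl(-\alphat^{-1}\Lambda(X_0-\mu)\bigr)^\top\right]
    =-\tfrac{1}{\gammat}\Lambda^{-1}\Lambda=-\tfrac{1}{\gammat}I_d,
\]
using the symmetry of $\Lambda$. This is the same value one gets from Stein's identity $\E[(X_0-\mu)\nabla\log\rhoOU^\top]=-I_d$, so the computation can be cross-checked that way.

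Then \cref{prop:normal-equation} gives
\[
    \Gstar=-\Cebd\Cdd^{-1}=\tfrac{\alphat^2}{\gammat}\Lambda^{-1}
    =\tfrac{\alphat^2}{\gammat}\bigl(P+\tfrac{\alphat^2}{\gammat}I_d\bigr)^{-1}
    =\alphat^2(\alphat^2I_d+\gammat P)^{-1}.
\]
Neither $\Lambda$ nor $\Cebd$ depends on $y$, so $\Gstar$ is independent of $y$. The eigen-decomposition statement follows immediately: $(\alphat^2I_d+\gammat P)u_j=(\alphat^2+\gammat\lambda_j)u_j$, and dividing numerator and denominator by $\alphat^2$ gives $\psi_t(\lambda)=1/(1+\lambdat\lambda)$. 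Uniqueness of the minimizer follows from \cref{prop:residual-risk}, because the excess risk $\tr((G-\Gstar)\Cdd(G-\Gstar)^\top)$ is strictly positive for $G\ne\Gstar$ when $\Cdd\succ0$.

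The only step requiring care is getting the Gaussian posterior precision and the sign conventions of $\delta$ and $\eb$ right. I would double-check them by confirming that $\E[\delta\mid y]=0$ and that the Tweedie identity is consistent with the posterior mean $\mu(y)$. Everything else is linear algebra.
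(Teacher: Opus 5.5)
Your proof is correct: the posterior precision $\Lambda=P+(\alphat^2/\gammat)I_d$, the moments $\Cdd=\alphat^{-2}\Lambda$ and $\Cebd=-\gammat^{-1}I_d$, the resulting gate, its independence of $y$, and uniqueness via \cref{prop:residual-risk} all check out.

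The paper reaches the conclusion differently. It uses the same two pointwise representations you derive; your coefficient $-\alphat^{-1}\Lambda$ is exactly its $-(\alphat\gammat)^{-1}(\alphat^2I_d+\gammat P)$. Instead of taking second moments, it notes that $\eb$ and $\delta$ are both linear in $u=x-\mu$ with invertible coefficients, so $\eb=-G_t\delta$ holds for every $x$. The gated residual therefore vanishes identically, and $G_t$ attains risk $0$, the absolute minimum. The normal equation is then invoked only for uniqueness.

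The pointwise route is shorter and proves more: the optimally gated Gaussian signal has zero conditional variance, which is the cancellation reused in \cref{prop:gaussian-centered-cancellation}. Your moment route is the Gaussian specialization of the general LFGI argument ($\Cebd$ from \cref{lem:Cebd-universal}, $\Cdd$ from \cref{lem:fisher-H}), and the paper itself records it in \cref{app:gaussian-formulas}. Its advantage is that it carries over unchanged to non-Gaussian targets, where no pointwise linear relation between $\eb$ and $\delta$ exists.
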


This is also the constant-Hessian Gaussian special case of \cref{thm:lfgi}, but the proof in \cref{app:proof-prop-gaussian-gate} uses direct Gaussian conditioning and the normal equation rather than invoking LFGI.

\begin{corollary}[Scalar blending cannot resolve anisotropic stiffness]
\label[corollary]{cor:scalar-failure}
Suppose $P$ has at least two eigenvalues $\lambda_i\neq\lambda_j$ and both corresponding eigendirections carry nonzero disagreement variance.  Then no scalar gate $gI_d$ can equal the Gaussian optimal gate unless
\[
    \psi_t(\lambda_i)=\psi_t(\lambda_j).
\]
In particular, scalar blending cannot simultaneously match high- and low-curvature directions in the singular Gaussian limit.
\end{corollary}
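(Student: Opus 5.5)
The plan is to reduce the corollary to \cref{prop:gaussian-gate} together with the excess-risk identity \cref{eq:excess-risk} of \cref{prop:residual-risk}. By \cref{prop:gaussian-gate}, $\Gstar(t)$ is symmetric with eigenpairs $(\psi_t(\lambda_k),u_k)$, where the $u_k$ are the orthonormal eigenvectors of $P$. Suppose a scalar gate $gI_d$ equals $\Gstar(t)$. Applying both sides to $u_i$ and $u_j$ gives $g u_i=\psi_t(\lambda_i)u_i$ and $g u_j=\psi_t(\lambda_j)u_j$. Hence $\psi_t(\lambda_i)=g=\psi_t(\lambda_j)$, which is the stated necessary condition.

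The hypothesis that both eigendirections carry nonzero disagreement variance is what makes the statement meaningful at the level of risk rather than as a bare matrix equality. The gate is only identified up to the null space of $\Cdd$, so I would state the stronger conclusion in risk form. By \cref{eq:excess-risk},
\[
\mathcal R(gI_d)-\mathcal R(\Gstar)=\tr\bigl((gI_d-\Gstar)\Cdd(gI_d-\Gstar)^\top\bigr).
\]
In the Gaussian case $\Cdd$ is diagonal in the basis $\{u_k\}$; I would verify this from the posterior-score form of \cref{lem:posterior-score-disagreement} and Gaussian conditioning. The right-hand side then equals $\sum_k(g-\psi_t(\lambda_k))^2 u_k^\top\Cdd u_k$. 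Because $u_i^\top\Cdd u_i>0$ and $u_j^\top\Cdd u_j>0$, this sum vanishes only if $g=\psi_t(\lambda_i)=\psi_t(\lambda_j)$.

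Next, $\psi_t$ is strictly decreasing in $\lambda$ for $t>0$, since $\lambdat>0$. Therefore $\lambda_i\neq\lambda_j$ forces $\psi_t(\lambda_i)\neq\psi_t(\lambda_j)$, and the excess risk of every scalar gate is strictly positive.

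For the singular-limit remark in \cref{def:singular-gaussian}, I would fix $t$ and take $\lambda_{\max}\to\infty$. This gives $\psi_t(\lambda_{\max})\to0$, while $\psi_t(\lambda_{\min})$ stays bounded away from zero when $\lambda_{\min}\lambdat$ stays bounded. Consequently $\max_k|g-\psi_t(\lambda_k)|\ge\tfrac12|\psi_t(\lambda_{\min})-\psi_t(\lambda_{\max})|$, and this is bounded below uniformly in $g$.

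The only nonroutine step is diagonalizing $\Cdd$ in the eigenbasis of $P$. I would compute $\alphat\delta=-P(x-m)-\alphat(\alphat x-y)/\gammat$ and note that the posterior covariance is $(P+\alphat^2\gammat^{-1}I)^{-1}$. Together these give $\Cdd=\alphat^{-2}(P+\lambdat^{-1}I_d)$, which commutes with $P$.
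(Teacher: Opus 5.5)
Your proposal is correct and its core is the paper's own argument: evaluate $gI_d=\Gstar(t)$ on the eigenvectors $u_i,u_j$ of $P$ to force $g=\psi_t(\lambda_i)=\psi_t(\lambda_j)$, then use strict monotonicity of $\psi_t$ for $t>0$. The paper does not need, and does not spell out, three things you add: the explicit computation $\Cdd=\alphat^{-2}(P+\lambdat^{-1}I_d)$, the excess-risk form $\sum_k(g-\psi_t(\lambda_k))^2u_k^\top\Cdd u_k>0$, and the quantitative singular-limit bound. These are valid strengthenings, and they give the ``nonzero disagreement variance'' hypothesis a genuine role; in the paper's bare matrix-equality argument that hypothesis is not actually needed.
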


Proof deferred to \cref{app:proof-cor-scalar-failure}.

In this Gaussian case, the population risk minimizer is the spectral filter $\psi_t$.  For example, when $\gammat/\alphat^2=1$, a direction with $\lambda=100$ has attenuation $1/101$, while a direction with $\lambda=0.01$ has attenuation about $0.99$.  A single scalar coefficient cannot reproduce both values.  The Gaussian example is not a failure case for centered primal regression; \cref{prop:gaussian-centered-cancellation} states the special centered-regression cancellation for this case.

The remaining question is whether the full matrix gate can be estimated in the regimes where it is needed.  \Cref{sec:primal-regression} specifies the centered primal matrix estimator used in \cref{sec:validation-plan} and isolates the non-Gaussian residual that makes this estimator difficult at finite reference count.

\section{Centered Primal Matrix Regression and the Non-Gaussian Failure Regime}
\label{sec:primal-regression}

The Gaussian spectral filter shows that scalar gates can be structurally insufficient, but it does not decide whether the full matrix gate is easy to estimate.  The Hessian-free construction estimates the normal-equation quotient directly:
\[
    \Gstar=-\Cebd \Cdd^{-1}.
\]
Its finite-reference version replaces the conditional moments by centered weighted OU-SNIS moments, regressing $b_i-\bar b$ on $\delta_i-\bar\delta$.  This construction has an exact Gaussian cancellation, but in non-Gaussian targets its error is controlled by the residual--disagreement cross moment in \cref{eq:centered-regression-residual-risk}.

\subsection{Centered primal matrix gate estimation}
\label{subsec:centered-primal-gate}

Fix $(y,t)$ and let $\{(x_i,w_i)\}_{i=1}^{N}$ be a finite OU-SNIS approximation to the posterior $\rhoOU$.  Define the score signals
\[
    b_i=b(x_i;y,t),
    \qquad
    c_i=c(x_i;t),
    \qquad
    \delta_i=c_i-b_i,
\]
and weighted means
\[
    \bar b=\sum_i w_i b_i,
    \qquad
    \bar\delta=\sum_i w_i \delta_i.
\]
The centered weighted-regression gate used in the validation benchmarks of \cref{sec:validation-plan} is
\begin{equation}
\label{eq:centered-primal-gate}
    \widehat G_{\rm cen}
    =
    -\widehat C_{\eb\delta}\,\bigl(\widehat C_{\delta\delta}+\lambdaridge I_d\bigr)^{-1},
\end{equation}
where $\lambdaridge\ge0$ is a Tikhonov regularization parameter and
\begin{equation}
\label{eq:centered-covariances}
    \widehat C_{\eb\delta}
    =
    \sum_i w_i (b_i-\bar b)(\delta_i-\bar\delta)^\top,
    \qquad
    \widehat C_{\delta\delta}
    =
    \sum_i w_i (\delta_i-\bar\delta)(\delta_i-\bar\delta)^\top.
\end{equation}
With exact conditional expectations, $\E[b\mid y,t]=s_t(y)$ and $\E[\delta\mid y,t]=0$, so
\[
    \E[\eb\delta^\top\mid y,t]=\Cebd,
    \qquad
    \Cov(\delta,\delta\mid y,t)=\Cdd.
\]
Thus the centered regression target is the population normal-equation gate $\Gstar=-\Cebd \Cdd^{-1}$.  Centering does not change the exact conditional objective.  It is the natural weighted-sample implementation because $s_t(y)$ is unavailable and must be replaced by the query-wise Tweedie mean $\bar b$.

We call this query-wise centered weighted least-squares gate the centered primal matrix blend, or Matrix Blend.  The independent score/gate-bank score estimator used in the validation benchmarks of \cref{sec:validation-plan} is defined in \cref{sec:finite-reference}, after the finite-reference bank convention has been introduced.

\begin{theorem}[Risk-weighted error identity for centered primal regression]
\label[theorem]{thm:direct-plugin-ill-conditioning}
Fix $(y,t)$ and assume $\Cdd\succ0$.  Let
\[
    \widehat C_{\delta\delta}=\Cdd+\Ddd,
    \qquad
    \widehat C_{\eb\delta}=\Cebd+\Debd,
    \qquad
    \widehat G_{\rm cen}=-\widehat C_{\eb\delta}\widehat C_{\delta\delta}^{-1},
\]
with $\widehat C_{\delta\delta}$ invertible and with the ridge omitted from the displayed algebra.  Then
\begin{equation}
\label{eq:direct-moment-risk-exact}
    \mathcal R(\widehat G_{\rm cen};y,t)-\mathcal R(\Gstar;y,t)
    =
    \nor{
        (\Debd+\Gstar\Ddd)
        \widehat C_{\delta\delta}^{-1}\Cdd^{1/2}
    }_F^2.
\end{equation}
If, in addition,
\[
    r_{\delta\delta}:=\nor{\Cdd^{-1/2}\Ddd \Cdd^{-1/2}}_{\op}<1,
\]
then
\begin{equation}
\label{eq:direct-moment-risk-bound}
    \mathcal R(\widehat G_{\rm cen};y,t)-\mathcal R(\Gstar;y,t)
    \le
    \frac{1}{(1-r_{\delta\delta})^2}
    \nor{(\Debd+\Gstar\Ddd)\Cdd^{-1/2}}_F^2.
\end{equation}
Consequently, if the right-hand side of \cref{eq:direct-moment-risk-bound} is at most $\eta\Gainstar(y,t)$, then
\[
    \mathcal R(0;y,t)-\mathcal R(\widehat G_{\rm cen};y,t)
    \ge
    (1-\eta)\Gainstar(y,t).
\]

\end{theorem}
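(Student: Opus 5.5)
The plan is to reduce everything to the excess-risk identity \cref{eq:excess-risk} of \cref{prop:residual-risk}, which gives $\mathcal R(\widehat G_{\rm cen})-\mathcal R(\Gstar)=\nor{(\widehat G_{\rm cen}-\Gstar)\Cdd^{1/2}}_F^2$. It therefore suffices to write the gate error $\widehat G_{\rm cen}-\Gstar$ in closed form. Since $\Cdd\succ0$, \cref{prop:normal-equation} gives $\Gstar=-\Cebd\Cdd^{-1}$, so $\Cebd=-\Gstar\Cdd$. Substituting this into the perturbed normal quotient gives
\[
    \widehat G_{\rm cen}-\Gstar
    =-(\Cebd+\Debd)\widehat C_{\delta\delta}^{-1}-\Gstar
    =-\bigl(\Cebd+\Debd+\Gstar\Cdd+\Gstar\Ddd\bigr)\widehat C_{\delta\delta}^{-1}
    =-(\Debd+\Gstar\Ddd)\widehat C_{\delta\delta}^{-1}.
\]
In the middle step I write $\Gstar=\Gstar\widehat C_{\delta\delta}\widehat C_{\delta\delta}^{-1}$ and $\widehat C_{\delta\delta}=\Cdd+\Ddd$. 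The $\Cebd+\Gstar\Cdd$ terms cancel by the normal equation. Inserting this into the excess-risk identity yields \cref{eq:direct-moment-risk-exact} directly; the sign disappears under the Frobenius norm.

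For the bound \cref{eq:direct-moment-risk-bound}, I factor through $\Cdd^{1/2}$. Write $\widehat C_{\delta\delta}=\Cdd^{1/2}(I+E)\Cdd^{1/2}$ with $E:=\Cdd^{-1/2}\Ddd\Cdd^{-1/2}$, so that $\widehat C_{\delta\delta}^{-1}\Cdd^{1/2}=\Cdd^{-1/2}(I+E)^{-1}$. The expression inside the norm then becomes $(\Debd+\Gstar\Ddd)\Cdd^{-1/2}(I+E)^{-1}$. I use $\nor{AB}_F\le\nor{A}_F\nor{B}_{\op}$. Since $E$ is symmetric with $\nor{E}_{\op}=r_{\delta\delta}<1$, the Neumann series, or equivalently the eigenvalues of $I+E$ lying in $[1-r_{\delta\delta},1+r_{\delta\delta}]$, gives $\nor{(I+E)^{-1}}_{\op}\le(1-r_{\delta\delta})^{-1}$. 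Squaring gives the stated factor. The condition $r_{\delta\delta}<1$ also guarantees that $\widehat C_{\delta\delta}$ is invertible, so that assumption is consistent.

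The consequence is a one-line rearrangement. By \cref{def:population-risk-reduction}, $\mathcal R(0)-\mathcal R(\widehat G_{\rm cen})=\Gainstar-\mathcal E(\widehat G_{\rm cen})$. The hypothesis gives $\mathcal E(\widehat G_{\rm cen})\le\eta\Gainstar$ through \cref{eq:direct-moment-risk-bound}, so the risk reduction is at least $(1-\eta)\Gainstar$.

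The only delicate point is the algebraic cancellation in the first step: one must insert $\widehat C_{\delta\delta}\widehat C_{\delta\delta}^{-1}$ on the correct (right) side, because matrices do not commute. Everything else is routine norm bookkeeping, plus the remark that the ridge is omitted as the statement allows.
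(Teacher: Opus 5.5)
Your proposal is correct and follows the paper's proof essentially step for step. The paper obtains $(\widehat G_{\rm cen}-\Gstar)\widehat C_{\delta\delta}=-(\Debd+\Gstar\Ddd)$ by subtracting the empirical and population normal equations, then uses the same factorization $\widehat C_{\delta\delta}=\Cdd^{1/2}(I+B)\Cdd^{1/2}$ with $\nor{(I+B)^{-1}}_{\op}\le(1-r_{\delta\delta})^{-1}$. You also spell out the final risk-reduction rearrangement, which the paper leaves implicit.
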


Proof deferred to \cref{app:proof-thm-direct-plugin-ill-conditioning}.

The risk-reduction inequality in \cref{thm:direct-plugin-ill-conditioning} gives the following sample-size condition for the centered-primal gate.  Here, and in the auxiliary bimodal calculation in \cref{subsec:misaligned-bimodal-gmm,subsec:bimodal-primal-failure-scale}, $N_{\eff}(y,t)$ denotes the effective sample size of the query-specific OU-SNIS weights; for iid conditional samples $N_{\eff}=N$, while for normalized weights $w_i(y,t)$ we use the standard inverse-squared-weight diagnostic $N_{\eff}(y,t)=(\sum_i w_i(y,t)^2)^{-1}$ \citep{kong1994sequential,owen2013mc,martino2017effective}.  If, with probability at least $1-\delta$,
\[
    r_{\delta\delta}\le \frac12,
    \qquad
    \nor{(\Debd+\Gstar\Ddd)\Cdd^{-1/2}}_F^2
    \le
    \frac{V_{\rm cen}(y,t)\log(1/\delta)}{N_{\eff}(y,t)},
\]
then $\widehat G_{\rm cen}$ captures at least a $(1-\eta)$ fraction of the population-optimal risk reduction provided
\[
    N_{\eff}(y,t)
    \gtrsim
    \frac{V_{\rm cen}(y,t)}{\eta\Gainstar(y,t)}
    \log\frac1\delta.
\]
This criterion is conditional on the OU posterior law $\rhoOU=p_0(\cdot\mid Y_t=y)$ and on the finite-reference sampling law used to form $\widehat G_{\rm cen}$; it is an effective-sample-size condition, not a universal lower bound.  The relevant comparison is whether the centered-regression noise scale $V_{\rm cen}$ is larger than the Hessian-concentration scale required by LFGI in the non-Gaussian worked examples.

\subsection{Gaussian cancellation and the non-Gaussian residual}
\label{subsec:non-gaussian-residual}

Define the optimal-gate residual
\begin{equation*}
    r_\star(x;y,t)
    :=
    b(x;y,t)-s_t(y)+\Gstar(y,t)\delta(x;y,t).
\end{equation*}
By \cref{eq:normal-equation},
\begin{equation*}
    \E[r_\star(X_0;y,t)\delta(X_0;y,t)^\top\mid Y_t=y]=0.
\end{equation*}
The residual $r_\star$ is the pointwise part of $b(X_0;y,t)-s_t(y)$ that is not removed by the best linear map $\Gstar\delta(X_0;y,t)$.  It vanishes in the Gaussian cancellation result \cref{prop:gaussian-centered-cancellation} and is the term whose empirical cross moment with $\delta$ drives the centered-primal finite-reference error.

\begin{proposition}[Gaussian pointwise cancellation]
\label[proposition]{prop:gaussian-centered-cancellation}
For $p_0=\N(m,P^{-1})$ and fixed $(y,t)$,
\begin{equation*}
b(x;y,t)-s_t(y)
    =
    -\Gstar(t)\delta(x;y,t)
    \qquad
    \text{for every }x.
\end{equation*}
Equivalently, $r_\star\equiv0$.  Consequently, every centered empirical sample satisfies
\[
    b_i-\bar b
    =
    -\Gstar(t)(\delta_i-\bar\delta),
\]
so $\widehat C_{\eb\delta}=-\Gstar\widehat C_{\delta\delta}$ exactly.  Therefore $\widehat G_{\rm cen}=\Gstar$ on the empirical disagreement span, and it equals $\Gstar$ as a full matrix whenever $\widehat C_{\delta\delta}$ has full rank.
\end{proposition}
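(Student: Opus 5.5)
The plan is to compute everything in closed form for the Gaussian target and check the identity pointwise in $x$. With $p_0=\N(m,P^{-1})$ we have $s_0(x)=-P(x-m)$. By Tweedie's formula for a Gaussian, $p_t=\N(\alphat m,\alphat^2P^{-1}+\gammat I_d)$, so
\[
    s_t(y)=-(\alphat^2P^{-1}+\gammat I_d)^{-1}(y-\alphat m).
\]
By the definitions in \cref{def:bcd},
\[
    \delta(x;y,t)=-\frac{P(x-m)}{\alphat}-\frac{\alphat x-y}{\gammat},
    \qquad
    b(x;y,t)-s_t(y)=\frac{\alphat x-y}{\gammat}+(\alphat^2P^{-1}+\gammat I_d)^{-1}(y-\alphat m).
\]
Both sides are affine in $x$, so the identity $b-s_t=-\Gstar\delta$ reduces to two checks: the linear coefficients in $x$ must agree, and the two sides must agree at one convenient point.

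For the linear part, the $x$-coefficient of $b-s_t$ is $(\alphat/\gammat)I_d$. The $x$-coefficient of $-\Gstar\delta$ is $\Gstar(P/\alphat+\alphat I_d/\gammat)$. By \cref{prop:gaussian-gate},
\[
    \Gstar=\alphat^2(\alphat^2I_d+\gammat P)^{-1},
    \qquad
    \frac{P}{\alphat}+\frac{\alphat}{\gammat}I_d=\frac{1}{\alphat\gammat}(\alphat^2 I_d+\gammat P),
\]
so the product is exactly $(\alphat/\gammat)I_d$.

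For the constant part, I would evaluate at the posterior mean $\mu=\E[X_0\mid Y_t=y]$. By \cref{lem:posterior-score-disagreement}, $\delta=\alphat^{-1}\nabla\log\rhoOU$. Since $\rhoOU$ is Gaussian, its score vanishes at its mean, so $\delta(\mu)=0$. On the other side, $b(\mu)-s_t(y)=\E[b\mid y]-s_t(y)=0$ by the Tweedie identity, because $b$ is affine in $x$. Hence both sides vanish at $\mu$, and together with the matching slopes this gives the identity for all $x$. This avoids having to invert $\alphat^2P^{-1}+\gammat I_d$ explicitly. The only point needing care is the consistency of the matrix algebra: $P$ and $\Gstar$ commute, so the order of multiplication is harmless.

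The consequences then follow directly. Since $b_i-s_t=-\Gstar\delta_i$ for each sample and the weights sum to one, taking weighted means gives $\bar b-s_t=-\Gstar\bar\delta$. Subtracting yields $b_i-\bar b=-\Gstar(\delta_i-\bar\delta)$. Plugging this into \eqref{eq:centered-covariances} gives $\widehat C_{\eb\delta}=-\Gstar\widehat C_{\delta\delta}$.

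For the last claim, write $\widehat G_{\rm cen}\widehat C_{\delta\delta}=-\widehat C_{\eb\delta}\widehat C_{\delta\delta}^{+}\widehat C_{\delta\delta}$, using the Moore--Penrose convention of \cref{ass:regularity} with the ridge set to zero. This equals $\Gstar\widehat C_{\delta\delta}^{+}\widehat C_{\delta\delta}$, which is $\Gstar$ restricted to the range of $\widehat C_{\delta\delta}$, i.e.\ the empirical disagreement span. When $\widehat C_{\delta\delta}$ has full rank this range is all of $\R^d$, so $\widehat G_{\rm cen}=\Gstar$ as a full matrix.

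I expect no real obstacle here. The main care point is the constant-term check, and using $\delta(\mu)=0$ sidesteps the explicit matrix inversion.
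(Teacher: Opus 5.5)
Your proposal is correct and is essentially the paper's argument. The paper writes $x=\mu_{y,t}+u$ and reads off $b-s_t(y)=(\alphat/\gammat)u$ and $\delta=-(\alphat\gammat)^{-1}(\alphat^2I_d+\gammat P)u$ by direct substitution, which is your slope check plus the vanishing of both sides at the posterior mean; you justify that vanishing more explicitly, via the Tweedie identity and the zero of the Gaussian posterior score.

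There is one small algebra slip in your last paragraph. The chain should read $\widehat G_{\rm cen}=-\widehat C_{\eb\delta}\widehat C_{\delta\delta}^{+}=\Gstar\widehat C_{\delta\delta}\widehat C_{\delta\delta}^{+}$, which is $\Gstar$ composed with the projector onto the empirical span. As you wrote it, $\widehat G_{\rm cen}\widehat C_{\delta\delta}$ equals $\Gstar\widehat C_{\delta\delta}$, not $\Gstar\widehat C_{\delta\delta}^{+}\widehat C_{\delta\delta}$.
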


Proof deferred to \cref{app:proof-prop-gaussian-centered-cancellation}.

Thus the singular Gaussian is not a failure case for the centered estimator.  The finite-reference difficulty for centered primal regression must therefore come from regimes where $r_\star\neq0$.

For centered regression, the coupled error appearing in \cref{eq:direct-moment-risk-exact} can be written as the empirical residual--disagreement cross moment
\begin{equation*}
    \widehat C_{r_\star\delta}
    :=
    \widehat C_{\eb\delta}+\Gstar\widehat C_{\delta\delta}
    =
    \sum_i w_i
    \bigl(r_{\star,i}-\bar r_\star\bigr)
    (\delta_i-\bar\delta)^\top,
\end{equation*}
where $\bar r_\star=\sum_iw_i r_{\star,i}$.  Hence
\begin{equation}
\label{eq:centered-regression-residual-risk}
    \mathcal R(\widehat G_{\rm cen})-\mathcal R(\Gstar)
    =
    \nor{\widehat C_{r_\star\delta}\,\widehat C_{\delta\delta}^{-1}\Cdd^{1/2}}_F^2.
\end{equation}
Ill-conditioning of $\widehat C_{\delta\delta}$ matters only through a nonzero residual--disagreement cross moment.  Equation \eqref{eq:centered-regression-residual-risk} shows that low rank or ill-conditioning of the disagreement covariance is not, by itself, the failure mode.  It becomes damaging only when a nonzero empirical residual--disagreement cross moment is present for the inverse to amplify.  Indeed, by submultiplicativity,
\begin{equation}
\label{eq:centered-residual-amplification-bound}
    \mathcal R(\widehat G_{\rm cen})-\mathcal R(\Gstar)
    \le
    \nor{\widehat C_{r_\star\delta} \Cdd^{-1/2}}_F^2
    \nor{\Cdd^{1/2}\widehat C_{\delta\delta}^{-1}\Cdd^{1/2}}_{\op}^2,
\end{equation}
with the usual additional ridge/projection term when \cref{eq:centered-primal-gate} uses $\lambdaridge>0$.  The factor $\|\widehat C_{r_\star\delta}\Cdd^{-1/2}\|_F^2$ measures the residual--disagreement cross moment.  The operator-norm factor $\|\Cdd^{1/2}\widehat C_{\delta\delta}^{-1}\Cdd^{1/2}\|_{\op}^2$ measures amplification by the empirical inverse of $\widehat C_{\delta\delta}$.  The Gaussian cancellation above sets $\widehat C_{r_\star\delta}$ to zero, so a nearly singular $\Cdd$ has nothing to amplify in the $\Cdd$-weighted excess-risk norm.  In stiff non-Gaussian targets, both factors can be active.

The same residual-covariance mechanism explains the misaligned-stiffness behavior used in the validation examples.  In misaligned Gaussian mixtures, the population gate is spatially varying in $(y,t)$ because component precisions have different stiff eigenspaces, and centered primal regression must estimate a residual-coupled covariance quotient whose finite-reference error is amplified by the empirical inverse of $\widehat C_{\delta\delta}$.  The auxiliary two-component calculation in \cref{subsec:misaligned-bimodal-gmm,subsec:bimodal-primal-failure-scale} records the closed-form spatially varying gate and the residual-leakage scale used to interpret the misaligned-GMM validation target.

\Cref{sec:lfgi} derives the population gate $\Gstar(y,t)$ from the conditional Hessian average $H(y,t)$ instead of from the covariance quotient $-\Cebd\Cdd^{-1}$.

\section{Laplace--Fisher Gate Identity}
\label{sec:lfgi}

LFGI solves the full-matrix local trace-risk problem $\min_G R(G;y,t)$ from \cref{def:local-risk}, whose normal equation gives $\Gstar=-\Cebd\Cdd^{-1}$, but estimates the conditional Hessian average $H(y,t)$ rather than the moment quotient $-\Cebd\Cdd^{-1}$.  Instead of estimating the residual-coupled regression moments and inverting the empirical disagreement covariance, we use Fisher--Stein identities to rewrite the normal equation as a resolvent of the OU-conditional average target observed information. With exact conditional moments, the covariance quotient $-\Cebd\Cdd^{-1}$ and the Hessian-average map $\Psiop(H)=\alphat^2(\alphat^2I_d+\gammat H)^{-1}$ are equivalent, but they require different finite-reference estimates.

\subsection{Fisher--Stein identities for the normal equation}

The reduction of \cref{eq:normal-equation} to a resolvent uses the fact that the disagreement signal is the posterior score.  Therefore its second moment is a posterior Fisher information, and the corresponding Fisher information can be converted into an expected negative Hessian by Bartlett's identity \citep{vaart1998asymptotic}.

\begin{definition}[Target observed information and conditional average]
\label[definition]{def:H0-H}
Define
\begin{equation*}
    H_0(x):=-\nabla_x^2\log p_0(x),
\end{equation*}
and
\begin{equation*}
    H(y,t):=
    \E[H_0(X_0)\mid Y_t=y].
\end{equation*}
\end{definition}

\begin{lemma}[Disagreement covariance is posterior Fisher information]
\label[lemma]{lem:M-fisher}
Let
\[
    \mathcal I(\rhoOU)
    :=
    \E_{\rhoOU}
    [
        q(X)q(X)^\top
    ],
    \qquad
    q=\nabla_x\log\rhoOU.
\]
Then
\begin{equation*}
    \Cdd(y,t)=\frac{1}{\alphat^2}\mathcal I(\rhoOU).
\end{equation*}
Moreover,
\begin{equation}
\label{eq:bartlett}
    \mathcal I(\rhoOU)
    =
    \E_{\rhoOU}[-\nabla_x^2\log\rhoOU(X)].
\end{equation}
\end{lemma}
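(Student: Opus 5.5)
The plan has two independent parts: an algebraic identification of $\Cdd$ with the posterior Fisher information, and the Bartlett identity \eqref{eq:bartlett} obtained by integration by parts under $\rhoOU$.

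For the first part, I would invoke \cref{lem:posterior-score-disagreement}. It gives the pointwise relation $q(x;y,t)=\alphat\,\delta(x;y,t)$. Hence $\delta\delta^\top=\alphat^{-2}qq^\top$ for every $x$. Taking the conditional expectation under $\rhoOU$ and using \cref{def:local-risk} yields $\Cdd(y,t)=\E[\delta\delta^\top\mid Y_t=y]=\alphat^{-2}\E_{\rhoOU}[qq^\top]=\alphat^{-2}\mathcal I(\rhoOU)$. Both sides are finite by \cref{ass:regularity}, so no further work is needed here.

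For the Bartlett identity, I would work componentwise. Start from the identity $\partial_i\partial_j\log\rho=\rho^{-1}\partial_i\partial_j\rho-(\partial_i\log\rho)(\partial_j\log\rho)$, valid wherever $\rho=\rhoOU>0$. Multiplying by $\rho$ and integrating gives
\[
\E_{\rhoOU}[-\partial_i\partial_j\log\rhoOU]=\E_{\rhoOU}[q_iq_j]-\int\partial_i\partial_j\rhoOU(x)\,\dd x .
\]
So the claim reduces to showing $\int\partial_i\partial_j\rhoOU\,\dd x=0$. Equivalently, writing $\partial_j\rhoOU=\rhoOU q_j$, it suffices that $\int\partial_i(\rhoOU q_j)\,\dd x=0$, which is a divergence-theorem statement. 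Regularity of the integrand comes from the positivity and $C^2$ assumption on $p_0$ together with the explicit Gaussian kernel. Since $\rhoOU\propto \KOU(y\mid\cdot)\,p_0$, it is also $C^2$ on the support. Moreover, $\nabla^2\log\rhoOU=-H_0+(\alphat^2/\gammat)I_d$, a formula I would record because it is what \cref{lem:fisher-H} will later use.

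The main obstacle is justifying that the boundary terms vanish. On $\R^d$, I would integrate over balls $B_R$ and apply the divergence theorem. The surface integral $\oint_{\partial B_R}\rhoOU q_j\,n_i\,\dd S$ must tend to zero along some sequence $R\to\infty$. This follows because $\rhoOU q$ is integrable, which is exactly the finite first-moment and decay requirement stated in \cref{ass:regularity}. Integrability of $|\rhoOU q|$ implies the spherical integrals $\int_{\partial B_R}|\rhoOU q|\,\dd S$ are integrable in $R$, hence have $\liminf$ zero. The volume term converges by dominated convergence, using finiteness of the conditional moments of $qq^\top$ and $\nabla^2\log\rhoOU$.

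If the support of $p_0$ is a proper subset, the same argument applies with boundary contributions assumed to vanish by the decay clause of \cref{ass:regularity}. I would state this explicitly rather than attempt a general boundary analysis. Combining the two parts yields both displayed identities of the lemma.
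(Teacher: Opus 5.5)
Your proposal is correct and follows the paper's proof: the first identity comes from $q=\alphat\delta$, and your reduction to $\int\partial_i(\rhoOU q_j)\,\dd x=0$ is the same single integration by parts the paper applies to $\int(\partial_i\rhoOU)(\partial_j\log\rhoOU)\,\dd x$, with your ball-truncation argument making explicit the boundary decay that the paper simply attributes to \cref{ass:regularity} (the needed $\rhoOU|q|\in L^1$ already follows from finite Fisher information by Cauchy--Schwarz). One slip in your side remark does not affect this lemma but would break \cref{lem:fisher-H}: the Gaussian kernel contributes $-(\alphat^2/\gammat)I_d$ to the Hessian, so $\nabla_x^2\log\rhoOU=-H_0-(\alphat^2/\gammat)I_d$, not $-H_0+(\alphat^2/\gammat)I_d$.
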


Proof deferred to \cref{app:proof-lem-m-fisher}.

\begin{lemma}[Universal residual--disagreement cross moment]
\label[lemma]{lem:Cebd-universal}
Under \cref{ass:regularity},
\begin{equation}
\label{eq:Cebd-universal}
    \Cebd(y,t)=-\frac{1}{\gammat}I_d.
\end{equation}
\end{lemma}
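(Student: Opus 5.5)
The plan is to use the posterior-score representation of \cref{lem:posterior-score-disagreement}, $\delta=\alphat^{-1}q$ with $q=\nabla_x\log\rhoOU$, together with Stein's identity under $\rhoOU$. For any sufficiently regular vector field $f$ with enough decay, integration by parts gives
\[
    \E_{\rhoOU}\bigl[f(X)q(X)^\top\bigr]
    =
    \int f\,(\nabla_x\rhoOU)^\top\,\dd x
    =
    -\E_{\rhoOU}\bigl[\nabla_x f(X)\bigr].
\]
Here $\nabla_x f$ is the Jacobian with entries $\partial_j f_i$. The boundary terms vanish by the decay part of \cref{ass:regularity}.

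I will apply this with $f=\eb(\cdot;y,t)=b(\cdot;y,t)-s_t(y)$. Since $s_t(y)$ does not depend on $x$ and $b(x;y,t)=(\alphat x-y)/\gammat$ is affine in $x$, the Jacobian is constant: $\nabla_x\eb=(\alphat/\gammat)I_d$. Therefore
\[
    \Cebd
    =
    \E[\eb\delta^\top\mid Y_t=y]
    =
    \frac{1}{\alphat}\E_{\rhoOU}[\eb q^\top]
    =
    -\frac{1}{\alphat}\cdot\frac{\alphat}{\gammat}I_d
    =
    -\frac{1}{\gammat}I_d .
\]

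As an independent check, one can split $\Cebd=\E[b\delta^\top]-s_t(y)\E[\delta]^\top$. The second term vanishes by \cref{eq:delta-zero-mean}, so the only remaining content is $\E[x\,q^\top]=-I_d$. This is the same Stein identity with $f(x)=x$; the $y$ part of $b$ drops out because $\E[q]=0$.

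The only real obstacle is justifying the integration by parts, that is, showing the boundary terms $\int\partial_j(f_i\rhoOU)\,\dd x$ vanish. On all of $\R^d$ this follows from the assumed decay of $\rhoOU$ together with the finiteness of $\E|X_0|\,|q|$. The Gaussian factor $\KOU$ in $\rhoOU$ supplies decay, which makes the moment condition plausible. If $p_0$ has a proper support with a boundary, the boundary flux $\int_{\partial}\eb\,\rhoOU\,n^\top\,\dd S$ must vanish as well; I would state this explicitly as the content of the ``sufficient decay'' clause in \cref{ass:regularity}. It is the same condition already needed for \eqref{eq:tsi-identity} and \cref{eq:delta-zero-mean}. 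With that in place, the identity is universal: it holds for every $(y,t)$ and is independent of $p_0$.
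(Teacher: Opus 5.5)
Your proposal is correct and follows essentially the paper's own argument: write $\delta=\alphat^{-1}\nabla_x\log\rhoOU$, integrate by parts under $\rhoOU$, and use that the Jacobian of the affine Tweedie signal is $(\alphat/\gammat)I_d$. The only cosmetic difference is that the paper first drops the $s_t(y)$ term using $\E[\delta\mid Y_t=y]=0$ and then integrates by parts against $b_i$. You instead apply the Stein identity directly to $\eb$, whose Jacobian is the same; your ``independent check'' is exactly the paper's route.
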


Proof deferred to \cref{app:proof-lem-cebd-universal}.

Under \cref{ass:regularity}, \cref{eq:Cebd-universal} gives the explicit identity $\Cebd=-\gammat^{-1}I_d$.  Therefore the normal-equation formula $\Gstar=-\Cebd\Cdd^{-1}$ leaves $\Cdd$ as the only target-dependent moment; \cref{lem:fisher-H} then rewrites $\Cdd$ in terms of $H(y,t)$.

\begin{lemma}[Posterior Fisher information equals averaged observed information plus OU curvature]
\label[lemma]{lem:fisher-H}
Under \cref{ass:regularity},
\begin{equation*}
    \mathcal I(\rhoOU)
    =
    H(y,t)+\frac{\alphat^2}{\gammat}I_d.
\end{equation*}
Consequently,
\begin{equation}
\label{eq:M-H}
    \Cdd(y,t)
    =
    \frac{1}{\alphat^2}H(y,t)+\frac{1}{\gammat}I_d
    =
    \frac{1}{\gammat\alphat^2}
    \left(
        \alphat^2I_d+\gammat H(y,t)
    \right).
\end{equation}
\end{lemma}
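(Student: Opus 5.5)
The plan is to combine the Bartlett identity \eqref{eq:bartlett} from \cref{lem:M-fisher} with an explicit formula for the Hessian of the log OU posterior. By \cref{def:ou-posterior},
\[
    \log\rhoOU(x)
    =
    \log p_0(x)-\frac{\nor{y-\alphat x}^2}{2\gammat}+\text{const}(y,t),
\]
where the constant does not depend on $x$.  Differentiating twice in $x$ gives
\[
    -\nabla_x^2\log\rhoOU(x)=H_0(x)+\frac{\alphat^2}{\gammat}I_d .
\]
The OU part contributes the constant matrix $\alphat^2\gammat^{-1}I_d$, since the Gaussian kernel is quadratic in $x$ with Hessian $-\alphat^2\gammat^{-1}I_d$.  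This step is pointwise and needs only that $p_0$ be $C^2$ on its support, as granted by \cref{ass:regularity}.

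Next I take the $\rhoOU$-expectation of this identity and insert it into \eqref{eq:bartlett}.  The expectation of $H_0(X_0)$ under $\rhoOU$ is, by \cref{def:H0-H}, exactly $H(y,t)$.  This gives $\mathcal I(\rhoOU)=H(y,t)+\alphat^2\gammat^{-1}I_d$.  The expectation is finite because \cref{ass:regularity} assumes all relevant conditional moments are finite.

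Then I apply the first part of \cref{lem:M-fisher}, $\Cdd=\alphat^{-2}\mathcal I(\rhoOU)$, which yields
\[
    \Cdd=\alphat^{-2}H+\gammat^{-1}I_d .
\]
Factoring out $(\gammat\alphat^2)^{-1}$ gives the second form in \eqref{eq:M-H}.  This is routine algebra.

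The only delicate point is the validity of the Bartlett identity itself.  It rests on an integration by parts, $\E_{\rhoOU}[\nabla_x^2\rhoOU/\rhoOU]=\int\nabla_x^2\rhoOU\,\dd x=0$, which requires boundary terms to vanish.  Since \cref{lem:M-fisher} is already established and the decay needed for these integration-by-parts steps is part of \cref{ass:regularity}, I will simply cite it.  If one wanted a self-contained argument, the alternative route is direct: write $\Cdd=\E[\delta\delta^\top]$, substitute $\delta=\alphat^{-1}q$ from \cref{lem:posterior-score-disagreement}, and integrate by parts once in $\E[q q^\top]=-\E[\nabla_x q]$.  This again requires $\rhoOU\,q$ to decay at the boundary of the support, which is the main obstacle.

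As a consistency check, the result combined with \cref{lem:Cebd-universal} reproduces the Gaussian gate of \cref{prop:gaussian-gate} when $H_0\equiv P$.
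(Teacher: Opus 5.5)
Your proposal is correct and follows the paper's proof essentially verbatim: differentiate $\log\rhoOU$ twice to get $-\nabla_x^2\log\rhoOU=H_0+\alphat^2\gammat^{-1}I_d$, take the $\rhoOU$-expectation and invoke the Bartlett identity \eqref{eq:bartlett}, then use $\Cdd=\alphat^{-2}\mathcal I(\rhoOU)$ from \cref{lem:M-fisher}. Your remarks on the integration-by-parts requirement and the Gaussian consistency check are accurate but not needed beyond citing \cref{lem:M-fisher} and \cref{ass:regularity}.
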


Proof deferred to \cref{app:proof-lem-fisher-h}.

The expression for $\Cdd$ in \cref{eq:M-H} shows that the normal-equation covariance is already a shifted Hessian average.  The LFGI estimator estimates the conditional Hessian average $H(y,t)$ and then applies the map $\Psiop(H)=\alphat^2(\alphat^2I_d+\gammat H)^{-1}$.

\subsection{The exact optimal gate theorem}

The preceding identities identify both matrices in the population normal equation: $\Cebd=-\gammat^{-1}I_d$ and $\Cdd=(\gammat\alphat^2)^{-1}(\alphat^2I_d+\gammat H)$.  Substituting them into $\Gstar=-\Cebd\Cdd^{-1}$ gives the Hessian-resolvent gate.

\begin{theorem}[Laplace--Fisher Gate Identity]
\label[theorem]{thm:lfgi}
Under \cref{ass:regularity}, fix $(y,t)$ and assume that the averaged Hessian shift
\[
    A(y,t):=\alphat^2I_d+\gammat\E[H_0(X_0)\mid Y_t=y]
\]
is invertible.  Then the matrix gate that minimizes the conditional score-estimation risk in \cref{def:local-risk} is
\begin{equation}
\label{eq:lfgi}
    \Gstar(y,t)
    =
    \alphat^2
    \left(
        \alphat^2I_d+
        \gammat\,\E[H_0(X_0)\mid Y_t=y]
    \right)^{-1}.
\end{equation}
Equivalently,
\begin{equation}
\label{eq:Psi}
    \Gstar(y,t)=\Psiop(H(y,t)),
    \qquad
    \Psiop(P):=\alphat^2(\alphat^2I_d+\gammat P)^{-1}.
\end{equation}
\end{theorem}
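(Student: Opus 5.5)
The plan is to substitute the two Fisher--Stein identities into the normal equation of \cref{prop:normal-equation} and invert. By \cref{prop:normal-equation}, any minimizer $\Gstar$ of $\mathcal R(\cdot;y,t)$ satisfies $\Gstar\Cdd+\Cebd=0$. By \cref{prop:residual-risk}, the excess risk of any $G$ is $\tr((G-\Gstar)\Cdd(G-\Gstar)^\top)$. So once $\Cdd$ is shown to be nonsingular, the minimizer is unique and equals $-\Cebd\Cdd^{-1}$. The work therefore reduces to computing the two conditional moments $\Cebd$ and $\Cdd$ in closed form.

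First, I invoke \cref{lem:Cebd-universal}, which gives $\Cebd(y,t)=-\gammat^{-1}I_d$ under \cref{ass:regularity}. If I were to re-derive it, the route is this. Since $\E[\delta\mid Y_t=y]=0$, we have $\Cebd=\E[b\,\delta^\top\mid Y_t=y]$. By \cref{lem:posterior-score-disagreement}, $\delta=\alphat^{-1}\nabla_x\log\rhoOU$. Integration by parts against $\rhoOU$ then gives $\E[b\,\delta^\top]=-\alphat^{-1}\E[\nabla_x b]$ with $\nabla_x b=(\alphat/\gammat)I_d$. The decay hypothesis in \cref{ass:regularity} removes the boundary terms.

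Second, I invoke \cref{lem:fisher-H}, which gives $\Cdd=(\gammat\alphat^2)^{-1}A(y,t)$ with $A=\alphat^2I_d+\gammat H(y,t)$. The underlying argument combines two facts. The first is \cref{lem:M-fisher}: $\Cdd=\alphat^{-2}\mathcal I(\rhoOU)$ together with Bartlett's identity \eqref{eq:bartlett}. The second is the explicit Hessian $-\nabla_x^2\log\rhoOU(x)=H_0(x)+(\alphat^2/\gammat)I_d$, which follows from \eqref{eq:rho-ou} because $-\log\KOU(y\mid x)$ is quadratic in $x$ with Hessian $(\alphat^2/\gammat)I_d$. Averaging under $\rhoOU$ yields $\mathcal I(\rhoOU)=H+(\alphat^2/\gammat)I_d$.

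Third, I combine the two moments. Since $A$ is assumed invertible and $\alphat,\gammat>0$ for $t>0$, $\Cdd$ is invertible with $\Cdd^{-1}=\gammat\alphat^2A^{-1}$. Hence
\[
\Gstar=-\Cebd\Cdd^{-1}=\gammat^{-1}\cdot\gammat\alphat^2A^{-1}=\alphat^2A^{-1},
\]
which is \eqref{eq:lfgi}, and \eqref{eq:Psi} is just this formula rewritten with $\Psiop$. Uniqueness of the minimizer follows from \eqref{eq:excess-risk}, and actually more is true. Since $\Cdd=\alphat^{-2}\mathcal I(\rhoOU)$ is a second moment, it is positive semidefinite, so invertibility upgrades it to positive definite. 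The excess risk is then strictly positive for every $G\neq\Gstar$.

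The main obstacle is the regularity bookkeeping, not the algebra. Both the integration by parts behind \cref{lem:Cebd-universal} and Bartlett's identity need vanishing boundary terms for $\rhoOU$ and finiteness of $\E[H_0\mid Y_t=y]$. These are exactly the hypotheses bundled into \cref{ass:regularity}, so I would cite the lemmas rather than redo them. The remaining subtlety is the logical direction: invertibility of $A$ is what makes $\Cdd$ nonsingular. This is what licenses writing $\Gstar=-\Cebd\Cdd^{-1}$ rather than working with a Moore--Penrose solution of \eqref{eq:normal-equation}.
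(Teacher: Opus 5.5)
Your proposal is correct and follows the paper's proof essentially step for step: the paper likewise substitutes $\Cebd=-\gammat^{-1}I_d$ from \cref{lem:Cebd-universal} and $\Cdd=(\gammat\alphat^2)^{-1}(\alphat^2I_d+\gammat H)$ from \cref{lem:fisher-H} into the normal-equation formula $\Gstar=-\Cebd\Cdd^{-1}$ of \cref{prop:normal-equation}. You also note that invertibility of $A$ makes $\Cdd$ positive definite, so \eqref{eq:excess-risk} gives a unique global minimizer; the paper leaves this point implicit, and your remark tightens it.
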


Proof deferred to \cref{app:proof-thm-lfgi}.

The name \emph{Laplace--Fisher} is meant in this limited sense.  The Fisher part is the identity in \cref{lem:M-fisher,lem:fisher-H}: the disagreement covariance is the posterior Fisher information of the OU conditional law, equivalently the averaged observed information plus the OU curvature term in \cref{eq:M-H}.  The Laplace part refers to the same local-curvature object used by Laplace approximations: a Hessian or observed-information matrix is shifted by the diffusion curvature before inversion.  Here that curvature is not used to replace the target by a Gaussian approximation; it gives the risk-minimizing gate between two exact score identities.

LFGI is not a different risk objective.  It is the same risk-minimizing matrix gate as the centered primal normal equation, written as the Hessian-average map $\Psiop(H)=\alphat^2(\alphat^2I_d+\gammat H)^{-1}$.  The centered primal estimator computes covariance moments of $(b,\delta)$ and inverts the empirical disagreement covariance.  LFGI estimates $H$ and applies the map $\Psiop$. With exact conditional moments, the moment quotient $-\Cebd\Cdd^{-1}$ and the Hessian-average formula $\Psiop(H)$ agree, but at finite $N$ they require estimating different quantities once the Gaussian pointwise cancellation is broken.

The corresponding population score signal is the gated signal from \cref{def:gated-score} with $G=\Gstar$:
\[
    z_{\LFGI}(x;y,t)=b(x;y,t)+\Gstar(y,t)\delta(x;y,t).
\]
Equation \eqref{eq:measurable-gate-target} gives $\E[z_{\LFGI}(X_0;y,t)\mid Y_t=y]=s_t(y)$, so LFGI remains an estimator of the OU-marginal score $s_t(y)$.  By \eqref{eq:normal-equation} and \cref{thm:lfgi}, the same gate minimizes the conditional trace risk in \cref{def:local-risk} over matrices $G\in\R^{d\times d}$.

\subsection{Spectral attenuation}

The resolvent form makes the gate interpretable as a curvature-dependent filter.  The active directions are determined by the eigenvalues of the conditional observed information, not by a fixed coordinate basis.

If $H(y,t)u_j=\lambda_j(y,t)u_j$, then the resolvent formula gives the spectral attenuation rule
\[
    \Gstar(y,t)u_j
    =
    \frac{\alphat^2}{\alphat^2+\gammat\lambda_j(y,t)}u_j.
\]
Thus directions with $\gammat\lambda_j\gg\alphat^2$ are attenuated toward Tweedie, while directions with $\gammat\lambda_j\ll\alphat^2$ remain close to TSI.  Equivalently, the active curvature threshold at time $t$ is
\[
    \lambda_{\rm act}(t)
    :=
    \frac{\alphat^2}{\gammat}
    =
    \lambdat^{-1}.
\]
Curvatures near $\lambda_{\rm act}(t)$ are most sensitive to gate error.  The scale $\lambda_{\rm act}(t)=\alphat^2/\gammat$ determines the attenuation rule above: at each $t$, LFGI asks whether a conditional curvature direction is large or small relative to this threshold, and it chooses the corresponding amount of correction automatically.

\section{Finite-Reference LFGI}
\label{sec:finite-reference}

The population LFGI gate depends on the conditional average $H(y,t)$, which is generally unavailable in closed form.  The finite-reference Hessian-average estimator approximates $H(y,t)$ by SNIS, and the finite-reference LFGI gate estimator applies the resolvent map to that average. The implementable score estimator uses two independent reference banks: a score bank for the Tweedie and TSI signals, and a gate bank for the Hessian average.  The bank split is not needed to define the population gate.  It makes the random gate a function only of the gate bank.  After conditioning on the gate bank and the query $(y,t)$, the gate is fixed with respect to the score-bank reference samples, which is the independence needed for \eqref{eq:measurable-gate-target}.  The same OU posterior weights are used within each bank \citep{owen2013mc,liu2001combined,kong1994sequential}.

\subsection{OU-SNIS Hessian averaging}

Let
\[
    \bank_{\rm s}=\{X^{\rm s}_i\}_{i=1}^{\Ns},
    \qquad
    \bank_{\rm g}=\{X^{\rm g}_i\}_{i=1}^{\Ng},
    \qquad
    X^{\rm s}_i,X^{\rm g}_i\overset{\rm iid}{\sim}p_0
\]
be independent score and gate banks.  For $\ell\in\{\mathrm{s},\mathrm{g}\}$, define unnormalized and normalized OU weights
\begin{equation*}
    \widetilde w^{\ell}_i(y,t)=\KOU(y\mid X^{\ell}_i),
    \qquad
    w^{\ell}_i(y,t)=
    \frac{\widetilde w^{\ell}_i(y,t)}
         {\sum_j\widetilde w^{\ell}_j(y,t)}.
\end{equation*}
These weights approximate the OU posterior $\rhoOU$ at the query point $(y,t)$ within each bank.  Applying the gate-bank weights to $H_0(X^{\rm g}_i)$ gives the empirical conditional observed information.

\begin{definition}[Finite-reference Hessian average]
\label[definition]{def:Hhat}
Define
\begin{equation*}
    \Hhat^{\rm g}_{\Ng}(y,t)
    =
    \sum_{i=1}^{\Ng}w^{\rm g}_i(y,t)H_0(X^{\rm g}_i).
\end{equation*}
When the gate bank is the only finite-reference quantity, we write this average generically as $\Hhat_N$.
\end{definition}

\begin{definition}[Finite-reference LFGI gate]
\label[definition]{def:finite-lfgi-gate}
Define
\begin{equation*}
    \Ghat^{\rm g}_{\Ng}(y,t)
    =
    \alphat^2
    \left(
        \alphat^2I_d+\gammat\Hhat^{\rm g}_{\Ng}(y,t)
    \right)^{-1}.
\end{equation*}
Again, $\Ghat_N$ denotes the corresponding gate when the gate bank is the only finite-reference quantity.
\end{definition}

\begin{definition}[Independent-bank finite-reference LFGI score estimator]
\label[definition]{def:finite-lfgi-score}
Define the score-bank SNIS averages
\begin{equation*}
    \widehat b^{\rm s}(y,t)=\sum_i w^{\rm s}_i(y,t)b(X^{\rm s}_i;y,t),
    \qquad
    \widehat c^{\rm s}(y,t)=\sum_i w^{\rm s}_i(y,t)c(X^{\rm s}_i;t),
    \qquad
    \widehat\delta^{\rm s}(y,t)=\widehat c^{\rm s}(y,t)-\widehat b^{\rm s}(y,t).
\end{equation*}
The implementable independent-bank LFGI score estimator is
\begin{equation*}
    \sLFGI(y,t)
    =
    \widehat b^{\rm s}(y,t)
    +
    \Ghat^{\rm g}_{\Ng}(y,t)\widehat\delta^{\rm s}(y,t).
\end{equation*}
\end{definition}

The same bank convention defines the finite-reference Matrix Blend estimator used in the validation benchmarks of \cref{sec:validation-plan}.  This baseline estimates the gate from centered covariances rather than from Hessians.  Its gate-bank reference samples form \(\widehat G_{\rm cen}^{\rm g}\) by \cref{eq:centered-primal-gate,eq:centered-covariances}, and its score-bank reference samples form \(\widehat b^{\rm s}\) and \(\widehat\delta^{\rm s}\) as above:
\begin{equation*}
    \sMBLEND(y,t)
    =
    \widehat b^{\rm s}(y,t)
    +
    \widehat G_{\rm cen}^{\rm g}(y,t)\widehat\delta^{\rm s}(y,t).
\end{equation*}
Thus Matrix Blend and LFGI target the same population gate $\Gstar(y,t)$, but Matrix Blend estimates the empirical covariance quotient in \cref{eq:centered-primal-gate,eq:centered-covariances}, whereas LFGI estimates the empirical Hessian average $\widehat H_N(y,t)$ before applying $\Psiop$.

The LFGI gate is therefore an adaptive function of an independently drawn local Hessian bank rather than a separately trained network or a global time-dependent coefficient.  Using the same score bank for $\widehat b^{\rm s}$ and $\widehat c^{\rm s}$ is harmless because their difference is the zero-mean control variate being averaged.  The independence requirement is specifically between the random gate and the score-signal reference samples to which that gate is applied.

\subsection{Consistency}

Fix $(y,t)$; the gate-bank draw is the only randomness in the consistency statement.  The self-normalized Hessian average $\widehat H_N(y,t)$, and therefore the plug-in gate $\widehat G_N(y,t)$, is generally biased at finite bank size.  The basic asymptotic property is consistency: as the gate bank grows, $\widehat H_N(y,t)$ converges to the OU-conditional Hessian average $H(y,t)$, and $\widehat G_N(y,t)$ converges to the population LFGI gate $\Gstar(y,t)$ \citep{vaart1998asymptotic,owen2013mc}.

\begin{theorem}[OU-SNIS consistency for the LFGI gate]
\label[theorem]{thm:snis-lfgi-consistency}
Assume
\[
    0<p_t(y)<\infty,
    \qquad
    \E_{p_0}\left[
        \KOU(y\mid X)\nor{H_0(X)}
    \right]<\infty,
\]
and assume the aggregate covariance $\Cdd(y,t)$ is nonsingular, equivalently $A(y,t)=\alphat^2I_d+\gammat H(y,t)$ is nonsingular.  Then
\begin{equation*}
    \Hhat_N(y,t)
    \xrightarrow[N\to\infty]{a.s.}
    H(y,t),
\end{equation*}
and, for all sufficiently large $N$ almost surely,
\begin{equation*}
    \Ghat_N(y,t)
    \xrightarrow[N\to\infty]{a.s.}
    \Gstar(y,t).
\end{equation*}
\end{theorem}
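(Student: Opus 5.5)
The plan is to write $\Hhat_N$ as a ratio of two empirical averages and apply the strong law of large numbers to numerator and denominator separately. Concretely,
\[
    \Hhat_N(y,t)=\frac{\frac1N\sum_{i=1}^N \KOU(y\mid X_i)H_0(X_i)}{\frac1N\sum_{i=1}^N \KOU(y\mid X_i)},
    \qquad X_i\overset{\rm iid}{\sim}p_0 .
\]
The denominator summands are nonnegative and bounded by $(2\pi\gammat)^{-d/2}$, so they are integrable. By the strong law, the denominator converges a.s. to $\E_{p_0}[\KOU(y\mid X)]=p_t(y)$, which is strictly positive and finite by assumption.

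For the numerator, we work entrywise. Each entry satisfies $|\KOU(y\mid X)(H_0(X))_{jk}|\le \KOU(y\mid X)\nor{H_0(X)}$, and this bound is integrable by the moment hypothesis. The strong law then gives a.s. convergence of the numerator to $\E_{p_0}[\KOU(y\mid X)H_0(X)]$. There are finitely many entries, so the a.s. events can be intersected.

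Dividing, and using continuity of $(M,z)\mapsto M/z$ on $\{z>0\}$, we obtain a.s.
\[
    \Hhat_N(y,t)\to \frac{\E_{p_0}[\KOU(y\mid X)H_0(X)]}{p_t(y)}=\int H_0(x)\rhoOU(x)\,\dd x=H(y,t),
\]
by \cref{def:ou-posterior}. On the a.s. event, the denominator is also eventually positive, so the normalized weights are well defined for large $N$.

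For the gate, we apply the continuous mapping theorem to $\Psiop$. The map $P\mapsto \alphat^2I_d+\gammat P$ is affine, and matrix inversion is continuous on the open set of nonsingular matrices. By hypothesis $A(y,t)=\alphat^2I_d+\gammat H(y,t)$ is nonsingular, which is equivalent to $\Cdd$ nonsingular by \cref{eq:M-H}.

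Since $\widehat A_N:=\alphat^2I_d+\gammat\Hhat_N\to A$ a.s., and the nonsingular matrices form an open set, $\widehat A_N$ is nonsingular for all sufficiently large $N$ on the same a.s. event. This is the ``for all sufficiently large $N$'' clause in the statement. Concretely, $\widehat A_N$ is nonsingular once $\nor{\widehat A_N-A}_{\op}<\nor{A^{-1}}_{\op}^{-1}$, by a Neumann-series argument.

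On that event, continuity of inversion gives $\Ghat_N=\alphat^2\widehat A_N^{-1}\to\alphat^2A^{-1}=\Psiop(H(y,t))$. This equals $\Gstar(y,t)$ by \cref{thm:lfgi}.

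The only delicate point is the integrability bookkeeping. Only the weighted moment $\E_{p_0}[\KOU\nor{H_0}]$ is assumed, not $\E_{p_0}\nor{H_0}$, so the law of large numbers must be applied to the product $\KOU H_0$ rather than to $H_0$ itself. We must also check that the ratio's limit is exactly the conditional expectation under $\rhoOU$, which is where $0<p_t(y)<\infty$ enters. No rate or variance assumption is needed for this almost-sure statement.
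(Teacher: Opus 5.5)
Your proposal is correct and follows essentially the same route as the paper's proof. You write $\Hhat_N$ as a ratio of two empirical means and apply the strong law to the weighted numerator $\KOU H_0$ and to the denominator, whose limit $p_t(y)>0$ identifies the ratio's limit as $H(y,t)$; you then use continuity of inversion on the open set of nonsingular matrices to get $\AhatN$ eventually invertible and $\Ghat_N\to\alphat^2A^{-1}=\Gstar$. Your extra bookkeeping (entrywise integrability, the bounded kernel, the explicit Neumann radius, and citing \cref{thm:lfgi} to identify the limit gate) makes explicit what the paper leaves implicit.
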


Proof deferred to \cref{app:proof-thm-snis-lfgi-consistency}.

At finite reference size, the Hessian-average, gate, and score estimators are built from self-normalized weighted averages. Therefore finite-reference Hessian averages and score-bank averages are generally biased at order $O(N^{-1})$ for each bank, while the corresponding Hessian-average, gate, and score estimators are consistent under the stated moment conditions.  Independent score and gate banks remove the additional finite-reference coupling that would arise from applying a random gate to the same reference samples used to construct it.

The finite-reference score estimator is consistent because the score-bank averages converge to $s_t(y)$ and $0$, while the gate-bank estimator converges to $\Gstar(y,t)$.  Under the hypotheses of \cref{thm:snis-lfgi-consistency} and the corresponding first-moment conditions for $b$ and $c$, as $\Ng\to\infty$ and $\Ns\to\infty$,
\[
    \widehat b^{\rm s}(y,t)\to s_t(y),
    \qquad
    \widehat c^{\rm s}(y,t)\to s_t(y),
    \qquad
    \widehat\delta^{\rm s}(y,t)\to0
\]
almost surely, and therefore
\[
    \sLFGI(y,t)\to s_t(y)
\]
almost surely.  At a fixed $(y,t)$, the empirical disagreement average $\widehat\delta^{\rm s}$ vanishes as the score bank grows.  Gate accuracy matters at finite reference count because it controls finite-reference variance and risk, which is why the finite-reference count analysis below focuses on risk capture rather than only on asymptotic consistency.

The preceding limits give $\widehat G_N(y,t)\to\Gstar(y,t)$ and $\widehat s_{\rm LFGI}(y,t)\to s_t(y)$.  For sampling, however, the relevant question is finite-reference stability: how much Hessian averaging error can the resolvent tolerate before the risk benefit of the population gate is lost?

\subsection{Gate stability under Hessian averaging error}

Writing
\[
    \Delta_H=\Hhat_N-H,
    \qquad
    A=\alphat^2I_d+\gammat H,
    \qquad
    \AhatN=\alphat^2I_d+\gammat\Hhat_N,
\]
the standard resolvent identity gives the ordinary operator-norm perturbation estimate
\[
    \nor{\Ghat_N-\Gstar}_{\op}
    \le
    \alphat^2\gammat
    \nor{\AhatN^{-1}}_{\op}
    \nor{\Delta_H}_{\op}
    \nor{A^{-1}}_{\op}
\]
whenever $A$ and $\AhatN$ are nonsingular.  In particular, if
\[
    \gammat\nor{\Delta_H}_{\op}
    \le
    \theta_A\,\lambda_{\min}(A),
    \qquad
    0<\theta_A<1,
\]
then $\nor{\Ghat_N-\Gstar}_{\op}/\nor{\Gstar}_{\op}\le\theta_A/(1-\theta_A)$.  This bound is useful but still expressed in an ordinary operator norm.  \Cref{prop:risk-weighted-lfgi-perturbation} bounds the excess risk in \cref{eq:excess-risk} by the Hessian-average error $\Delta_H=\Hhat_N-H$.

\begin{proposition}[Risk-weighted LFGI perturbation]
\label[proposition]{prop:risk-weighted-lfgi-perturbation}
Assume $A(y,t)\succ0$ and define
\[
    \Delta_H=\Hhat_N-H,
    \qquad
    \epsilon_H
    :=
    \gammat\nor{A^{-1/2}\Delta_HA^{-1/2}}_{\op}.
\]
If $\epsilon_H<1$, then $\AhatN\succ0$ and
\begin{equation*}
    \mathcal R(\Ghat_N;y,t)-\mathcal R(\Gstar;y,t)
    \le
    \alphat^4
    \left(\frac{\epsilon_H}{1-\epsilon_H}\right)^2
    \nor{A^{-1}}_{\op}\,\tr\left(A^{-1}\Cdd\right).
\end{equation*}
In particular, if $\epsilon_H\le1/2$, then
\begin{equation*}
    \mathcal R(\Ghat_N;y,t)-\mathcal R(\Gstar;y,t)
    \le
    4\alphat^4\epsilon_H^2
    \nor{A^{-1}}_{\op}\,\tr\left(A^{-1}\Cdd\right).
\end{equation*}
\end{proposition}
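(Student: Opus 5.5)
The plan is to compute the gate error exactly through a congruence factorization of $\AhatN$ around $A$, and then insert it into the excess-risk identity \cref{eq:excess-risk} of \cref{prop:residual-risk}. That identity says that for any gate,
\[
    \mathcal R(\Ghat_N;y,t)-\mathcal R(\Gstar;y,t)=\tr\bigl((\Ghat_N-\Gstar)\Cdd(\Ghat_N-\Gstar)^\top\bigr),
\]
and by \cref{thm:lfgi} we have $\Gstar=\alphat^2A^{-1}$. So everything reduces to controlling $\Ghat_N-\Gstar=\alphat^2(\AhatN^{-1}-A^{-1})$ in a form adapted to the weight $\Cdd$.

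\textbf{Step 1: factorization and invertibility.} Set
\[
    E:=\gammat A^{-1/2}\Delta_HA^{-1/2}.
\]
This matrix is symmetric because $H_0$ is symmetric, so both $H$ and $\Hhat_N$ are symmetric, and its operator norm is exactly $\epsilon_H$. Then
\[
    \AhatN=A+\gammat\Delta_H=A^{1/2}(I_d+E)A^{1/2}.
\]
Since $\nor{E}_{\op}=\epsilon_H<1$, the matrix $I_d+E$ is positive definite. Hence $\AhatN\succ0$ by congruence, and $\AhatN^{-1}=A^{-1/2}(I_d+E)^{-1}A^{-1/2}$.

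\textbf{Step 2: exact form of the gate error.} Using the resolvent identity $\AhatN^{-1}-A^{-1}=-\AhatN^{-1}(\gammat\Delta_H)A^{-1}$ and substituting the factorization, I expect
\[
    \Ghat_N-\Gstar=-\alphat^2A^{-1/2}MA^{-1/2},\qquad M:=(I_d+E)^{-1}E.
\]
A Neumann-series bound then gives $\nor{M}_{\op}\le\epsilon_H/(1-\epsilon_H)$.

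\textbf{Step 3: trace inequality.} Put $X:=A^{-1/2}M$ and $S:=A^{-1/2}\Cdd A^{-1/2}\succeq0$. The excess risk becomes $\alphat^4\tr(XSX^\top)$. For positive semidefinite $S$ the standard bound is $\tr(XSX^\top)\le\nor{X}_{\op}^2\tr(S)$. Two further facts close the estimate:
\[
    \tr(S)=\tr(A^{-1}\Cdd),\qquad
    \nor{X}_{\op}^2\le\nor{A^{-1/2}}_{\op}^2\nor{M}_{\op}^2=\nor{A^{-1}}_{\op}\nor{M}_{\op}^2.
\]
Combining them yields the first displayed bound of the proposition. The case $\epsilon_H\le1/2$ follows because then $\epsilon_H/(1-\epsilon_H)\le2\epsilon_H$.

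\textbf{Main obstacle.} The only delicate point is the bookkeeping in Step 2. I must check that the $A^{1/2}$ factors cancel so that the error is sandwiched symmetrically as $A^{-1/2}(\cdot)A^{-1/2}$; a one-sided form would not give the stated bound. I must also confirm that the trace bound is applied with the positive semidefinite factor $S$ in the middle. Everything else is routine.
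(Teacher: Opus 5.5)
Your proposal is correct and follows the paper's proof essentially step for step. Both use the congruence factorization $\AhatN=A^{1/2}(I_d+E)A^{1/2}$ and the resolvent identity, which together give the sandwiched error $-\alphat^2A^{-1/2}(I_d+E)^{-1}EA^{-1/2}$, followed by the bound $\nor{(I_d+E)^{-1}E}_{\op}\le\epsilon_H/(1-\epsilon_H)$. The only cosmetic difference is in the final estimate: the paper writes the excess risk as $\nor{(\Ghat_N-\Gstar)\Cdd^{1/2}}_F^2$, applies Frobenius submultiplicativity, and uses $\nor{A^{-1/2}\Cdd^{1/2}}_F^2=\tr(A^{-1}\Cdd)$, which is exactly your inequality $\tr(XSX^\top)\le\nor{X}_{\op}^2\tr(S)$.
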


Proof deferred to \cref{app:proof-prop-risk-weighted-lfgi-perturbation}.

The perturbation bound controls $R(\widehat G_N;y,t)-R(\Gstar;y,t)$ through $\epsilon_H$ and the trace factor $\nor{A^{-1}}_{\op}\tr(A^{-1}\Cdd)$.  The gate need not be learned accurately in every matrix direction; it must be learned in the $\Cdd$-weighted excess-risk norm after relative Hessian preconditioning by $A^{-1/2}$.  This bound is the input to the sample-size condition in \cref{sec:reference-complexity}.

\section{Reference Complexity and Hessian Conditioning}
\label{sec:reference-complexity}

For Matrix Blend, which uses no Hessians, \cref{eq:centered-regression-residual-risk} shows that the finite-reference error is controlled by $\widehat C_{r_\star\delta}\widehat C_{\delta\delta}^{-1}\Cdd^{1/2}$.  For finite-reference LFGI, the estimated quantity is the conditional Hessian average $H(y,t)$.  We ask when $\mathcal E(\widehat G_N;y,t)\le\eta\,\Gainstar(y,t)$, where $\Gainstar=R(0;y,t)-R(\Gstar;y,t)$ is the population-optimal risk reduction defined in \cref{def:population-risk-reduction}.  LFGI estimates the OU-conditional observed information through the averaged shifted matrix
\[
    A(y,t)=\alphat^2I_d+\gammat H(y,t).
\]
We call a query $(y,t)$ \emph{pole-separated} when the averaged shifted matrix $A(y,t)$ is uniformly separated from singularity on directions with non-negligible weight in the $\Cdd$-weighted excess-risk norm.  In the PSD case $H(y,t)\succeq0$, this separation is automatic because $A(y,t)\succeq\alphat^2 I_d$.  The favorable LFGI condition is therefore not simply ``PSD because PSD is safer.''  It is \emph{the inequality in \cref{eq:lfgi-risk-capture-condition}}: Hessian averaging error is small after preconditioning by $A^{-1/2}$, and the remaining amplification is measured by the pole factor $\Lambdapole=\nor{A^{-1}}_{\op}\tr(A^{-1}\Cdd)$.  The sufficient-condition comparison depends on two finite-reference quantities: residual-coupled covariance inversion for Matrix Blend and relative Hessian concentration for LFGI.  It is not a calibrated predictor of empirical advantage.  For compact notation, define the conservative risk-weighted pole factor
\begin{equation}
\label{eq:Lambda-pole}
    \Lambdapole(y,t)
    :=
    \nor{A(y,t)^{-1}}_{\op}\,\tr\left(A(y,t)^{-1}\Cdd(y,t)\right).
\end{equation}

Combining \cref{prop:risk-weighted-lfgi-perturbation} with the definition of $\Gainstar=R(0;y,t)-R(\Gstar;y,t)$ gives the following risk-capture condition.  For $0<\eta<1$, if
\begin{equation}
\label{eq:lfgi-risk-capture-condition}
    \alphat^4
    \left(\frac{\epsilon_H}{1-\epsilon_H}\right)^2
    \Lambdapole(y,t)
    \le
    \eta\Gainstar(y,t),
\end{equation}
then
\begin{equation*}
    \mathcal R(0;y,t)-\mathcal R(\Ghat_N;y,t)
    \ge
    (1-\eta)\Gainstar(y,t).
\end{equation*}
Thus the finite-reference LFGI gate captures at least a $(1-\eta)$ fraction of the population-optimal risk reduction whenever the perturbation term in \eqref{eq:lfgi-risk-capture-condition} is at most $\eta\Gainstar(y,t)$.

The constant-Hessian case provides the exact tie between the two matrix estimators.  When $H_0$ is constant, LFGI has no Hessian-averaging error and centered primal regression has the Gaussian pointwise cancellation from \cref{prop:gaussian-centered-cancellation}.  Both matrix estimators therefore recover the same gate; the finite-reference statement is collected in \cref{subsec:constant-hessian-gaussian-tie}.  The comparison between LFGI and centered primal regression is consequently a non-Gaussian finite-reference question, not a Gaussian one.

\subsection{Variable-Hessian targets, PSD regimes, and multiplicative stability}

For non-Gaussian targets, the Hessian is not constant, and LFGI must estimate an OU-conditional Hessian average.  The relevant concentration scale is not the raw variation of $H_0(X)$; it is the variation after preconditioning by $A=\alphat^2I_d+\gammat H$.  The relevant finite-reference quantity is the relative Hessian fluctuation $\gamma_t\nor{A^{-1/2}(\widehat H_N-H)A^{-1/2}}_{\op}$, which replaces the residual-coupled centered-regression term described in \cref{subsec:non-gaussian-residual}.

\begin{assumption}[Relative conditional Hessian concentration]
\label[assumption]{ass:hessian-concentration}
At fixed $(y,t)$, suppose $A\succ0$ and the finite-reference Hessian average satisfies, with probability at least $1-\delta$,
\begin{equation}
\label{eq:relative-hessian-concentration}
    \epsilon_H
    =
    \gammat\nor{A^{-1/2}(\Hhat_N-H)A^{-1/2}}_{\op}
    \le
    C\left(
        \sqrt{\frac{v_A^2\log(2d/\delta)}{N_{\eff}(y,t)}}
        +
        \frac{R_A\log(2d/\delta)}{N_{\eff}(y,t)}
    \right),
\end{equation}
where $v_A^2$ and $R_A$ are dimensionless variance and envelope parameters for the relative Hessian fluctuations
\[
    \gammat A^{-1/2}(H_0(X)-H)A^{-1/2},
    \qquad X\sim p_0(\cdot\mid Y_t=y).
\]
For iid conditional samples, $N_{\eff}=N$.  For OU-SNIS reference banks, $N_{\eff}$ denotes the local effective sample size
\begin{equation*}
    N_{\eff}(y,t)
    =
    \frac{1}{\sum_{i=1}^{N}w_i(y,t)^2}.
\end{equation*}
\end{assumption}

The iid conditional-sample version of \cref{ass:hessian-concentration} is a standard matrix Bernstein statement and is proved in \cref{app:proof-hessian-concentration-bernstein} \citep{tropp2012user,vershynin2018high,wainwright2019high}.  The self-normalized OU reference-bank version used in implementations is kept as the explicit effective-sample-size hypothesis in \cref{ass:hessian-concentration}.  We do not claim a sharp standalone SNIS matrix Bernstein theorem beyond that hypothesis.

A simple sufficient condition makes the relative fluctuation parameters in \cref{ass:hessian-concentration} interpretable.  Suppose $H(y,t)\succeq0$ and, for some $\beta_H\ge0$,
\[
    (1-\beta_H)H(y,t)
    \preceq
    H_0(X)
    \preceq
    (1+\beta_H)H(y,t)
    \qquad
    \text{for }X\sim p_0(\cdot\mid Y_t=y)\text{ a.s.}
\]
Then the relative Hessian fluctuation
\[
    \Xi_H(X)
    :=
    \gammat A(y,t)^{-1/2}
    \bigl(H_0(X)-H(y,t)\bigr)
    A(y,t)^{-1/2}
\]
satisfies $\|\Xi_H(X)\|_{\op}\le\beta_H$ almost surely.  Indeed, conjugating
$-\beta_H H\preceq H_0(X)-H\preceq\beta_H H$ by $\gammat^{1/2}A^{-1/2}$ and using
$0\preceq\gammat A^{-1/2}H A^{-1/2}\preceq I_d$ places every eigenvalue of $\Xi_H(X)$ in $[-\beta_H,\beta_H]$.  Thus one may take $R_A\le\beta_H$ and $v_A^2\le\beta_H^2$ in \cref{ass:hessian-concentration}; this relative concentration scale is independent of the condition number of $H(y,t)$.

\begin{theorem}[Risk-weighted sample complexity for LFGI]
\label[theorem]{thm:hessian-reference-condition}
Assume \cref{ass:hessian-concentration}, let $0<\eta<1$, and ignore the lower-order linear Bernstein term for readability.  For OU-SNIS banks, \cref{ass:hessian-concentration} is used here as the explicit effective-sample-size hypothesis.  A sufficient condition for finite-reference LFGI to capture at least a $(1-\eta)$ fraction of the population-optimal risk reduction with probability at least $1-\delta$ is
\begin{equation}
\label{eq:hessian-reference-sufficient}
    N_{\eff}(y,t)
    \gtrsim
    \frac{
        \alphat^4 v_A^2\Lambdapole(y,t)
    }{
        \eta\Gainstar(y,t)
    }
    \log\frac{2d}{\delta},
\end{equation}
together with the small-perturbation condition $\epsilon_H\le1/2$.  Including the Bernstein envelope term gives the sufficient condition
\begin{equation}
\label{eq:hessian-reference-sufficient-full}
    C\left(
        \sqrt{\frac{v_A^2\log(2d/\delta)}{N_{\eff}(y,t)}}
        +
        \frac{R_A\log(2d/\delta)}{N_{\eff}(y,t)}
    \right)
    \le
    \frac12
    \min\left\{
        1,
        \sqrt{\frac{\eta\Gainstar(y,t)}{4\alphat^4\Lambdapole(y,t)}}
    \right\}.
\end{equation}
\end{theorem}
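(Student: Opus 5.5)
The argument combines three earlier results: the risk-weighted perturbation bound of \cref{prop:risk-weighted-lfgi-perturbation}, the capture condition \cref{eq:lfgi-risk-capture-condition}, and the high-probability concentration bound of \cref{ass:hessian-concentration}. We work on the event $\mathcal{A}_\delta$ of probability at least $1-\delta$ on which \cref{eq:relative-hessian-concentration} holds, and argue deterministically on that event.

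\emph{Step 1: the full condition \cref{eq:hessian-reference-sufficient-full}.} Assume it holds. On $\mathcal{A}_\delta$ this gives $\epsilon_H\le\tfrac12$, so $\epsilon_H<1$ and \cref{prop:risk-weighted-lfgi-perturbation} applies. Its second display yields
\[
\mathcal R(\Ghat_N;y,t)-\mathcal R(\Gstar;y,t)\le 4\alphat^4\epsilon_H^2\Lambdapole(y,t).
\]
The second branch of the minimum in \cref{eq:hessian-reference-sufficient-full} gives $\epsilon_H^2\le \eta\Gainstar/(16\alphat^4\Lambdapole)$. Hence the excess risk is at most $\eta\Gainstar/4\le\eta\Gainstar$.

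Equivalently, since $(\epsilon_H/(1-\epsilon_H))^2\le4\epsilon_H^2$ when $\epsilon_H\le\tfrac12$, the condition \cref{eq:lfgi-risk-capture-condition} holds. Writing $\mathcal R(0)-\mathcal R(\Ghat_N)=\Gainstar-\mathcal E(\Ghat_N)\ge(1-\eta)\Gainstar$ then gives the capture conclusion. The factor $\tfrac12$ in front of the minimum only adds slack.

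\emph{Step 2: the simplified condition \cref{eq:hessian-reference-sufficient}.} Drop the linear Bernstein term, so $\epsilon_H\lesssim\sqrt{v_A^2\log(2d/\delta)/N_{\eff}}$, and keep the separate hypothesis $\epsilon_H\le\tfrac12$. The requirement from Step 1 becomes $4\alphat^4\epsilon_H^2\Lambdapole\le\eta\Gainstar$. Substituting the concentration bound, it suffices that
\[
4C^2\alphat^4\,\frac{v_A^2\log(2d/\delta)}{N_{\eff}}\,\Lambdapole\le\eta\Gainstar.
\]
Rearranging gives \cref{eq:hessian-reference-sufficient}, with the absolute constant $4C^2$ absorbed into $\gtrsim$.

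\emph{Main obstacle.} The algebra above is routine. The care lies in the probabilistic bookkeeping. The statement is conditional on \cref{ass:hessian-concentration}, so it makes no SNIS-specific claim beyond that hypothesis, and one must use the same $1-\delta$ event for both the $\epsilon_H\le\tfrac12$ check and the excess-risk bound. In the full version, one should also note that the first branch of the minimum is exactly what guarantees $\AhatN\succ0$ and the validity of \cref{prop:risk-weighted-lfgi-perturbation}.
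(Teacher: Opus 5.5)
Your proposal is correct and follows the paper's own proof. On the concentration event, $\epsilon_H\le\tfrac12$ activates the bound $4\alphat^4\epsilon_H^2\Lambdapole$ from \cref{prop:risk-weighted-lfgi-perturbation}, the second branch of the minimum pushes this below $\eta\Gainstar$, and dropping the linear Bernstein term and solving for $N_{\eff}$ gives \cref{eq:hessian-reference-sufficient}; you additionally make explicit the slack from the leading $\tfrac12$ and that a single $1-\delta$ event supports both checks, which the paper leaves implicit.
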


Proof deferred to \cref{app:proof-thm-hessian-reference-condition}.

The theorem should be read as a sufficient condition, not as a complete characterization of all successful cases.  It identifies the quantities that control the finite-reference excess risk of LFGI: relative Hessian concentration, the pole factor $\Lambdapole$, and the population-optimal risk reduction $\Gainstar$ that must dominate the perturbation.  In PSD or pole-separated regimes, the trace factor in \cref{eq:Lambda-pole} cancels exactly:
\[
    \tr\left(A(y,t)^{-1}\Cdd(y,t)\right)
    =
    \frac{d}{\gammat\alphat^2},
    \qquad
    \Lambdapole(y,t)
    =
    \frac{d}{\gammat\alphat^2}
    \|A(y,t)^{-1}\|_{\op}.
\]
If $H(y,t)\succeq-\mu_t(y)I_d$ and $a_t(y):=\alphat^2-\gammat\mu_t(y)>0$, then $A(y,t)\succeq a_t(y)I_d$ and hence
\[
    \Lambdapole(y,t)
    \le
    \frac{d}{\gammat\alphat^2a_t(y)}.
\]
In the sufficiently PSD case $H\succeq0$, this gives
\[
    \alphat^4\Lambdapole(y,t)
    \le
    \frac{d}{\gammat}.
\]
Thus PSD or pole-separated targets do not pay for raw stiff curvature through $\tr(\Cdd)$ in the LFGI perturbation bound.  The resolvent cancels that stiffness before the remaining operator-norm pole factor is applied.

Combining the PSD cancellation with the multiplicative-stability scale $v_A^2\le\beta_H^2$ and $R_A\le\beta_H$ gives a compact sufficient condition.  Ignoring the lower-order Bernstein envelope term, finite-reference LFGI captures at least a $(1-\eta)$ fraction of the population-optimal risk reduction with probability at least $1-\delta$ whenever
\[
    N_{\eff}(y,t)
    \gtrsim
    \frac{\beta_H^2 d}{\eta\,\gammat\,\Gainstar(y,t)}
    \log\frac{2d}{\delta},
\]
together with the small-perturbation condition $\epsilon_H\le1/2$.  Including the Bernstein envelope term, it is sufficient that
\[
    C\left(
        \sqrt{\frac{\beta_H^2\log(2d/\delta)}{N_{\eff}(y,t)}}
        +
        \frac{\beta_H\log(2d/\delta)}{N_{\eff}(y,t)}
    \right)
    \le
    \frac12
    \min\left\{
        1,
        \sqrt{\frac{\eta\,\gammat\,\Gainstar(y,t)}{4d}}
    \right\}.
\]
The constant-Hessian case corresponds to $\beta_H=0$, recovering the exact calculation in \eqref{eq:constant-hessian-lfgi-recovery}: LFGI has $\widehat G_N=\Gstar$ from any nonempty Hessian bank, and centered primal regression recovers $\Gstar$ on the empirical disagreement span.  When $\beta_H=O(1)$, the bound no longer claims a special learnability advantage from Hessian stability alone; the remaining denominator $\Gainstar(y,t)$ records whether there is meaningful population-optimal risk reduction to capture.

The resulting finite-reference comparison is between the residual-coupled Matrix Blend term $\widehat C_{r_\star\delta}\widehat C_{\delta\delta}^{-1}\Cdd^{1/2}$ from \cref{eq:centered-regression-residual-risk} and the relative Hessian-average error $\epsilon_H$ from \cref{eq:relative-hessian-concentration}.  The auxiliary comparison in \cref{subsec:residual-coupling-regime} records the corresponding sufficient-condition guide and the GMM-side Hessian calculation.  The validation sweep therefore checks score RMSE and the risk-weighted gate-capture diagnostic directly rather than fitting the sufficient-condition ratio.

\section{Probability-Flow Density Evaluation and Evidence Diagnostics}

\label{sec:pf-density-evaluation-body}

A fixed finite-reference score estimator also defines a normalized density through the probability-flow ODE: after the bank is frozen, the estimated reverse vector field induces a deterministic change of variables from the Gaussian base to the low-noise endpoint.

\subsection{Setting: what a pilot run does and does not provide}
\label{subsec:density-setting}
A Bayesian inverse problem supplies an unnormalized posterior
\[
    \tildep(x)=\exp\{-\Phi(x)\}\,p_{\rm prior}(x),
    \qquad
    Z=\int \tildep(x)\,\dd x,
    \qquad
    p_0(x)=\tildep(x)/Z .
\]
An adjoint-gradient MALA pilot produces samples $x_i$ together with byproducts already computed during the run: $\log\tildep(x_i)$, $s_0(x_i)=\nabla\log\tildep(x_i)$, and either $H_0(x_i)$ or a structured precision proxy such as $P_i^{\GN}$.  The pilot is therefore a strong source of samples, but it does not by itself provide a normalized pointwise density, $\log Z$, or proposal/target overlap certificates.  The purpose of the fixed finite-reference LFGI score estimator is to convert these MCMC byproducts into those density-level objects.

\begin{definition}[Pilot bank for density evaluation]
\label[definition]{def:pilot-bank-density}
A density-evaluation bank is a tuple
\[
    \bank=\{x_i,\ell_i,s_i,P_i,a_i\}_{i=1}^N,
    \qquad
    \ell_i=\log\tildep(x_i),\quad s_i=s_0(x_i),
\]
where $P_i$ is either $H_0(x_i)$ or a measurable PSD proxy, and $a_i$ are optional external log weights.  The default MALA-pilot application uses $a_i\equiv0$ for the score/gate bank and reserves a disjoint MALA-EVAL bank for density assessment.
\end{definition}

The density-evaluation setup uses bank splitting for the same reason the finite-reference theory separates score and gate banks.  The finite-reference theory in \cref{sec:finite-reference} uses independent score and gate banks as a clean sufficient condition.  The density-evaluation setup in \cref{subsec:density-experiments,app:pilot-split-setup} allows shared, prefix, or independent score/gate banks, while reported $\log\qpf$ diagnostics are evaluated on held-out MALA-EVAL samples.  This split-bank construction removes in-sample calibration artifacts; tied-bank variants are reported only as cost or ablation diagnostics.

\subsection{The probability-flow density of a fixed finite-reference score estimator}
\label{subsec:pf-density-frozen-field}

Once the bank and score rule are fixed, the reverse probability-flow ODE is an ordinary deterministic flow.  Its endpoint law is therefore an explicit normalized surrogate density.

\begin{definition}[Probability-flow law induced by a fixed score estimator]
\label[definition]{def:frozen-pf-surrogate}
Fix a pilot bank $\bank$ and a finite-reference score estimator $\widehat s(\cdot,t)$, for example Tweedie, scalar blend, matrix blend, or LFGI.  Let $\qpf$ be the law at time $\tmin$ obtained by integrating the deterministic reverse-OU probability-flow ODE initialized from the standard Gaussian base law $\phibase$ at $\tmax$.  For a query $x$, $\log\qpf(x)$ is evaluated by integrating the inverse flow from $x$ to $\tmax$ and accumulating the divergence by Liouville's formula.
\end{definition}

By the standard Liouville change-of-variables formula for invertible ODE flows \citep{chen2018neuralode,grathwohl2019ffjord,song2021score}, suppose $v_t(x)=-x-\widehat s(x,t)$ is continuously differentiable and has an invertible flow from $\tmin$ to $\tmax$.  If $z_t$ solves
\[
    \dot z_t=v_t(z_t),\qquad z_{\tmin}=x,
\]
and, with $d$ denoting the ambient dimension in $\R^d$, the implementation accumulator is
\[
    \mathcal A_{\tmax}(x)=\int_{\tmin}^{\tmax}\bigl(d+\nabla\!\cdot\widehat s(z_t,t)\bigr)\,\dd t,
\]
then
\begin{equation}
\label{eq:pf-change-of-variables-density}
    \log\qpf(x)=\log\phibase(z_{\tmax})-\mathcal A_{\tmax}(x).
\end{equation}
Consequently $\qpf$ is normalized by construction: fixed-score-estimator error changes which normalized density is produced, but it does not introduce a separate unknown normalizing constant.  No integrability or zero-curl assumption on $\widehat s$ is required.

\subsection{Closed-form divergence for finite-reference LFGI}
\label{subsec:lfgi-density-divergence}

The fixed probability-flow density requires the divergence of the frozen score field along the inverse flow.  For finite-reference LFGI this divergence can be evaluated by differentiating the OU weights and the Hessian-resolvent gate directly.

\begin{proposition}[Closed-form divergence of the finite-reference LFGI score estimator]

\label[proposition]{prop:closed-form-lfgi-divergence-density}
For a finite-reference LFGI score estimator
write $w_i(y,t)$ for the normalized OU weights on the score-estimator bank, $P_i=H_0(X_i)$, $\widehat H=\sum_i w_iP_i$, $\widehat b=\sum_i w_i b_i$, $\widehat c=\sum_i w_i c_i$, and $\widehat\delta=\widehat c-\widehat b$.  Define the centered quantities $\Delta b_i=b_i-\widehat b$ and $\Delta P_i=P_i-\widehat H$, and define $\widehat C_{bb}=\sum_i w_i\Delta b_i\Delta b_i^\top$ and $\widehat C_{cb}=\sum_i w_i(c_i-\widehat c)\Delta b_i^\top$. With
\[
    \widehat s(y,t)=\widehat b(y,t)+\widehat G(y,t)\widehat\delta(y,t),
    \qquad
    \widehat G=\alphat^2(\alphat^2 I_d+\gammat\widehat H)^{-1},
\]
let
\[
    J_b=\widehat C_{bb}-\gammat^{-1}I_d,
    \qquad
    J_{\widehat\delta}=\widehat C_{cb}-J_b,
    \qquad
    T_{auv}=\sum_i w_i(\Delta b_i)_a(\Delta P_i)_{uv}.
\]
Then
\[
    \nabla\!\cdot\widehat s
    =\tr(J_b)+\langle \widehat G,J_{\widehat\delta}^\top\rangle_F
      -\frac{\gammat}{\alphat^2}
      \sum_{a,u,v}\widehat G_{au}\,T_{auv}\,(\widehat G\widehat\delta)_v .
\]
For independent score and gate banks, the same differentiation rule applies with the corresponding bank averages separated.  The formula costs $O(Nd^2)$ moment accumulation plus $O(d^3)$ resolvent work per query time step and uses no Hutchinson trace estimator in the LFGI density path.
\end{proposition}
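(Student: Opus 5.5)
The plan is a direct differentiation of $\widehat s(y,t)=\widehat b+\widehat G\widehat\delta$ in $y$ at fixed $t$, followed by taking the trace. The key fact is that every $y$-dependence of the finite-reference averages enters through the OU weights, plus an explicit linear dependence in $b_i$. The unnormalized log weight is $\log\widetilde w_i=-\nor{y-\alphat X_i}^2/(2\gammat)+\mathrm{const}$, so $\nabla_y\log\widetilde w_i=b_i$. Differentiating the self-normalization then gives the score-function rule
\[
    \partial_{y_a}w_i=w_i\bigl((b_i)_a-(\widehat b)_a\bigr)=w_i(\Delta b_i)_a .
\]
Consequently, for any bank quantity $f_i$, we get $\partial_{y_a}\sum_iw_if_i=\sum_iw_i(\Delta b_i)_a f_i+\sum_iw_i\partial_{y_a}f_i$. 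We may replace $f_i$ by $f_i-\bar f$ in the first sum, since $\sum_iw_i(\Delta b_i)_a=0$. I will use the Jacobian convention $(J_f)_{ka}=\partial_{y_a}f_k$.

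Step one applies this rule to the three averages. First, $b_i$ depends on $y$ with $\partial_y b_i=-\gammat^{-1}I_d$, so $J_{\widehat b}=\widehat C_{bb}-\gammat^{-1}I_d=J_b$. Second, $c_i$ and $P_i$ do not depend on $y$. This gives $J_{\widehat c}=\widehat C_{cb}$ (centering $c_i$ is free) and hence $J_{\widehat\delta}=\widehat C_{cb}-J_b$. It also gives $\partial_{y_a}\widehat H=\sum_iw_i(\Delta b_i)_a\Delta P_i=:T_a$, the slice $(T_a)_{uv}=T_{auv}$.

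Step two differentiates the resolvent. Write $\widehat G=\alphat^2\widehat A^{-1}$ with $\widehat A=\alphat^2I_d+\gammat\widehat H$. Then $\partial_a\widehat A^{-1}=-\widehat A^{-1}(\gammat T_a)\widehat A^{-1}$, which yields $\partial_a\widehat G=-(\gammat/\alphat^2)\,\widehat G\,T_a\,\widehat G$. Here I assume $\widehat A$ is nonsingular at the query, as in the definition of the estimator.

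Step three assembles the divergence. We have
\[
    \nabla\!\cdot\widehat s=\sum_a\partial_a\widehat s_a
    =\tr(J_b)+\sum_a(\widehat G\,\partial_a\widehat\delta)_a+\sum_a\bigl((\partial_a\widehat G)\widehat\delta\bigr)_a .
\]
The middle sum is $\sum_{a,u}\widehat G_{au}(J_{\widehat\delta})_{ua}=\langle\widehat G,J_{\widehat\delta}^\top\rangle_F$. The last sum is $-(\gammat/\alphat^2)\sum_{a,u,v}\widehat G_{au}T_{auv}(\widehat G\widehat\delta)_v$, which is exactly the stated formula.

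For independent banks the same computation applies. Now $\widehat H$ uses the gate-bank weights, so $T$ is formed with gate-bank centered $\Delta b_i$, while $J_b$ and $J_{\widehat\delta}$ use score-bank moments. The cost count follows by inspection: $\widehat C_{bb}$ and $\widehat C_{cb}$ cost $O(Nd^2)$. The tensor $T$ is never formed; it is contracted as $\sum_iw_i(\widehat G\Delta P_i\widehat G\widehat\delta\text{-type contractions})$. Concretely, $u_i=\Delta P_i(\widehat G\widehat\delta)$ costs $O(d^2)$ per sample, and $\sum_a\widehat G_{a\cdot}u_i(\Delta b_i)_a$ then costs $O(d^2)$. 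The remaining resolvent and solve work is $O(d^3)$.

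The main obstacle is bookkeeping rather than analysis. One must keep the Jacobian index convention consistent, since it determines whether the Frobenius pairing is with $J_{\widehat\delta}$ or its transpose. One must also check that centering in $T$ (using $\Delta P_i$ rather than $P_i$) and in $\widehat C_{cb}$ is legitimate because the centered weight derivative sums to zero. I would verify the sign and transpose conventions on the Gaussian case, where $P_i$ is constant so $T=0$ and the formula should reduce to $\tr J_b+\langle\widehat G,J_{\widehat\delta}^\top\rangle_F$.
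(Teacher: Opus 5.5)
Your proposal is correct and follows essentially the same route as the paper's proof. Both arguments use the score-function rule $\partial_{y_a}w_i=w_i(\Delta b_i)_a$, the resulting Jacobians $J_b=\widehat C_{bb}-\gammat^{-1}I_d$, $J_{\widehat\delta}=\widehat C_{cb}-J_b$ and $T_a=\partial_{y_a}\widehat H$, the resolvent derivative $\partial_{y_a}\widehat G=-(\gammat/\alphat^2)\widehat G T_a\widehat G$, and then assemble the trace term by term. Beyond the paper's proof, you derive the weight derivative explicitly from $\nabla_y\log\widetilde w_i=b_i$, and you fix the Jacobian index convention, which is what justifies the transpose in $\langle\widehat G,J_{\widehat\delta}^\top\rangle_F$.
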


The proof is a direct SNIS differentiation and resolvent-calculus argument and is given in \cref{app:proof-closed-form-lfgi-divergence-density}.  For each coordinate $a$, $\partial_{y_a}w_i=w_i(\Delta b_i)_a$, giving $J_b=\partial_y\widehat b$, $J_{\widehat\delta}=\partial_y\widehat\delta$, and $T_a=\partial_{y_a}\widehat H$.  Differentiating $\widehat G=\alphat^2(\alphat^2I+\gammat\widehat H)^{-1}$ yields $\partial_{y_a}\widehat G=-(\gammat/\alphat^2)\widehat G T_a\widehat G$; tracing $\partial_y[\widehat b+\widehat G\widehat\delta]$ gives the stated formula.  The full index proof is in \cref{app:proof-closed-form-lfgi-divergence-density}.

Tweedie also admits an analytic divergence, and scalar blend can be evaluated with deterministic coordinate finite differences.  The density comparison is therefore primarily an estimator-accuracy comparison, where estimator accuracy means agreement of the fixed probability-flow energy with target posterior-energy diagnostics on held-out points rather than accuracy of the divergence formula alone.  The closed-form LFGI divergence is still essential for the application because it makes the LFGI density path exact with respect to the fixed finite-reference score estimator and avoids stochastic Hutchinson trace noise in the path likelihood.

Once the score estimator is fixed, probability-flow density evaluation consists of three steps.
\begin{enumerate}[leftmargin=2em,itemsep=0.2em]
\item Build a fixed finite-reference score estimator from a MALA pilot bank using one of four score-estimator constructions: the Tweedie score estimator, a scalar Tweedie--TSI score-estimator blend, a Matrix Blend gate, or an LFGI Hessian-resolvent gate.
\item For held-out evaluation states $x_j$, integrate the inverse probability-flow ODE over $[\tmin,\tmax]$ with Heun/trapezoidal accumulation of $d+\nabla\!\cdot\widehat s$.
\item Return $\log\qpf(x_j)$, estimated energies $\Epf(x_j)=-\log\qpf(x_j)$, pointwise log-ratio certificates $\log\tildep(x_j)-\log\qpf(x_j)$, and ESS diagnostics for the resulting correction weights.
\end{enumerate}


\subsection{From score risk to density diagnostics}
\label{subsec:density-guarantees}

After $\qpf$ has been constructed, the comparison with the unnormalized target is made through log ratios, evidence identities, and correction weights.

For any sufficiently regular fixed score estimator $\widehat s$, the probability-flow construction defines an exactly normalized density $\qpf$ intended to approximate the target density $p_0$ or posterior density under study.  The quality of $\qpf$, as measured by log-ratio diagnostics, KL-type controls, or held-out energy calibration, depends on how closely $\widehat s$ tracks the true OU marginal score along the path.  Score-error controls of this type underlie likelihood and probability-flow analyses of score-based and flow models \citep{song2021score,song2021maximum,lu2022maximum,chen2023probability,benton2023error}.  We do not elevate this transfer to a formal assumption because it is not invoked by a theorem below.  Instead, it is the working regularity condition used to interpret the density diagnostics in this section.

The working probability-flow transfer condition used to interpret these diagnostics is the following.  Let $p_t$ denote the OU marginal path of the target and let $\qpf_t$ be the probability-flow path induced by a fixed estimator $\widehat s(\cdot,t)$.  For the time interval and tail class used in the density-evaluation benchmarks of \cref{subsec:density-experiments,subsec:known-z-calibration}, suppose there exist a finite constant $C_{\rm PF}$ and a nonnegative weight $w(t)$ such that
\begin{equation}
\label{eq:pf-score-error-transfer}
    \KL\bigl(p_{\tmin}\,\|\,\qpf_{\tmin}\bigr)
    \le
    C_{\rm PF}\int_{\tmin}^{\tmax}
    w(t)\,\E_{p_t}\bigl\|\widehat s(Y_t,t)-\nabla\log p_t(Y_t)\bigr\|^2\,\dd t
    +\KL(p_{\tmax}\,\|\,\phibase).
\end{equation}
This condition is the probability-flow analogue of the usual drift-error control used in likelihood and transport analyses \citep{song2021maximum,lu2022maximum,chen2023probability,benton2023error}: the endpoint term records the finite $\tmax$ mismatch between the OU-smoothed target and the Gaussian base, while the integral records score error along the path.  Under \eqref{eq:pf-score-error-transfer}, the fixed-time excess-risk bound from \cref{prop:risk-weighted-lfgi-perturbation} and \eqref{eq:lfgi-risk-capture-condition} transfers to a normalized-density comparison after integration over time.  The conclusion is conditional on pilot coverage: it controls $\qpf$ on the region covered by the pilot/reference bank and does not assert discovery of posterior modes absent from the pilot.

\subsection{Log ratios, correction weights, and evidence identities}
\label{subsec:density-log-ratios-evidence}

For a held-out query $x$, the error bookkeeping starts from the log-ratio diagnostic:
\[
    \Delta_{\qpf}(x)=\log\tildep(x)-\log\qpf(x).
\]
If $\qpf=p_0$ exactly, then $\Delta_{\qpf}(x)=\log Z$ for every $x$.  In practice its variation over held-out posterior points reflects several error sources: fixed-score-estimator error, finite-bank SNIS fluctuation in the score estimator, numerical quadrature error in the probability-flow solve, low-time smoothing/truncation error, and any Hessian or Gauss--Newton proxy error.  We use this decomposition as an accounting device rather than as a separate theorem, and report the corresponding empirical diagnostics: energy rank correlation, affine slope, affine $R^2$, slope-normalized residual RMSE, pointwise log-density/log-ratio diagnostics, and ESS; see Appendix \cref{app:density-metrics}, Defs.~\ref{def:central-density}--\ref{def:evidence-diagnostics}.

The probability-flow density $\qpf$ induced by the fixed finite-reference score estimator can also be used through correction weights. When samples $Y_j\sim\qpf$ are drawn from this density, the usual self-normalized importance weights
\[
    W_j\propto \frac{\tildep(Y_j)}{\qpf(Y_j)}
\]
produce asymptotically exact posterior expectations for integrable test functions:
\[
    \E_{p_0}f
    =
    \frac{\E_{\qpf}\{f(Y)\tildep(Y)/\qpf(Y)\}}
         {\E_{\qpf}\{\tildep(Y)/\qpf(Y)\}}.
\]
The realized ESS of the importance weights $\tildep(Y)/\qpf(Y)$ is therefore an overlap diagnostic for using $\qpf$ as a proposal; see \cref{app:density-metrics}, Def.~\ref{def:evidence-diagnostics}.  On held-out MALA-pilot points, the same log ratios are used as pointwise density audits; they should be approximately constant if the surrogate density agrees with the posterior density up to the unknown normalizing constant.

The normalized probability-flow density $\qpf$ gives one evidence identity under the proposal $\qpf$ and another under the posterior $p_0$. If $Y\sim\qpf$, then
\[
    Z=\E_{\qpf}\left[\frac{\tildep(Y)}{\qpf(Y)}\right],
\]
whereas if $X\sim p_0$, then
\[
    \frac1Z=\E_{p_0}\left[\frac{\qpf(X)}{\tildep(X)}\right].
\]
When both posterior-pilot points and surrogate samples are available, these two identities can be combined with standard bridge-sampling estimators.  The average of $\log\tildep(x)-\log\qpf(x)$ over held-out posterior points is an exact $\log Z$ estimator only in the ideal case $\qpf=p_0$; otherwise it is a calibration diagnostic and a biased plug-in estimate whose bias is visible through the residual diagnostics above.

The reported density-calibration diagnostics are the affine calibration and correction-weight quantities in \cref{app:density-metrics}, Defs.~\ref{def:central-density}--\ref{def:evidence-diagnostics}: affine slope, affine $R^2$, slope-normalized RMSE, raw log-ratio scale, and ESS.  These diagnostics check whether $-\log\qpf$ is an affine proxy for $-\log\tildep$ on the evaluated posterior bulk.  The absolute evidence claim is checked separately on the known-$Z$ analytic inverse problems in \cref{subsec:known-z-calibration}: a Gaussian positive control and a four-component non-Gaussian mixture posterior.

\section{Fisher--Gauss--Newton Gates for PDE Inverse Problems}
\label{sec:gn-gates-body}
\subsection{Fisher--Gauss--Newton gates for inverse problems}
\label{subsec:density-gn-gates}

In PDE inverse problems, exact observed-information averages can be indefinite or expensive, while Fisher--Gauss--Newton precision proxies are PSD and often already available from adjoint calculations.  Using such a proxy in the LFGI gate trades exact Hessian averaging for a pole-separated curvature surrogate.

Concretely, consider a least-squares inverse problem with posterior energy
\[
    E(x):=-\log\tildep(x)
    =\frac12\|F(x)-y_{\rm obs}\|_{\Gamma^{-1}}^2
    +\frac12\|x-m_{\rm pr}\|_{C_{\rm pr}^{-1}}^2+\text{const}.
\]
Write the residual, forward Jacobian, and target observed information as
\[
    r(x)=F(x)-y_{\rm obs},
    \qquad J(x)=\nabla F(x),
    \qquad H_0(x)=-\nabla^2\log\tildep(x)=\nabla^2E(x).
\]
Then
\[
    H_0(x)=C_{\rm pr}^{-1}+J(x)^\top\Gamma^{-1}J(x)+R_{\rm nl}(x),
    \qquad
    R_{\rm nl}(x)=\sum_k [\Gamma^{-1}r(x)]_k\,\nabla^2F_k(x),
\]
where the sum is over observation coordinates.  Here and throughout this section, the Gauss--Newton Hessian approximation means the PSD prior-plus-linearized-likelihood precision proxy
\[
    P^{\GN}(x):=C_{\rm pr}^{-1}+J(x)^\top\Gamma^{-1}J(x),
\]
that is, the exact observed information with the residual-weighted second-derivative term $R_{\rm nl}$ dropped.  Given a gate bank, LFGI--GN replaces the raw Hessian average by
\[
    \widehat H_{\GN}(y,t)
    :=\sum_i w_i^{\rm g}(y,t)P^{\GN}(X_i^{\rm g}),
    \qquad
    \widehat G_{\GN}(y,t)
    :=\alphat^2
    \bigl(\alphat^2I_d+\gammat\widehat H_{\GN}(y,t)\bigr)^{-1}.
\]
The posterior score and energy remain those of the nonlinear target; the Gauss--Newton approximation is used only to construct the curvature gate.

\begin{lemma}[PSD conditioning for Gauss--Newton precision gates]
\label[lemma]{lem:gn-psd-conditioning}
Suppose the bank precision proxies satisfy $P_i^{\GN}\succeq\lambda_0 I_d$ for some $\lambda_0>0$.  Then their OU-SNIS average satisfies $\widehat H_{\GN}(y,t)\succeq\lambda_0 I_d$, the LFGI resolvent
\[
    \widehat A_{\GN}=\alphat^2 I_d+\gammat\widehat H_{\GN}
\]
satisfies $\widehat A_{\GN}\succeq (\alphat^2+\gammat\lambda_0)I_d$, and the gate spectrum lies in
\[
    0<\lambda(\widehat G_{\GN})\le
    \frac{\alphat^2}{\alphat^2+\gammat\lambda_0}<1.
\]
Thus Gauss--Newton gates require no regularization/projection to avoid resolvent poles, unlike raw indefinite Hessian averages off the posterior mode.
\end{lemma}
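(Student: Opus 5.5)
The plan is to prove the three claims in order, each resting only on the previous one and on elementary Loewner-order facts.

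\emph{Averaging step.} The OU-SNIS weights satisfy $w_i^{\rm g}(y,t)\ge0$ and $\sum_i w_i^{\rm g}(y,t)=1$. They are well defined whenever $\sum_j\widetilde w_j^{\rm g}>0$, which holds because $\KOU(y\mid\cdot)>0$ everywhere. For any unit vector $v$,
\[
v^\top\widehat H_{\GN}v=\sum_i w_i^{\rm g}\,v^\top P_i^{\GN}v\ge\lambda_0\sum_i w_i^{\rm g}=\lambda_0 .
\]
Hence $\widehat H_{\GN}\succeq\lambda_0I_d$. The same computation with the upper bound shows that $\widehat H_{\GN}$ is symmetric as a convex combination of symmetric matrices. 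The proxies $P^{\GN}=C_{\rm pr}^{-1}+J^\top\Gamma^{-1}J$ are symmetric by construction.

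\emph{Resolvent step.} Multiply by $\gammat>0$ (for $t>0$) and add $\alphat^2I_d$. This gives $\widehat A_{\GN}\succeq(\alphat^2+\gammat\lambda_0)I_d\succ0$. So $\widehat A_{\GN}$ is symmetric positive definite and invertible, and no pole can occur.

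\emph{Gate spectrum.} Diagonalize $\widehat H_{\GN}=U\diag(\mu_1,\ldots,\mu_d)U^\top$ with $\mu_j\ge\lambda_0$. Since $\widehat G_{\GN}$ is a function of $\widehat H_{\GN}$, it shares these eigenvectors, with eigenvalues
\[
\psi_t(\mu_j)=\frac{\alphat^2}{\alphat^2+\gammat\mu_j}.
\]
The map $\psi_t$ is positive and strictly decreasing on $[0,\infty)$. Therefore
\[
0<\psi_t(\mu_j)\le\psi_t(\lambda_0)=\frac{\alphat^2}{\alphat^2+\gammat\lambda_0},
\]
and the last quantity is strictly less than $1$ because $\gammat\lambda_0>0$.

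\emph{Remark on regularization.} The final sentence of the lemma follows from the second step. The resolvent is bounded below by a positive multiple of the identity, independently of the query $(y,t)$ and of the bank, so no ridge or projection is needed. By contrast, an indefinite average $\widehat H$ can make $\alphat^2+\gammat\mu_j$ vanish.

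The only point that needs care is keeping strictness. It requires $t>0$, so that $\gammat>0$ and $\alphat>0$, and it requires nondegenerate weights. All other steps are routine monotonicity of the scalar filter $\psi_t$.
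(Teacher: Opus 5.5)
Your proposal is correct and follows the paper's own proof: the Loewner-cone convex-combination bound gives $\widehat H_{\GN}\succeq\lambda_0 I_d$, the shift by $\alphat^2 I_d$ gives the resolvent bound, and spectral calculus on $\widehat G_{\GN}=\alphat^2\widehat A_{\GN}^{-1}$ gives the eigenvalue bound. You also note explicitly that the strict inequality $<1$ needs $t>0$ (so that $\gammat>0$), which the paper leaves implicit.
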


Proof deferred to \cref{app:proof-gn-proxy-density}.

\begin{proposition}[Proxy-Hessian gate perturbation]
\label[proposition]{prop:proxy-hessian-gate-density}
Let $H=\E[H_0(X_0)\mid Y_t=y]$, let $\widetilde H=H+\Delta$ be a proxy conditional precision, and define
\[
    A=\alphat^2 I_d+\gammat H,
    \qquad
    \widetilde A=\alphat^2 I_d+\gammat\widetilde H .
\]
If $A\succ0$ and
\[
    \varepsilon_{\rm prox}=\left\|\gammat A^{-1/2}\Delta A^{-1/2}\right\|_{\op}<1,
\]
then, for $\Gstar=\alphat^2A^{-1}$ and $\widetilde G=\alphat^2\widetilde A^{-1}$,
\[
    \|\widetilde G-\Gstar\|_{\op}
    \le
    \alphat^2\|A^{-1/2}\|_{\op}^2
    \frac{\varepsilon_{\rm prox}}{1-\varepsilon_{\rm prox}},
\]
and the corresponding local excess risk is bounded by the excess-risk identity in \cref{eq:excess-risk} with $\widetilde G-\Gstar$.
\end{proposition}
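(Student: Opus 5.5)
The bound follows from a resolvent identity written in the $A$-preconditioned frame, after which a Neumann-type bound applies. Set $E:=\gammat A^{-1/2}\Delta A^{-1/2}$, which is symmetric because $\Delta=\widetilde H-H$ is the difference of two symmetric precisions, and which satisfies $\|E\|_{\op}=\varepsilon_{\rm prox}<1$. Factor
\[
    \widetilde A=A+\gammat\Delta=A^{1/2}(I_d+E)A^{1/2}.
\]
Since $\|E\|_{\op}<1$, the matrix $I_d+E$ is invertible and, being symmetric, satisfies $I_d+E\succeq(1-\varepsilon_{\rm prox})I_d\succ0$. Hence $\widetilde A\succ0$ is invertible, $\widetilde G$ is well defined, and
\[
    \widetilde A^{-1}=A^{-1/2}(I_d+E)^{-1}A^{-1/2}.
\]

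Next, write the gate difference in the same frame:
\[
    \widetilde G-\Gstar=\alphat^2\bigl(\widetilde A^{-1}-A^{-1}\bigr)
    =\alphat^2A^{-1/2}\bigl[(I_d+E)^{-1}-I_d\bigr]A^{-1/2}
    =-\alphat^2A^{-1/2}(I_d+E)^{-1}E\,A^{-1/2}.
\]
Submultiplicativity then gives
\[
    \|\widetilde G-\Gstar\|_{\op}\le\alphat^2\,\|A^{-1/2}\|_{\op}^2\,\|(I_d+E)^{-1}\|_{\op}\,\|E\|_{\op}.
\]
Combining this with $\|(I_d+E)^{-1}\|_{\op}\le(1-\varepsilon_{\rm prox})^{-1}$, which follows from the Neumann series or from the eigenvalue lower bound above, yields the stated inequality.

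For the excess-risk statement, note that $\Gstar$ solves the normal equation by \cref{thm:lfgi} (equivalently \cref{prop:normal-equation}). Therefore \cref{eq:excess-risk} in \cref{prop:residual-risk} applies verbatim with $G=\widetilde G$:
\[
    \mathcal R(\widetilde G)-\mathcal R(\Gstar)=\tr\bigl((\widetilde G-\Gstar)\Cdd(\widetilde G-\Gstar)^\top\bigr).
\]
If a quantitative version is wanted, this can be further bounded by $\|\widetilde G-\Gstar\|_{\op}^2\tr\Cdd$.

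No step is a real obstacle. The only point needing care is the invertibility of $\widetilde A$ when $\Delta$ is indefinite. Treating it in the symmetric preconditioned frame, as above, avoids any positivity assumption on $\Delta$ itself; symmetry of $E$ is used only to obtain the positive-definiteness claim and is not needed for the norm bound.
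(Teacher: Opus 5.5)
Your proof is correct and follows essentially the same route as the paper: the preconditioned factorization $\widetilde A=A^{1/2}(I_d+E)A^{1/2}$ with $E=\gammat A^{-1/2}\Delta A^{-1/2}$, a Neumann-series bound on $(I_d+E)^{-1}-I_d$, and then the excess-risk identity \cref{eq:excess-risk} with $\widetilde G-\Gstar$. Writing $(I_d+E)^{-1}-I_d=-(I_d+E)^{-1}E$ and adding the explicit bound $\|\widetilde G-\Gstar\|_{\op}^2\tr\Cdd$ are harmless refinements of that same argument.
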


Proof deferred to \cref{app:proof-gn-proxy-density}.

The conditional-mean identity does not depend on the Gauss--Newton approximation.  By \eqref{eq:measurable-gate-target}, any measurable gate leaves the blended Tweedie--TSI signal with conditional mean $s_t(y)$.  The Gauss--Newton choice is therefore a risk/conditioning decision: it drops sign-indefinite residual second-derivative fluctuations in exchange for a PSD precision whose resolvent is stable and whose divergence can be differentiated exactly.

The decomposition above also explains the approximation error.  Near a well-fit posterior region, $R_{\rm nl}$ is small or mean-zero to first order under the model, while its sign-indefinite fluctuations can dominate the variance and pole risk of a finite SNIS Hessian average.  This is the standard Gauss--Newton approximation logic \citep[Ch.~10]{nocedal2006numerical}.  The GN gate therefore introduces the usual Gauss--Newton proxy bias while replacing the indefinite residual-Hessian term by a PSD proxy.

\pagebreak

\section{Controlled Validation and Density-Evaluation Benchmarks}
\label{sec:validation-plan}
The validation benchmarks are organized by the failure of scalar Tweedie--TSI coefficients to match the direction-dependent risk-minimizing matrix gate in \cref{cor:scalar-failure}, the centered-primal residual coupling in \cref{subsec:bimodal-primal-failure-scale}, and the density-evaluation construction in \cref{sec:pf-density-evaluation-body}.  The singular Gaussian calculation is used analytically to show that scalar blending of score estimators is structurally insufficient, but it is not used as a separation experiment because centered primal regression has an exact Gaussian cancellation after centering.  The main controlled example is instead the misaligned singular-subspace Gaussian mixture: it requires a spatially varying matrix gate and produces the residual-coupled centered-primal finite-reference error computed in \cref{subsec:bimodal-primal-failure-scale}.  The $d=8$ mixture in \cref{subsec:exp-misaligned-gmm-d8} tests this mechanism directly, and the $d=24$ mixture in \cref{subsec:exp-misaligned-gmm-d24} checks whether it survives a moderate dimension increase.  \Cref{subsec:exp-funnel-d10} adds an explicitly non-Gaussian, non-GMM Neal funnel in $d=10$.  The Darcy-flow posterior in \cref{subsec:density-experiments} tests the density-evaluation application in the intended scientific-computing regime and separates robust posterior-bulk calibration from a tail-sensitive local Gaussian density fit.

\subsection{Comparison estimators and shared evaluation setup}
\label{subsec:comparison-estimators-setup}

The first column of \cref{tab:comparison-estimators} fixes the estimator labels used in the result tables.  The Uniform Scalar Blend and Uniform Matrix Blend rows are finite-reference versions of the spatially uniform estimators in \cref{eq:s-unif-sc,eq:s-unif-mat}; they correspond respectively to the scalar control-variate score-matching schedule and the DPSMC matrix control-variate schedule.  The remaining blended rows are query-dependent estimators: Scalar Blend, Matrix Blend, LFGI, and LFGI--GN estimate gates at each query $(y,t)$.  This comparison separates time-only control-variate scheduling from spatially varying finite-reference gates.

For a query $(y,t)$, let $w_i^{\rm s}(y,t)$ and $w_i^{\rm g}(y,t)$ denote normalized OU weights on independent score and gate banks.  The score bank forms
\[
    \widehat b^{\rm s}(y,t)=\sum_i w_i^{\rm s}(y,t)b(X_i^{\rm s};y,t),
    \qquad
    \widehat c^{\rm s}(y,t)=\sum_i w_i^{\rm s}(y,t)c(X_i^{\rm s};t),
    \qquad
    \widehat\delta^{\rm s}=\widehat c^{\rm s}-\widehat b^{\rm s}.
\]
For the gate bank, write $b_i^{\rm g}=b(X_i^{\rm g};y,t)$, $c_i^{\rm g}=c(X_i^{\rm g};t)$, $\delta_i^{\rm g}=c_i^{\rm g}-b_i^{\rm g}$, and let bars denote the corresponding gate-bank weighted means.  The scalar least-squares coefficient used by Scalar Blend is
\[
    \widehat g_{\rm SB}^{\rm g}(y,t)
    =
    \operatorname{clip}_{[0,1]}
    \left(
    -\frac{\sum_i w_i^{\rm g}\inner{b_i^{\rm g}-\bar b^{\rm g}}{\delta_i^{\rm g}-\bar\delta^{\rm g}}}
    {\sum_i w_i^{\rm g}\|\delta_i^{\rm g}-\bar\delta^{\rm g}\|^2+\varepsilon_{\rm SB}}
    \right),
\]
with the numerical denominator floor recorded in Appendix \cref{app:validation-run-setup}.  The matrix least-squares gate $\widehat G_{\rm cen}^{\rm g}$ is the centered primal gate in \cref{eq:centered-primal-gate,eq:centered-covariances} computed on the gate bank.

\begin{table}[H]
\centering
\footnotesize
\setlength{\tabcolsep}{3.2pt}
\renewcommand{\arraystretch}{1.17}
\begin{tabular}{@{}p{0.19\linewidth}p{0.27\linewidth}p{0.54\linewidth}@{}}
\toprule
Method label & Finite-reference score field & Gate construction \\
\midrule
Tweedie &
$\widehat s_{\rm TWD}=\widehat b^{\rm s}$ &
No gate; this is the OU-SNIS Tweedie signal. \\

Uniform Scalar Blend &
$\widehat s_{\rm USB}=\widehat c^{\rm s}+\widehat a_{\rm sc}^{\rm g}(t)(\widehat b^{\rm s}-\widehat c^{\rm s})$ &
Finite-reference version of the spatially uniform scalar estimator in \cref{eq:s-unif-sc}, with $\widehat a_{\rm sc}^{\rm g}(t)$ estimated from the gate-bank target-score second moment. \\

Scalar Blend &
$\widehat s_{\rm SB}=\widehat b^{\rm s}+\widehat g_{\rm SB}^{\rm g}\widehat\delta^{\rm s}$ &
Clipped local scalar least-squares coefficient estimated from the gate-bank Tweedie--TSI disagreement. \\

Uniform Matrix Blend &
$\widehat s_{\rm UMB}=\widehat c^{\rm s}+\widehat A_{\rm mat}^{\rm g}(t)(\widehat b^{\rm s}-\widehat c^{\rm s})$ &
Finite-reference version of the spatially uniform matrix estimator in \cref{eq:s-unif-mat}, with $\widehat A_{\rm mat}^{\rm g}(t)$ estimated from the gate-bank target-score second moment. \\

Matrix Blend &
$\widehat s_{\rm MB}=\widehat b^{\rm s}+\widehat G_{\rm cen}^{\rm g}\widehat\delta^{\rm s}$ &
Centered primal weighted-regression gate from \cref{eq:centered-primal-gate}.  This estimator uses no Hessians and targets the same population gate as LFGI. \\

LFGI (ours) &
$\widehat s_{\rm LFGI}=\widehat b^{\rm s}+\widehat G_{\rm LFGI}^{\rm g}\widehat\delta^{\rm s}$ &
$\widehat G_{\rm LFGI}^{\rm g}=\alphat^2(\alphat^2I_d+\gammat\widehat H^{\rm g})^{-1}$ with $\widehat H^{\rm g}=\sum_i w_i^{\rm g}H_0(X_i^{\rm g})$.  The controlled synthetic benchmarks use the raw Hessian average without truncation or PSD projection. \\

LFGI--GN (ours) &
$\widehat s_{\rm GN}=\widehat b^{\rm s}+\widehat G_{\rm GN}^{\rm g}\widehat\delta^{\rm s}$ &
$\widehat G_{\rm GN}^{\rm g}=\alphat^2(\alphat^2I_d+\gammat\widehat P_{\rm GN}^{\rm g})^{-1}$ with $\widehat P_{\rm GN}^{\rm g}=\sum_i w_i^{\rm g}P_i^{\rm GN}$.  This is the inverse-problem density-evaluation variant. \\
\bottomrule
\end{tabular}
\caption{Estimator labels and finite-reference score fields used in the validation and density-evaluation tables.  The score bank forms the Tweedie and target-score averages; the independent gate bank forms the uniform schedules and the spatially varying least-squares or Hessian gates.}
\label{tab:comparison-estimators}
\end{table}

Each sampling row substitutes the corresponding finite-reference score estimator for $s_t$ in the reverse SDE \eqref{eq:ou-reverse-sde} and uses the shared discretization recorded in Appendix \cref{app:validation-run-setup}.  Density-evaluation rows freeze the fitted score field and substitute it into the probability-flow density path \eqref{eq:pf-density-intro}, using the divergence and evidence-diagnostic construction from \cref{sec:pf-density-evaluation-body} and the density-evaluation setups in Appendix \cref{app:darcy-pilot-setup,app:pilot-split-setup}.  To match the independence condition in \eqref{eq:measurable-gate-target} and \cref{def:finite-lfgi-score}, the matrix-gate benchmarks use independent score and gate banks.  The two GMM fixed-budget tables and the funnel table use $\Ns=\Ng=250$, $N_{\rm gen}=N_{\rm test}=12000$, $300$ reverse steps, $t_{\min}=0.01$, $t_{\max}=3.0$, and double precision; the exact OU weights, Heun update, regularization, and repeated-run configuration are recorded in Appendix \cref{app:validation-run-setup}.  The reference-count sweep varies the matched score/gate bank size while holding the sampler and metric definitions fixed.  When the exact OU-marginal score is unavailable, the score-RMSE reference for Appendix \cref{app:metrics}, Def.~\ref{def:score-rmse}, is a high-reference SNIS--Tweedie estimate from held-out samples.

The fixed-budget sampling tables use a consistent metric order: Sliced KS, MMD, KDE NLL, and score RMSE or high-reference score-RMSE proxy.  Qualitatively, these check robust one-dimensional marginal agreement, kernel discrepancy, held-out KDE likelihood, and noisy-score error, respectively; the formal definitions are in Appendix \cref{app:metrics}, Defs.~\ref{def:sliced-ks}--\ref{def:score-rmse}.  Density-evaluation diagnostics check posterior-energy rank correlation, central affine calibration, residual scale, pointwise probability-flow likelihood, and correction-weight overlap; their formal definitions are in Appendix \cref{app:density-metrics}, Defs.~\ref{def:central-density}--\ref{def:evidence-diagnostics}.  The auxiliary rows ``GT floor'' and ``MAP--Laplace,'' when present, are not learned score-estimator rows: GT floor is an independent reference-sample floor for sample-quality metrics, and MAP--Laplace is the density proxy described in \cref{subsec:density-experiments}.

\subsection{Validation goals}

The validation suite tests three linked claims.  First, once scalar interpolation has been ruled out by the singular-Gaussian spectral calculation, the misaligned GMM checks whether a spatially varying matrix gate is needed when component responsibilities rotate the stiff subspace, including against spatially uniform scalar and matrix control-variate schedules.  Second, the GMM and gate-capture sweeps test whether finite-reference LFGI estimates that matrix gate more reliably than the centered primal Matrix Blend at matched bank size.  Third, the Darcy-flow and known-normalization benchmarks test whether the same finite-reference score construction can be frozen into a probability-flow density surrogate for inverse-problem posterior diagnostics.  Robust central density metrics are computed on the central posterior-energy band defined in Appendix \cref{app:density-metrics}, Def.~\ref{def:central-density}, rather than on extreme held-out energy tails.  Curl is used only as the finite-difference skew-Jacobian diagnostic defined in Appendix \cref{app:metrics}, Def.~\ref{def:curl}.

\vspace{-0.5em}

\subsection{Score-risk and gate-capture sweeps}
\label{subsec:validation-score-gate-sweeps}

The two diagnostics closest to the theory are score RMSE for the conditional score-risk claim and gate-capture error for the excess-risk geometry in \cref{eq:excess-risk,eq:lfgi-risk-capture-condition}.  We use time-averaged noisy-score RMSE (Appendix \cref{app:metrics}, Def.~\ref{def:score-rmse}) as the primary reference-count sweep.  To test the finite-reference gate claim, we directly compare LFGI gate estimation against centered primal matrix gate estimation using the gate-capture errors in Appendix \cref{app:metrics}, Def.~\ref{def:gate-capture}.  Downstream sample-quality tables use the sampling metrics summarized in \cref{subsec:comparison-estimators-setup}.

The first controlled validation is the score-RMSE reference-count sweep.  The reference-count sweep varies a matched bank size $N$ with $\Ns=\Ng=N$ and compares Tweedie, Uniform Scalar Blend, Scalar Blend, Uniform Matrix Blend, Matrix Blend, and LFGI on the $d=8$ misaligned singular-subspace GMM.  The score-RMSE diagnostic (see Appendix \cref{app:metrics}, Def.~\ref{def:score-rmse}) evaluates each finite-reference estimator against the exact OU-marginal score available for the analytic target, averaged over the displayed diffusion-time grid.  This is the most direct estimator-level test of the theory: it asks whether the LFGI gate yields a lower conditional score error than spatially uniform blending, spatially varying scalar interpolation, and centered primal matrix estimation at the same reference count.  Endpoint sample metrics are reported in the fixed-budget tables below, where they are interpreted as downstream sampler diagnostics rather than as the primary evidence for the local score-risk claim.

\begin{figure}[H]
\centering
\includegraphics[width=0.5\textwidth]{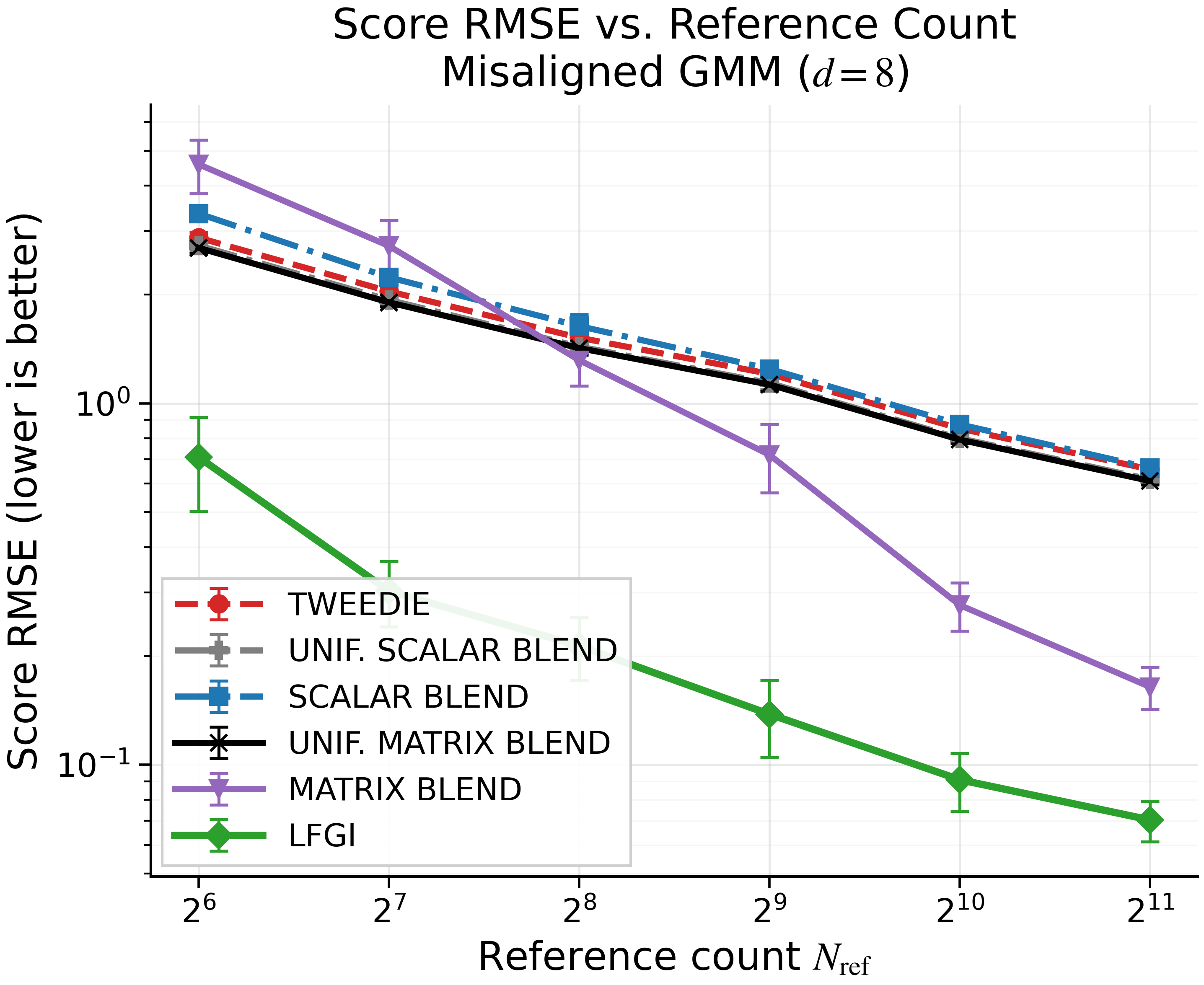}
\caption{Reference-count score-RMSE sweep on the $d=8$ misaligned singular-subspace GMM.  The metric is the time-averaged noisy-score RMSE (Appendix \cref{app:metrics}, Def.~\ref{def:score-rmse}).  LFGI remains below the other learned estimators across the displayed reference-bank sizes.}
\label{fig:validation-score-rmse-sweep}
\end{figure}

The second controlled validation is the gate-capture finite-reference count sweep.  The gate-estimation sweep holds the target, query-time grid, and score-bank construction fixed while varying the gate-bank size.  We compare the LFGI estimator, which applies $\Psiop$ to $\widehat H_N$, against centered primal matrix estimation of the same population gate.  The risk-weighted gate error is the excess-risk quadratic form $\operatorname{tr}\{(\widehat G-\Gstar)\Cdd(\widehat G-\Gstar)^\top\}$.  The main error geometry is the risk-weighted norm induced by the disagreement covariance $\Cdd=\E[(c-b)(c-b)^\top\mid Y_t=y]$, because this is the geometry that controls excess conditional score risk.  The risk-weighted and Frobenius gate-capture diagnostics are defined in Appendix \cref{app:metrics}, Def.~\ref{def:gate-capture}.  The granular time-by-time sweep is reported in Appendix \cref{app:gate-capture-breakdown}; across the displayed times, the LFGI estimator reaches a substantially smaller gate error at the same gate-bank size.

Together, \cref{fig:validation-score-rmse-sweep} and Appendix \cref{app:gate-capture-breakdown} provide the direct estimator-level validation.
The score-RMSE sweep tests the score estimator itself, and the gate-capture sweep tests the exact excess-risk geometry that appears in the finite-reference theory.  We do not use \cref{eq:relative-advantage-regime} as an empirical calibration plot: that inequality is a sufficient-condition guide for the distinct error mechanisms, not a sharp predictor.

\vspace{-0.7em}
\subsection{\texorpdfstring{Experiment I: misaligned singular-subspace GMM in $d=8$}{Experiment I: misaligned singular-subspace GMM in d=8}}
\label{subsec:exp-misaligned-gmm-d8}

The two-component calculation in \cref{subsec:misaligned-bimodal-gmm} isolates the mechanism; the validation target keeps that mechanism but uses $K=8$ components to make the multimodal sampling problem nontrivial.  The target has $K=8$ components in $d=8$, intrinsic rank $3$, component radius $3$, and normal scale $\sigma_\perp=0.035$.  Each component has PSD precision, so the Hessian-average map remains pole-separated, but the local stiff eigenspaces of the component precisions, and hence of $\Gstar(y,t)$, rotate across components.

\begin{table}[H]
\centering
\small
\begin{tabular}{@{}lcccc@{}}
\toprule
Method & Sliced KS $\downarrow$ & MMD $\downarrow$ & NLL $\downarrow$ & Score RMSE $\downarrow$ \\
\midrule
Tweedie & $0.0732{\pm}0.0109$ & $(2.60{\pm}0.42){\times}10^{-3}$ & $6.144{\pm}0.022$ & $7.316{\pm}0.950$ \\
Uniform Scalar Blend & $0.0714{\pm}0.0096$ & $(2.50{\pm}0.46){\times}10^{-3}$ & $6.128{\pm}0.018$ & $6.280{\pm}0.684$ \\
Scalar Blend & $0.0725{\pm}0.0102$ & $(3.00{\pm}0.47){\times}10^{-3}$ & $6.246{\pm}0.028$ & $6.778{\pm}0.788$ \\
Uniform Matrix Blend & $0.0726{\pm}0.0088$ & $(2.80{\pm}0.48){\times}10^{-3}$ & $6.135{\pm}0.016$ & $6.472{\pm}0.451$ \\
Matrix Blend & $0.0697{\pm}0.0068$ & $(3.20{\pm}0.71){\times}10^{-3}$ & $6.174{\pm}0.029$ & $7.449{\pm}0.430$ \\
LFGI (ours) & $\mathbf{0.0649{\pm}0.0087}$ & $\mathbf{(1.80{\pm}0.53){\times}10^{-3}}$ & $\mathbf{6.045{\pm}0.014}$ & $\mathbf{2.853{\pm}0.171}$ \\
GT floor & $0.0513{\pm}0.0053$ & $(0.577{\pm}0.204){\times}10^{-3}$ & $6.054{\pm}0.009$ & -- \\
\bottomrule
\end{tabular}
\caption{Misaligned singular-subspace GMM results in $d=8$, averaged over $5$ runs with $\Ns=\Ng=250$.  Columns follow \cref{subsec:comparison-estimators-setup}; entries are mean $\pm$ one standard deviation.  Bold values mark the best learned estimator in each column.}
\label{tab:misaligned-gmm-d8-results}
\end{table}
This experiment tests whether the direction-dependent attenuation rule from the Gaussian calculation remains useful when local mixture responsibilities vary, and whether spatially uniform matrix blending or centered primal matrix estimation closes the gap to LFGI.  LFGI reduces score RMSE to $2.853\pm0.171$, compared with $6.280\pm0.684$ for Uniform Scalar Blend, $6.472\pm0.451$ for Uniform Matrix Blend, $6.778\pm0.788$ for Scalar Blend, $7.316\pm0.950$ for Tweedie, and $7.449\pm0.430$ for Matrix Blend.  It also gives the best learned-estimator Sliced KS, MMD, NLL, and score RMSE.  The uniform gates improve some metrics relative to their local finite-reference counterparts, but they do not close the score-risk gap to LFGI.  The heatmaps in \cref{fig:misaligned-gmm-d8-heatmaps} show the same qualitative pattern: LFGI preserves the anisotropic component geometry more faithfully than the global and scalar baselines and avoids the finite-reference degradation of the centered primal matrix gate.

\begin{figure}[H]
\centering
\includegraphics[width=.99\textwidth]{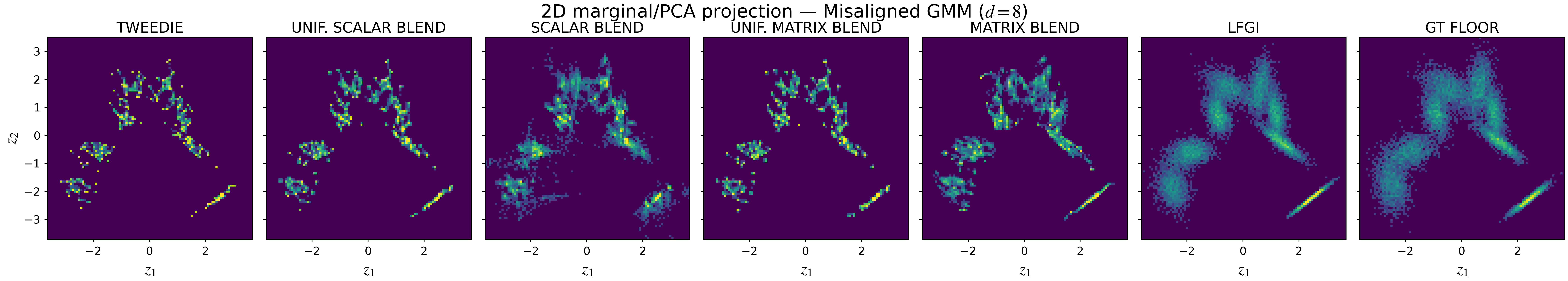}
\caption{Misaligned singular-subspace GMM, $K=8$, $d=8$, rank $3$: two-dimensional marginal/PCA projections.  Panels follow the method titles.}
\label{fig:misaligned-gmm-d8-heatmaps}
\end{figure}
\subsection{\texorpdfstring{Experiment II: misaligned singular-subspace GMM in $d=24$}{Experiment II: misaligned singular-subspace GMM in d=24}}
\label{subsec:exp-misaligned-gmm-d24}

The second GMM target keeps the same family and increases the ambient dimension to $d=24$, with intrinsic rank $4$, component radius $4.5$, and normal scale $\sigma_\perp=0.035$.  The example remains a controlled PSD/pole-separated mixture, but the higher dimension makes the singular-score moment-estimation problem substantially harder.

\begin{table}[H]
\centering
\small
\begin{tabular}{@{}lcccc@{}}
\toprule
Method & Sliced KS $\downarrow$ & MMD $\downarrow$ & NLL $\downarrow$ & Score RMSE $\downarrow$ \\
\midrule
Tweedie & $0.0689{\pm}0.0108$ & $(2.90{\pm}0.30){\times}10^{-3}$ & $18.791{\pm}0.030$ & $10.157{\pm}1.231$ \\
Uniform Scalar Blend & $0.0717{\pm}0.0043$ & $(3.40{\pm}0.93){\times}10^{-3}$ & $18.765{\pm}0.025$ & $9.823{\pm}0.961$ \\
Scalar Blend & $0.0782{\pm}0.0114$ & $(3.70{\pm}0.44){\times}10^{-3}$ & $18.966{\pm}0.060$ & $11.580{\pm}0.971$ \\
Uniform Matrix Blend & $0.0770{\pm}0.0110$ & $(3.40{\pm}0.98){\times}10^{-3}$ & $18.773{\pm}0.032$ & $9.650{\pm}0.508$ \\
Matrix Blend & $0.0808{\pm}0.0087$ & $(4.00{\pm}0.88){\times}10^{-3}$ & $18.835{\pm}0.046$ & $11.755{\pm}0.574$ \\
LFGI (ours) & $\mathbf{0.0627{\pm}0.0113}$ & $\mathbf{(1.60{\pm}0.53){\times}10^{-3}}$ & $\mathbf{18.701{\pm}0.010}$ & $\mathbf{4.470{\pm}0.199}$ \\
GT floor & $0.0527{\pm}0.0050$ & $(0.571{\pm}0.112){\times}10^{-3}$ & $18.740{\pm}0.005$ & -- \\
\bottomrule
\end{tabular}
\caption{Same fixed-budget setup and column convention as \cref{tab:misaligned-gmm-d8-results}, now for $d=24$ with intrinsic rank $4$.}
\label{tab:misaligned-gmm-d24-results}
\end{table}
The $d=24$ result serves as a compact scaling check.  LFGI reduces score RMSE to $4.470\pm0.199$, compared with $9.650\pm0.508$ for Uniform Matrix Blend, $9.823\pm0.961$ for Uniform Scalar Blend, $10.157\pm1.231$ for Tweedie, $11.755\pm0.574$ for Matrix Blend, and $11.580\pm0.971$ for Scalar Blend.  It also gives the best learned-estimator Sliced KS, MMD, NLL, and score RMSE, so the lower score-RMSE is accompanied by better sample-quality diagnostics at the larger dimension.  The qualitative PCA panels and extended interpretation are moved to \cref{app:d24-qualitative-panels} to preserve main-text space for the density-evaluation application.

\subsection{\texorpdfstring{Experiment III: Neal funnel in $d=10$}{Experiment III: Neal funnel in d=10}}
\label{subsec:exp-funnel-d10}

The Neal-funnel experiment in \cref{subsec:exp-funnel-d10} adds an explicitly non-Gaussian and non-mixture target.  The Neal funnel is defined hierarchically by a global coordinate $x_1\sim \N(0,\sigma_f^2)$ and conditional coordinates $x_{2:d}\mid x_1\sim\N(0,\exp(x_1)I_{d-1})$.  The displayed run uses $d=10$ and $\sigma_f^2=6$.  Unlike the GMM examples, the target is not assembled from locally Gaussian mixture components with fixed PSD precisions.  Its conditional scale changes exponentially across the funnel coordinate: the negative-$x_1$ neck is stiff and nearly singular, while the positive-$x_1$ mouth is diffuse.  This tests whether LFGI remains useful when $H_0$ is state-dependent and not generated by the PSD-GMM structure of the preceding validation targets.  The exact target construction is recorded in \cref{app:target-funnel-d10}.

The funnel run uses the same independent-bank sampler setup as the GMM tables.  Table~\ref{tab:funnel-d10-results} reports mean $\pm$ one standard deviation over five independent runs, with the high-reference score-RMSE proxy used because the exact OU-marginal score is not available in closed form.

The funnel also clarifies the scope of the pole-separation theory in \cref{sec:reference-complexity}.  The raw funnel observed information has indefinite regions, so the PSD argument used for the GMM targets does not apply.  The finite-reference LFGI theorems in \cref{prop:risk-weighted-lfgi-perturbation,thm:hessian-reference-condition} instead control the shifted conditional matrix $A(y,t)=\alphat^2I_d+\gammat H(y,t)$ through the pole factor in \cref{eq:Lambda-pole}, the relative Hessian error $\epsilon_H$, and the risk-capture inequality in \cref{eq:lfgi-risk-capture-condition}.  The auxiliary audit in \cref{app:funnel-pole-diagnostics} evaluates these quantities on the funnel query distribution.  Across the six audited times in \cref{tab:funnel-pole-audit-summary}, the observed nonpositive-$A$ rate and active-pole rate are both zero.  The strict finite-reference certificate is not uniform: the pass rate falls to $0.510$ at $t=0.3$, the largest reported $90\%$ quantile of $\epsilon_H$ is $0.916$, and the largest reported $90\%$ quantile of the perturbation-to-gain ratio in \cref{eq:lfgi-risk-capture-condition} is $127.0$.  Thus the funnel run is not certified by the strict finite-reference sufficient condition at all audited times, but the audit does not show the shifted-resolvent pole failure mode.
\vspace{-1.0em}
\begin{figure}[H]
\centering
\includegraphics[width=0.99\textwidth]{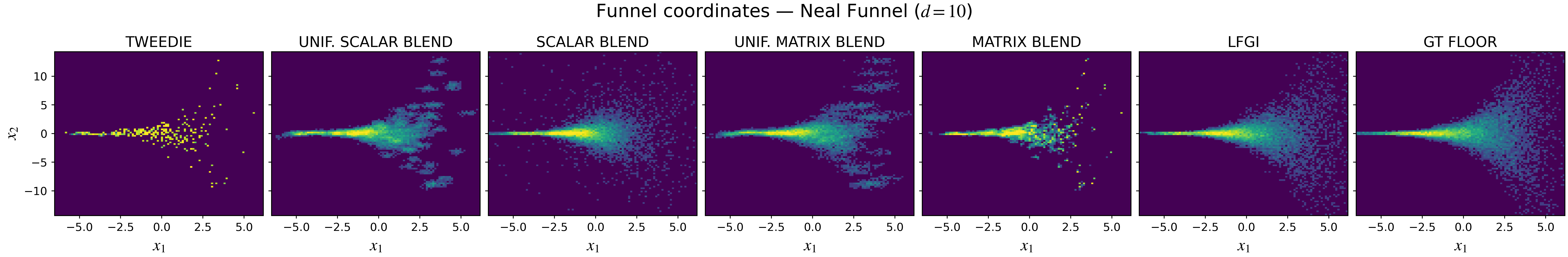}
\caption{Neal funnel in $d=10$: two-dimensional funnel-coordinate histograms.  Panels follow the method titles; robust coordinate limits show the narrow neck and widening mouth simultaneously.}
\label{fig:funnel-d10-heatmaps}
\end{figure}

\vspace{-1.0em}

\begin{table}[H]
\centering
\small
\begin{tabular}{@{}lcccc@{}}
\toprule
Method & Sliced KS $\downarrow$ & MMD $\downarrow$ & NLL $\downarrow$ & Score RMSE proxy $\downarrow$ \\
\midrule
Tweedie & $0.0721{\pm}0.0061$ & $(3.90{\pm}0.57){\times}10^{-3}$ & $223.4{\pm}28.1$ & $148.7{\pm}72.3$ \\
Uniform Scalar Blend & $0.0688{\pm}0.0051$ & $(3.80{\pm}0.86){\times}10^{-3}$ & $212.3{\pm}24.5$ & $62.1{\pm}45.8$ \\
Scalar Blend & $0.0902{\pm}0.0104$ & $(6.70{\pm}2.30){\times}10^{-3}$ & $181.8{\pm}25.5$ & $9.308{\pm}0.681$ \\
Uniform Matrix Blend & $0.0670{\pm}0.0051$ & $(3.40{\pm}1.20){\times}10^{-3}$ & $209.2{\pm}24.1$ & $54.1{\pm}13.4$ \\
Matrix Blend & $0.0805{\pm}0.0098$ & $(4.80{\pm}1.20){\times}10^{-3}$ & $218.8{\pm}27.3$ & $123.9{\pm}20.7$ \\
LFGI (ours) & $\mathbf{0.0614{\pm}0.0053}$ & $\mathbf{(1.50{\pm}0.50){\times}10^{-3}}$ & $\mathbf{152.4{\pm}20.8}$ & $\mathbf{9.280{\pm}1.838}$ \\
GT floor & $0.0528{\pm}0.0082$ & $(0.727{\pm}0.078){\times}10^{-3}$ & $145.4{\pm}22.3$ & -- \\
\bottomrule
\end{tabular}
\caption{Same fixed-budget reporting convention as \cref{tab:misaligned-gmm-d8-results}, now on Neal's funnel in $d=10$.  The last column uses the high-reference score-RMSE proxy; the ground-truth floor is a finite-sample reference floor.}
\label{tab:funnel-d10-results}
\end{table}

LFGI is the best learned estimator on the reported funnel metrics.  Its Sliced KS is close to the ground-truth floor, $0.0614\pm0.0053$ for LFGI versus $0.0528\pm0.0082$ for the floor, and its MMD and NLL are lower than those of the other learned estimators.  For this non-analytic funnel target, the score-RMSE value should be read as a finite-reference proxy for noisy-score error rather than as an independent closed-form exact score; the distributional conclusions rest on Sliced KS, MMD, NLL, and the qualitative funnel-coordinate panel.  The qualitative panel in \cref{fig:funnel-d10-heatmaps} shows the corresponding sample-shape pattern.  Tweedie under-resolves the funnel geometry, the spatially uniform gates improve some bulk diagnostics but do not match the spatially varying LFGI score proxy, Scalar Blend partially opens the funnel but produces a less faithful sample cloud, Matrix Blend does not turn its full matrix class into a better finite-reference sampler, and LFGI most closely matches the neck-to-mouth structure of the reference.  The important point for this paper is not that the funnel satisfies the same clean PSD GMM assumptions as the first two experiments.  It does not.  Rather, the result shows that the LFGI gate can remain useful on a genuinely non-Gaussian, non-mixture target with state-dependent anisotropy.

\subsection{Darcy-flow density evaluation for inverse problems}
\label{subsec:density-experiments}

We evaluate normalized-density estimation on a nonlinear elliptic inverse problem for Darcy flow.  On the unit square $D=[0,1]^2$, the pressure $p$ solves
\[
    -\nabla\!\cdot\{k(x;\alpha)\nabla p(x;\alpha)\}=1,
    \qquad x\in D,
    \qquad p|_{\partial D}=0.
\]
The unknown is the log-permeability field
\[
    m(x;\alpha)=\sum_{j=1}^{32}\alpha_j\phi_j(x),
    \qquad
    k(x;\alpha)=\exp(m(x;\alpha)),
    \qquad
    \alpha\sim\N(0,I_{32}),
\]
where the basis is obtained from the leading modes of an exponential covariance kernel on a $32\times32$ grid.  The observation map reads the pressure at $120$ interior sensors and adds Gaussian noise with standard deviation $10^{-3}$,
\[
    y_{\rm obs}=S p(\cdot;\alpha_\star)+\eta,
    \qquad
    \eta\sim\N(0,\sigma^2 I),
    \qquad
    \sigma=10^{-3}.
\]
Up to an additive constant, the posterior energy is therefore
\[
    E(\alpha)=-\log\tilde p(\alpha)
    =
    \frac12\|\alpha\|^2
    +
    \frac{1}{2\sigma^2}\|S p(\cdot;\alpha)-y_{\rm obs}\|^2 .
\]

The posterior score for the Darcy-flow benchmark in \cref{subsec:density-experiments} is available by differentiating the JAX forward solve, as recorded in \cref{app:darcy-gn-precision-details}.
The LFGI--GN density row uses the positive-semidefinite prior-plus-Gauss--Newton precision proxy from \cref{sec:gn-gates-body}; throughout the inverse-problem density examples, LFGI--GN denotes this Gauss--Newton gate construction plugged into the frozen finite-reference probability-flow surrogate of \cref{def:frozen-pf-surrogate}.  The posterior energy and score still correspond to the nonlinear target above.  The MAP--Laplace baseline is the Gaussian density model $q_{\rm Lap}=\N(x_{\rm MAP},H_E(x_{\rm MAP})^{-1})$, where $x_{\rm MAP}$ minimizes the posterior energy $E=-\log\tilde p$ and $H_E$ is the Hessian of $E$ at that point.  Thus the Darcy-flow benchmark in \cref{subsec:density-experiments} tests whether the finite-reference probability-flow density from \cref{subsec:pf-density-frozen-field} tracks the nonlinear posterior energy on held-out pilot samples, and compares that density with this local Gaussian baseline.

The Darcy-flow density-evaluation setup in \cref{subsec:density-experiments} constructs MALA posterior banks and splits posterior samples into score-signal, gate, and density-evaluation roles as in \cref{app:pilot-split-setup}, freezes the score estimator, and integrates the probability-flow density equation in \eqref{eq:pf-change-of-variables-density}.  Each seeded run uses $N_{\rm signal}=1000$, $N_{\rm gate}=1000$, and $N_{\rm eval}=1000$, with independent score, gate, and MALA-EVAL density-evaluation banks.  Under the independent score/gate coupling, the source MALA bank therefore contains $2000=1000+1000$ retained posterior samples, which are sliced into disjoint score-signal and gate banks; a separate MALA-EVAL run contributes the $1000$ held-out held-out density-evaluation samples.  Each retained posterior sample is produced by an adjoint-gradient MALA chain with $600$ transition steps, $200$ burn-in steps, step size $5\times10^{-5}$, and no additional thinning.  The probability-flow path uses $64$ time steps over $t\in[10^{-2.5},5]$.  Because the held-out density-evaluation samples include heavy energy tails, the plotted and tabled affine diagnostics use the central $3$--$97\%$ energy band as the fixed robust posterior-bulk convention in Appendix \cref{app:density-metrics}, Def.~\ref{def:central-density}.

\begin{table}[H]
\centering
\small
\begin{tabular}{@{}lrrrrr@{}}
\toprule
Method & Spearman $\uparrow$ & Central slope $\to 1$ & Central $R^2$ $\uparrow$ & Central RMSE $\downarrow$ & NLL $\downarrow$ \\
\midrule
Tweedie & $0.598\pm0.016$ & $1.72\pm0.03$ & $0.377\pm0.027$ & $40.0\pm0.9$ & $136.3\pm3.4$ \\
Uniform Scalar Blend & $0.579\pm0.024$ & $7.3\pm12.6$ & $0.298\pm0.148$ & $607\pm1263$ & $873\pm1650$ \\
Scalar Blend & $0.19\pm0.27$ & $(-1.6\pm12.7){\times}10^9$ & $0.004\pm0.003$ & $(4.3\pm4.6){\times}10^{12}$ & $(1.1\pm1.2){\times}10^{13}$ \\
Uniform Matrix Blend & $0.48\pm0.26$ & $6.0\pm11.0$ & $0.294\pm0.249$ & $1005\pm2155$ & $431\pm685$ \\
Matrix Blend & $0.660\pm0.023$ & $2.02\pm0.04$ & $0.432\pm0.038$ & $42.1\pm2.0$ & $127.6\pm2.1$ \\
LFGI--GN (ours) & $\mathbf{0.949\pm0.015}$ & $\mathbf{0.86\pm0.02}$ & $\mathbf{0.86\pm0.06}$ & $\mathbf{6.3\pm1.6}$ & $\mathbf{28.8\pm1.2}$ \\
MAP--Laplace & $0.862\pm0.004$ & $2.45\pm0.11$ & $0.626\pm0.009$ & $34.3\pm1.9$ & $198\pm62$ \\
\bottomrule
\end{tabular}
\caption{Darcy-flow density-evaluation results, averaged over $5$ seeded runs with $n_{\rm eval}=1000$ held-out MALA-EVAL samples per run.  Metrics are robust posterior-bulk diagnostics on the central $3$--$97\%$ energy band; definitions are in Appendix \cref{app:density-metrics}.  Entries are mean $\pm$ one standard deviation.}
\label{tab:darcy-density-results}
\end{table}

\Cref{tab:darcy-density-results} reports posterior-bulk diagnostics for the Darcy-flow density benchmark.  In the central $3$--$97\%$ posterior-energy band (Appendix \cref{app:density-metrics}, Def.~\ref{def:central-density}), LFGI--GN has central affine $R^2=0.86\pm0.06$ and central RMSE $6.3\pm1.6$, compared with $0.626\pm0.009$ and $34.3\pm1.9$ for MAP--Laplace.  LFGI--GN also gives the best rank correlation and the lowest pointwise NLL (Def.~\ref{def:pf-nll}), while Matrix Blend improves some rank and NLL diagnostics relative to Tweedie but remains much less centrally calibrated than LFGI--GN.  The Uniform Scalar Blend and Uniform Matrix Blend density rows are less stable on this inverse-problem posterior, consistent with a target whose useful curvature information is spatially varying rather than spatially uniform.  The posterior-bulk diagnostics therefore show a finite-reference LFGI advantage for central energy calibration and normalized-density NLL.  The corresponding density-energy grid is reported in Appendix \cref{app:darcy-density-grid}.

\subsection{Known-normalization calibration on analytic inverse problems}
\label{subsec:known-z-calibration}

The nonlinear Darcy-flow posterior is not analytically normalized, so absolute evidence error is measured on separate analytic inverse-problem calibration targets.  The first target is a linear-Gaussian positive control with a closed-form Gaussian posterior and closed-form evidence.  The second target uses a four-component Gaussian-mixture likelihood under the same Gaussian prior and a shared forward geometry, giving an exact non-Gaussian Gaussian-mixture posterior and a closed-form evidence.  The third target is again an analytic Gaussian-mixture inverse problem, but the component likelihoods use different right-singular frames, so posterior curvature is stiff and mode-dependent even though the evidence remains closed form.  The Gaussian target checks the implementation of the probability-flow log-density and known-evidence metric; the two mixture targets separate shared-geometry multimodality from misaligned anisotropic curvature.  MAP--Laplace is exact only in the Gaussian positive control.  The learned estimator labels are those in \cref{tab:comparison-estimators}, with LFGI--GN using the Gauss--Newton gate convention from \cref{sec:gn-gates-body}.

\begin{table}[H]
\centering
\footnotesize
\setlength{\tabcolsep}{3.2pt}
\renewcommand{\arraystretch}{1.12}
\begin{tabular}{@{}lcccc@{}}
\toprule
Method &
\begin{tabular}[c]{@{}c@{}}$\log\qpf$\\bias $\to 0$\end{tabular} &
\begin{tabular}[c]{@{}c@{}}$\log\qpf$\\RMSE $\downarrow$\end{tabular} &
\begin{tabular}[c]{@{}c@{}}$|\widehat{\log Z}-\log Z|$\\$\downarrow$\end{tabular} &
\begin{tabular}[c]{@{}c@{}}Correction\\ESS/$n$ $\uparrow$\end{tabular} \\
\midrule
Tweedie & $-18.4 \pm 0.10$ & $24.5 \pm 0.21$ & $18.4 \pm 0.10$ & $(6.09 \pm 2.38){\times}10^{-4}$ \\
Uniform Scalar Blend & $-1.91 \pm 0.04$ & $3.51 \pm 0.06$ & $1.91 \pm 0.04$ & $0.00107 \pm 5.68{\times}10^{-4}$ \\
Scalar Blend & $-5.01 \pm 1.4$ & $7.68 \pm 1.0$ & $5.01 \pm 1.4$ & $(5.13 \pm 0.19){\times}10^{-4}$ \\
Uniform Matrix Blend & $-0.0840 \pm 0.0091$ & $0.458 \pm 0.020$ & $0.0840 \pm 0.0091$ & $0.867 \pm 0.0047$ \\
Matrix Blend & $-1.77 \pm 0.13$ & $5.45 \pm 0.60$ & $1.77 \pm 0.13$ & $(5.00 \pm 0.003){\times}10^{-4}$ \\
LFGI--GN (ours) & $-0.0798 \pm 0.0094$ & $0.367 \pm 0.021$ & $0.0798 \pm 0.0094$ & $0.934 \pm 0.0037$ \\
MAP--Laplace & $\mathbf{0 \pm 0}$ & $\mathbf{0 \pm 0}$ & $\mathbf{0 \pm 0}$ & $\mathbf{1 \pm 0}$ \\
\bottomrule
\end{tabular}
\caption{Known-normalization calibration on the linear-Gaussian analytic inverse problem, averaged over $5$ seeded runs.  Entries are mean $\pm$ one run-to-run standard deviation.  The log-density and correction-weight diagnostics are defined in Appendix \cref{app:density-metrics}, Def.~\ref{def:evidence-diagnostics}; ESS is also reported as ESS/$n$.  Bold values mark the best method in each metric column, using absolute value for bias.}
\label{tab:known-z-gaussian-calibration}
\end{table}

\begin{table}[H]
\centering
\footnotesize
\setlength{\tabcolsep}{3.2pt}
\renewcommand{\arraystretch}{1.12}
\begin{tabular}{@{}lcccc@{}}
\toprule
Method &
\begin{tabular}[c]{@{}c@{}}$\log\qpf$\\bias $\to 0$\end{tabular} &
\begin{tabular}[c]{@{}c@{}}$\log\qpf$\\RMSE $\downarrow$\end{tabular} &
\begin{tabular}[c]{@{}c@{}}$|\widehat{\log Z}-\log Z|$\\$\downarrow$\end{tabular} &
\begin{tabular}[c]{@{}c@{}}Correction\\ESS/$n$ $\uparrow$\end{tabular} \\
\midrule
Tweedie & $-29.6 \pm 0.41$ & $38.2 \pm 0.93$ & $29.6 \pm 0.41$ & $(5.01 \pm 0.014){\times}10^{-4}$ \\
Uniform Scalar Blend & $-3.69 \pm 0.13$ & $5.99 \pm 0.28$ & $3.69 \pm 0.13$ & $(6.05 \pm 2.22){\times}10^{-4}$ \\
Scalar Blend & $-5.92 \pm 2.8$ & $11.0 \pm 2.0$ & $5.92 \pm 2.8$ & $(5.00 \pm 0.002){\times}10^{-4}$ \\
Uniform Matrix Blend & $-0.0948 \pm 0.0052$ & $0.499 \pm 0.016$ & $0.0948 \pm 0.0052$ & $0.847 \pm 0.0059$ \\
Matrix Blend & $-5.27 \pm 0.28$ & $12.7 \pm 1.3$ & $5.27 \pm 0.28$ & $(5.00 \pm 0.007){\times}10^{-4}$ \\
LFGI--GN (ours) & $\mathbf{-0.0916 \pm 0.0073}$ & $\mathbf{0.367 \pm 0.007}$ & $\mathbf{0.0916 \pm 0.0073}$ & $\mathbf{0.934 \pm 0.0016}$ \\
MAP--Laplace & $-672 \pm 14$ & $865 \pm 11$ & $672 \pm 14$ & $(6.04 \pm 2.22){\times}10^{-4}$ \\
\bottomrule
\end{tabular}
\caption{Same known-normalization diagnostics and reporting convention as \cref{tab:known-z-gaussian-calibration}, now for the shared-geometry four-component analytic inverse problem.  The exact posterior is a Gaussian mixture with closed-form evidence and shared component curvature; MAP--Laplace is only a single local Gaussian approximation.}
\label{tab:known-z-mixture-calibration}
\end{table}

\begin{table}[H]
\centering
\footnotesize
\setlength{\tabcolsep}{3.2pt}
\renewcommand{\arraystretch}{1.12}
\begin{tabular}{@{}lcccc@{}}
\toprule
Method &
\begin{tabular}[c]{@{}c@{}}$\log\qpf$\\bias $\to 0$\end{tabular} &
\begin{tabular}[c]{@{}c@{}}$\log\qpf$\\RMSE $\downarrow$\end{tabular} &
\begin{tabular}[c]{@{}c@{}}$|\widehat{\log Z}-\log Z|$\\$\downarrow$\end{tabular} &
\begin{tabular}[c]{@{}c@{}}Correction\\ESS/$n$ $\uparrow$\end{tabular} \\
\midrule
Tweedie & $-29.9 \pm 0.62$ & $37.7 \pm 0.82$ & $29.9 \pm 0.62$ & $(5.00 \pm 0){\times}10^{-4}$ \\
Uniform Scalar Blend & $-3.77 \pm 0.15$ & $6.10 \pm 0.24$ & $3.77 \pm 0.15$ & $(5.35 \pm 0.79){\times}10^{-4}$ \\
Scalar Blend & $-5.87 \pm 2.7$ & $10.5 \pm 2.1$ & $5.87 \pm 2.7$ & $(6.07 \pm 1.89){\times}10^{-4}$ \\
Uniform Matrix Blend & $-2.37 \pm 0.06$ & $4.23 \pm 0.11$ & $2.37 \pm 0.06$ & $(7.16 \pm 2.45){\times}10^{-4}$ \\
Matrix Blend & $-5.66 \pm 0.42$ & $13.9 \pm 1.8$ & $5.66 \pm 0.42$ & $(5.00 \pm 0.003){\times}10^{-4}$ \\
LFGI--GN (ours) & $\mathbf{-0.102 \pm 0.007}$ & $\mathbf{0.439 \pm 0.035}$ & $\mathbf{0.102 \pm 0.007}$ & $\mathbf{0.237 \pm 0.302}$ \\
MAP--Laplace & $-415 \pm 110$ & $527 \pm 134$ & $415 \pm 110$ & $(5.00 \pm 0){\times}10^{-4}$ \\
\bottomrule
\end{tabular}
\caption{Same known-normalization diagnostics and reporting convention as \cref{tab:known-z-gaussian-calibration}, now for the misaligned-curvature analytic inverse problem.  The exact posterior is a Gaussian mixture with closed-form evidence, but component likelihoods have different right-singular frames, making the stiff directions mode-dependent.}
\label{tab:known-z-misaligned-mixture-calibration}
\end{table}

In \cref{tab:known-z-gaussian-calibration}, MAP--Laplace is exact up to roundoff because the posterior Hessian is constant, while LFGI--GN has absolute log-evidence error $0.0798\pm0.0094$ and correction ESS/$n$ $0.934\pm0.0037$ across five seeded runs.  Uniform Matrix Blend is also accurate on this Gaussian target with spatially uniform curvature, with absolute log-evidence error $0.0840\pm0.0091$ and correction ESS/$n$ $0.867\pm0.0047$.  In \cref{tab:known-z-mixture-calibration}, the shared-geometry mixture remains non-Gaussian and multimodal, but the dominant curvature is still shared across the mixture; accordingly, Uniform Matrix Blend stays close to LFGI--GN, while scalar, Tweedie, Matrix Blend, and MAP--Laplace rows remain much less calibrated.  In \cref{tab:known-z-misaligned-mixture-calibration}, the component curvature frames differ across modes.  There Uniform Matrix Blend improves over the scalar and Tweedie rows but its absolute log-evidence error remains $2.37\pm0.06$, while LFGI--GN reduces the error to $0.102\pm0.007$.  The three known-normalization controls therefore separate implementation correctness, shared non-Gaussian curvature, and mode-dependent anisotropic curvature.


\section{Discussion}
\label{sec:discussion}

\subsection{What this paper establishes}

We combine the Tweedie and target-score identities when the OU-marginal score is estimated from weighted reference samples.  Since their disagreement signal is a conditionally zero-mean control variate, any measurable matrix gate leaves the conditional mean equal to $s_t(y)$ and only changes conditional risk; see \eqref{eq:tweedie-identity}, \eqref{eq:tsi-identity}, \eqref{eq:delta-zero-mean}, and \eqref{eq:measurable-gate-target}.  The risk-minimizing population gate is characterized by a normal equation, targeted directly by the centered primal Matrix Blend in \cref{eq:centered-primal-gate}, and equivalently by the Laplace--Fisher Gate Identity in \cref{thm:lfgi}.  At finite reference count, centered primal regression estimates $-\Cebd\Cdd^{-1}$ from weighted covariance moments, while LFGI estimates $H(y,t)$ and applies $\Psiop(H)$.  \Cref{prop:risk-weighted-lfgi-perturbation,thm:hessian-reference-condition} together with \eqref{eq:lfgi-risk-capture-condition} bound the excess-risk term in \cref{eq:excess-risk} by the Hessian-average error $\epsilon_H$ and the pole factor $\Lambdapole$.

\subsection{When LFGI helps}

LFGI is not a universal Hessian plug-in rule.
It helps under the conditions stated in \cref{prop:risk-weighted-lfgi-perturbation,ass:hessian-concentration,thm:hessian-reference-condition}: the shifted Hessian average $A=\alphat^2I_d+\gammat H$ must stay away from poles on the disagreement-risk directions, and the relative Hessian fluctuation $\epsilon_H$ must be small enough for \cref{eq:lfgi-risk-capture-condition}.  The constant-Hessian Gaussian case motivates matrix gates over scalar gates but is a tie with centered primal regression because of the pointwise cancellation in \cref{prop:gaussian-centered-cancellation}.  The non-Gaussian examples therefore test a more specific claim: scalar interpolation is too crude, $\widehat C_{r_\star\delta}\widehat C_{\delta\delta}^{-1}\Cdd^{1/2}$ can be large for centered primal regression, and the LFGI error $\epsilon_H$ remains controlled.  The GMMs test multimodality and locally rotated singular subspaces; the funnel checks a non-GMM target with state-dependent anisotropy; the Darcy-flow inverse problem tests whether the fitted probability-flow score estimator $\widehat s(x,t)$ supports normalized density evaluation in a PDE-constrained posterior.  Curl is reported only as the auxiliary structural diagnostic defined in Appendix \cref{app:metrics}, Def.~\ref{def:curl}, not as a risk objective optimized by LFGI.

The Neal-funnel pole audit in \cref{app:funnel-pole-diagnostics} separates two conditions that can be conflated.  Raw indefinite curvature does not by itself violate \cref{prop:risk-weighted-lfgi-perturbation,thm:hessian-reference-condition}; the relevant matrix is the shifted conditional average $A(y,t)$.  In the audited funnel run, no nonpositive shifted matrices or active poles were observed, even though the strict finite-reference risk-capture inequality failed on a non-negligible fraction of middle-time queries.  The PSD and pole-separated assumptions give sufficient conditions for the finite-reference bound, but they are not necessary conditions for every useful run.  LFGI can remain stable beyond the PSD setting when OU conditioning and the disagreement covariance avoid the shifted-pole directions.  When the relative Hessian error or the perturbation-to-gain ratio is large, the observed performance should be reported as empirical stability rather than as a consequence of the finite-reference certificate.

\subsection{Limitations}

The method assumes access to the target score and to the observed information or a useful Hessian/Gauss--Newton proxy.  This is natural for explicit-energy Bayesian inverse problems and controlled scientific targets, but not for sample-only generative modeling without additional score or Hessian learning.  It also assumes a finite set of reference samples, denoted earlier by $\bank_N$.  In the density-evaluation application the bank is supplied by a MALA pilot, and the contribution is to add normalized pointwise-density diagnostics, evidence checks, and ESS overlap certificates that the pilot alone lacks; producing the bank from scratch by adaptive transport is outside the present paper.

If the pilot bank misses a posterior mode, the fixed probability-flow score estimator is not evaluated there.  A missing mode may not be detected by local ESS on the region covered by the pilot/reference bank or held-out MALA evaluation samples, and affine errors in $-\log\qpf$ can propagate into evidence estimates unless diagnosed by calibration checks.  The LFGI gate can also become unstable near poles or when the conditional Hessian distribution fluctuates strongly in amplified directions; this is the failure mode described by the perturbation and concentration conditions in \cref{prop:risk-weighted-lfgi-perturbation,ass:hessian-concentration,thm:hessian-reference-condition}.

Finally, full Hessians and full matrix gates are \(d^2\)-scale objects.  The validation suite in \cref{sec:validation-plan} consists of moderate-dimensional controlled validations plus the Darcy-flow inverse problem in \cref{subsec:density-experiments}, and scalable structured gates remain a separate implementation problem.  A sharper concentration theory for self-normalized OU reference banks, beyond the effective-sample-size hypothesis used here, is also left open.

\subsection{Future directions}
Several extensions are natural: adaptive density-ratio-corrected transport using the probability-flow density $\qpf$ induced by the LFGI score estimator for correction weights; structured Fisher--Gauss--Newton gates such as diagonal, block, low-rank, diagonal-plus-low-rank, and matrix-free forms; sharper self-normalized OU concentration theory for tied or partially shared MALA banks; and optional distillation of the validated fixed finite-reference score estimator into a faster score or density surrogate.

\appendix


\section{Validation Targets and Fixed-Budget Setup}
\label{app:validation-targets}

This appendix records the target definitions and reproducibility details used in \cref{sec:validation-plan}.  The validation suite contains one misaligned singular-subspace GMM in $d=8$, the same misaligned family in $d=24$, an explicitly non-Gaussian Neal funnel in $d=10$, one Darcy-flow inverse problem for density evaluation, and three analytic inverse-problem targets for known-normalization calibration.

\subsection{Misaligned singular-subspace Gaussian mixture}
\label{app:target-misaligned-subspace-gmm}

The GMM validation family is
\[
    p_0(x)=\frac1K\sum_{k=1}^K \N(x;m_k,\Sigma_k),
\]
where the component means lie on a fixed-radius configuration and each covariance has a low-dimensional active subspace plus a stiff normal complement:
\[
    \Sigma_k
    =
    U_k \Lambda_\parallel U_k^\top
    +
    \sigma_\perp^2 (I_d-U_kU_k^\top),
    \qquad
    U_k^\top U_k=I_r.
\]
The subspaces $U_k$ are deliberately misaligned across mixture components.  Thus the target is non-Gaussian and locally anisotropic, but each component precision is PSD.  The displayed runs use
\[
    (K,d,r,\sigma_\perp,\mathrm{radius})=(8,8,3,0.035,3)
\]
for the low-dimensional GMM and
\[
    (K,d,r,\sigma_\perp,\mathrm{radius})=(8,24,4,0.035,4.5)
\]
for the scaling check.

This target makes the centered-primal term $\widehat C_{r_\star\delta}\widehat C_{\delta\delta}^{-1}\Cdd^{1/2}$ and the LFGI Hessian error $\epsilon_H$ behave differently.  Direct primal gate estimation must infer singular score-moment geometry from finite weighted samples, while LFGI estimates a PSD Hessian average and applies $\Psiop$; in this target the possible inverse poles are separated in the directions that carry non-negligible disagreement covariance $\Cdd$.

\subsection{\texorpdfstring{Neal funnel in $d=10$}{Neal funnel in d=10}}
\label{app:target-funnel-d10}

The funnel validation target is the standard Neal funnel hierarchy, used here with $d=10$ and $\sigma_f^2=6$:
\[
    X_1\sim \N(0,\sigma_f^2),
    \qquad
    X_{2:d}\mid X_1=x_1\sim \N(0,\exp(x_1)I_{d-1}).
\]
Equivalently, up to an additive normalizing constant,
\[
    \log p_0(x)
    =
    -\frac{x_1^2}{2\sigma_f^2}
    -\frac{d-1}{2}x_1
    -\frac12\exp(-x_1)\|x_{2:d}\|^2.
\]
Reference samples are generated exactly from the hierarchy, rather than by fitting or sampling a Gaussian mixture.  The score and observed information are therefore available in closed form and are also obtained by autodiff in the benchmark script.

Writing $u=x_{2:d}$ and $a_f=\exp(-x_1)$, the clean score is
\[
    s_0(x)
    =
    \left(
        -\frac{x_1}{\sigma_f^2}-\frac{d-1}{2}+\frac12 a_f\|u\|^2,
        -a_f u
    \right),
\]
and the observed information $H_0(x)=-\nabla^2\log p_0(x)$ has block form
\[
    H_0(x)
    =
    \begin{bmatrix}
        \sigma_f^{-2}+\frac12 a_f\|u\|^2 & -a_f u^\top \\
        -a_f u & a_f I_{d-1}
    \end{bmatrix}.
\]
The Schur complement of the lower block is $\sigma_f^{-2}-\frac12 a_f\|u\|^2$, so the raw Hessian geometry is not constrained to be the benign PSD component geometry of the GMM experiments.  The funnel has an indefinite Schur complement, so it tests the same LFGI estimator on a non-Gaussian, non-GMM target with state-dependent anisotropy and a stiff neck.

\subsection{Fixed-budget setup}
\label{app:validation-run-setup}

The displayed fixed-budget comparison tables use the estimator labels fixed in \cref{tab:comparison-estimators}: Tweedie, Uniform Scalar Blend, Scalar Blend, Uniform Matrix Blend, Matrix Blend, and LFGI.  LFGI uses the full raw Hessian precision and does not use Hessian clipping or PSD projection.

The synthetic validation setup uses a single OU corruption and weighting convention.  All synthetic validation runs use the OU process in \cref{subsec:ou-process}, with
\[
    \alphat=e^{-t},
    \qquad
    \gammat=1-e^{-2t},
    \qquad
    X_t\mid X_0=x\sim\N(\alphat x,\gammat I_d).
\]
For a reference bank $\{x_i\}$, the SNIS weights at query $(y,t)$ are
\[
    \widetilde w_i(y,t)
    =
    \frac{\exp\{-\|y-\alphat x_i\|^2/(2\gammat)\}}
    {\sum_j\exp\{-\|y-\alphat x_j\|^2/(2\gammat)\}},
\]
computed in log space.  The score bank constructs the Tweedie and TSI signals, while an independent gate bank constructs the spatially uniform schedules, the spatially varying scalar coefficient, the Matrix Blend gate, or the LFGI gate.  The fixed-budget runs use
\[
    N_{\rm score}=N_{\rm gate}=250,
    \qquad
    N_{\rm gen}=N_{\rm test}=12000,
    \qquad
    N_{\rm large}=512.
\]
Here $N_{\rm large}$ is the held-out SNIS reference bank used only for the score-RMSE proxy when the exact OU-marginal score is not available.

The reverse-sampling discretization is shared across learned estimators.  All learned estimators are plugged into the same Heun predictor--corrector reverse SDE sampler.  The run uses double precision, the linear decreasing grid
\[
    t_k=3.0-k(3.0-0.01)/300,
    \qquad
    k=0,\ldots,300,
\]
initialization $Y_{t_0}\sim\N(0,I_d)$, and final Tweedie denoising at $t_{300}=0.01$. With $h_k=t_k-t_{k+1}$, $z_k\sim\N(0,I_d)$, and $f(y,t)=y+2\widehat s(y,t)$, one step is
\[
\begin{aligned}
    \widetilde Y_{k+1}
    &=Y_k+h_k f(Y_k,t_k)+\sqrt{2h_k}\,z_k,\\
    Y_{k+1}
    &=Y_k+\frac{h_k}{2}
      \{f(Y_k,t_k)+f(\widetilde Y_{k+1},t_{k+1})\}
      +\sqrt{2h_k}\,z_k.
\end{aligned}
\]
The same Gaussian increment is used in the predictor and corrector.  After the last stochastic step, the returned sample is
\[
    \widehat X_0
    =
    \frac{Y_{t_{300}}+\gammat\widehat s(Y_{t_{300}},t_{300})}{\alphat}
    \quad\text{at }t=t_{300}.
\]

The blend-estimator regularization is fixed across runs.  The scalar blend uses no tuned ridge parameter.  Its weighted least-squares coefficient is the scalar gate in \cref{tab:comparison-estimators}, with denominator floor $10^{-20}$ and clipping to $[0,1]$.  The Uniform Scalar Blend and Uniform Matrix Blend estimate the target-score second moment from the gate bank,
\[
    \widehat I_\pi^{\rm g}=\frac{1}{N_{\rm gate}}\sum_i s_0(x_i^{\rm gate})s_0(x_i^{\rm gate})^\top,
\]
and use the estimators in \cref{eq:s-unif-sc,eq:s-unif-mat} with $I_\pi$ replaced by $\widehat I_\pi^{\rm g}$.
The Matrix Blend gate is estimated from the independent gate bank as
\[
    \widehat G
    =
    -\widehat{\operatorname{Cov}}(b,\delta)
    \left(\widehat{\operatorname{Cov}}(\delta,\delta)+\rho I_d\right)^{-1},
    \qquad
    \delta=c-b,
\]
where $b_i=-(y-\alphat x_i)/\gammat$, $c_i=s_0(x_i)/\alphat$, and
\[
    \rho=10^{-8}+10^{-2}\,\frac{\operatorname{tr}(\widehat{\operatorname{Cov}}(\delta,\delta))}{d}.
\]

No symmetrization is applied to the fitted matrix gate; the numerical gate-entry clip is set to $10^6$.  LFGI uses the independent gate-bank Hessian average $\widehat H(y,t)=\sum_i\widetilde w_i^{\rm gate}(y,t)H_0(x_i^{\rm gate})$ and applies
\[
    \widehat G_{\rm LFGI}(y,t)
    =
    \alphat^2(\alphat^2I_d+\gammat\widehat H(y,t))^{-1}
\]
through an eigensolve of the symmetrized averaged Hessian; this is the raw gate, not a PSD-projected gate.

The repetitions and metric evaluation are fixed as follows.  The synthetic fixed-budget aggregate tables average five independent runs with base seed $42$, using seeds $42,43,\ldots,46$.  Reference samples for both GMM targets are exact mixture draws, and reference samples for the funnel are exact hierarchical draws.  Sliced KS uses at most $512$ generated and $512$ reference samples per run, with $1000$ random projections.  MMD uses the multiscale RBF grid in \cref{def:mmd}.  KDE NLL uses a Gaussian KDE fit to at most $5000$ generated samples with bandwidth $\max\{n^{-1/(d+4)},0.05\}$ and is evaluated on the independent test bank.  The ``GT floor'' rows in the main tables are independent reference-sample floors for the sample-quality metrics, not learned score estimators.  The separate pole audit for the funnel is reported in \cref{app:funnel-pole-diagnostics}; it checks the shifted conditional matrices in \cref{eq:Lambda-pole,eq:lfgi-risk-capture-condition}, not the raw Hessian eigenvalues alone.


\section{Additional qualitative panels for the $d=24$ GMM scaling check}
\label{app:d24-qualitative-panels}

The $d=24$ misaligned GMM table provides a compact scaling check.  The qualitative PCA panel gives the corresponding low-dimensional visualization.  In this run, LFGI reduces score RMSE from $10.157\pm1.231$ for Tweedie, $9.823\pm0.961$ for Uniform Scalar Blend, $11.580\pm0.971$ for Scalar Blend, $9.650\pm0.508$ for Uniform Matrix Blend, and $11.755\pm0.574$ for Matrix Blend to $4.470\pm0.199$, while also improving the reported Sliced KS, MMD, and NLL sample-quality diagnostics.  The target remains inside the intended PSD/pole-separated regime, so the figure is interpreted as a scaling check consistent with the residual-leakage analysis of \cref{subsec:bimodal-primal-failure-scale} rather than as a separate application result.

\begin{figure}[H]
\centering
\includegraphics[width=.99\textwidth]{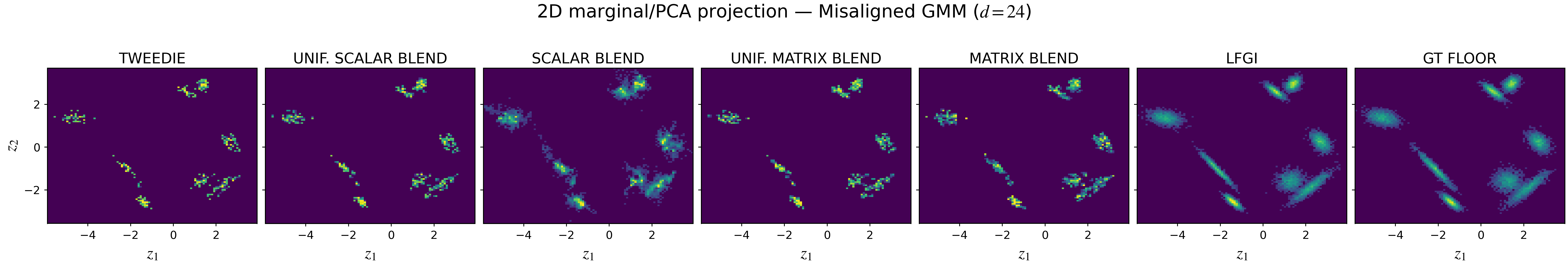}
\caption{Same projection diagnostic as \cref{fig:misaligned-gmm-d8-heatmaps}, now for the $d=24$, rank-$4$ target.  Panels follow the method titles.}
\label{fig:misaligned-gmm-d24-heatmaps}
\end{figure}


\section{Density-Evaluation Inverse-Problem Targets and Pilot Construction, Bank Splits, and Density-Evaluation Setup}
\label{app:density-targets}

This appendix records the PDE, parameterization, observation model, pilot construction, and bank split for the Darcy-flow density-evaluation benchmark.

\subsection{Posterior-pilot OU weights}
\label{app:darcy-likelihood-informed-weights}

The inverse-problem construction uses the posterior density in coefficient space as the target density,
\[
    \tilde p_0(\alpha)=p_{\rm prior}(\alpha)L(\alpha),
    \qquad
    L(\alpha)=p(y_{\rm obs}\mid \alpha).
\]

The posterior target score is
\[
    s^{\rm post}_0(\alpha)=\nabla_\alpha\log p_{\rm prior}(\alpha)+\nabla_\alpha\log L(\alpha).
\]
In the reported Darcy-flow density run, the reference samples are already posterior pilot samples.  For a noisy OU query point $z$ at time $t$, the finite-reference OU weights are therefore the posterior-bank SNIS weights
\[
    w_i(z,t)
    =
    \frac{K_t^{\rm OU}(z\mid \alpha_i)}
    {\sum_{j=1}^{N_{\rm ref}}K_t^{\rm OU}(z\mid \alpha_j)} .
\]

The likelihood enters through the posterior pilot distribution and through the stored posterior score and energy, not through an additional likelihood tilt of the OU weights.  These weights define posterior Tweedie, TSI, scalar-blend, and LFGI score estimates by applying the same finite-reference identities to the posterior target.  The density-evaluation benchmark holds the resulting score estimator fixed and evaluates the normalized probability-flow density on held-out posterior samples.

\subsection{Darcy-flow target}
\label{app:darcy-target-details}

The unknown is a log-permeability field on $D=[0,1]^2$.  The pressure satisfies
\[
    -\nabla\!\cdot\{k(x;\alpha)\nabla p(x;\alpha)\}=1,
    \qquad x\in D,
    \qquad p|_{\partial D}=0.
\]
The permeability is parameterized as $k(x;\alpha)=\exp(m(x;\alpha))$, where
\[
    m(x;\alpha)=\sum_{j=1}^{32}\alpha_j\phi_j(x),
    \qquad
    \alpha\sim\N(0,I_{32}).
\]
The basis functions are the leading modes of an exponential covariance kernel with correlation length $\ell=0.1$ and prior scale $1.0$, discretized on a $32\times32$ grid and truncated to $32$ latent coefficients.  The forward solve uses a five-point finite-difference discretization on the interior grid, with harmonic face averages for the permeability.

\subsection{Observation model and held-out geometry}
\label{app:darcy-observation-model}

The likelihood observes the pressure at $120$ randomly selected interior grid locations.  A disjoint set of $30$ interior locations is retained for held-out predictive diagnostics. With observation operator $S$ and synthetic truth $\alpha_\star$, the observed training data are
\[
    y_{\rm obs}=S p(\cdot;\alpha_\star)+\eta,
    \qquad
    \eta\sim\N(0,\sigma^2 I),
    \qquad
    \sigma=10^{-3}.
\]

The posterior energy used in the Darcy-flow density benchmark of \cref{subsec:density-experiments} is
\[
    E(\alpha)
    =
    \frac12\|\alpha\|^2
    +
    \frac{1}{2\sigma^2}\|S p(\cdot;\alpha)-y_{\rm obs}\|^2
    \quad
    \text{up to an additive constant.}
\]
This target is nonlinear because the pressure solve depends on $\exp(m)$, while the small observation noise and sparse interior measurements produce a posterior with strong anisotropic curvature in coefficient space.

\subsection{Posterior differential information and Gauss--Newton precision}
\label{app:darcy-gn-precision-details}

The target score at $t=0$ is available in coefficient coordinates,
\[
    \nabla_\alpha\log\tilde p(\alpha)
    =
    -\alpha+\nabla_\alpha\log L(\alpha),
\]
where the likelihood gradient is computed by differentiating the JAX forward solve.  For LFGI in this inverse problem, we use the prior-plus-Gauss--Newton observed-information proxy
\[
    P^{\GN}(\alpha)
    =
    I_d+\frac{1}{\sigma^2}J(\alpha)^\top J(\alpha),
    \qquad
    J(\alpha)=\nabla_\alpha\{S p(\cdot;\alpha)\}.
\]
Because $P^{\GN}(\alpha)\succeq I_d$, the shifted matrix $\alphat^2I_d+\gammat P^{\GN}(\alpha)$ is separated from zero before averaging, so the LFGI resolvent has no negative-curvature poles in this proxy.  It drops residual second-derivative terms from the exact likelihood Hessian, while the posterior score and energy still use the nonlinear likelihood itself.  Accordingly, the density table labels the method ``LFGI--GN'' to indicate that the estimator is LFGI with a Gauss--Newton precision proxy used to construct the gate.

\subsection{Pilot-bank and density-evaluation setup}
\label{app:darcy-pilot-setup}

\Cref{tab:darcy-density-results} uses adjoint-gradient MALA posterior banks.  Under independent score/gate coupling, the source bank contains $2000=1000+1000$ retained posterior samples: $1000$ are assigned to the score-signal bank and $1000$ to the gate bank.  A separate MALA-EVAL run supplies $1000$ held-out held-out density-evaluation samples.  Each retained posterior sample is produced by a MALA chain with $600$ transition steps, $200$ burn-in steps, step size $5\times10^{-5}$, and no additional thinning.  The density benchmark constructs three roles from posterior pilot samples:
\begin{enumerate}[leftmargin=*]
    \item a score-signal bank for finite-reference Tweedie/TSI score signals;
    \item a gate bank for LFGI--GN, Uniform Scalar/Matrix Blend schedules, matrix-blend, or Scalar Blend gate estimation;
    \item a density-evaluation bank, denoted MALA-EVAL, on which $\log\qpf$ and the true unnormalized posterior energy are both evaluated.
\end{enumerate}
Each seeded run uses $N_{\rm signal}=N_{\rm gate}=N_{\rm eval}=1000$ and integrates the probability-flow density equation over $t\in[10^{-2.5},5]$ using $64$ time steps.  Reported density comparisons use independent score, gate, and evaluation banks, with central affine diagnostics computed on the $3$--$97\%$ energy band.  The PF sensitivity diagnostic varies only $N_{\rm PF}$ and $t_{\min}$ for LFGI--GN and Tweedie; it is a numerical sanity check rather than a broad hyperparameter sweep.

\subsection{Known-normalization calibration targets}
\label{app:known-z-calibration-target}

The known-normalization controls in \cref{subsec:known-z-calibration} use the same probability-flow density-evaluation harness as the Darcy-flow density-evaluation benchmark, but replace the PDE likelihood by analytic inverse problems with closed-form posterior normalization.  All three runs use a standard Gaussian prior in $d=8$, Gaussian-noise scale $\sigma=0.35$, and independent score/gate/evaluation banks with
\[
    N_{\rm signal}=N_{\rm gate}=N_{\rm eval}=2000,
\]
probability-flow integration over $t\in[10^{-2.5},5]$ with $64$ time steps, and the density diagnostics in \cref{app:density-metrics}.  The linear-Gaussian and shared-geometry mixture controls use an $8\times 8$ deterministic linear forward map $A$ with singular values geometrically spaced from $4$ to $4/60$.  The misaligned-curvature mixture uses component-dependent forward maps with the same singular spectrum but different right-singular frames.  In all analytic targets, $L(x)$ omits Gaussian observation normalizing constants; the analytic $Z$ and the tabulated log-evidence errors use the same convention.

The first analytic calibration target is the linear-Gaussian positive control:
\[
    x\sim \N(0,I_d),
    \qquad
    L(x)=\exp\left\{-\frac{1}{2\sigma^2}\|Ax-y\|^2\right\}.
\]
With
\[
    \Lambda=I_d+\sigma^{-2}A^\top A,
    \qquad
    h=\sigma^{-2}A^\top y,
    \qquad
    m=\Lambda^{-1}h,
\]
the posterior is $\N(m,\Lambda^{-1})$ and the benchmark evidence is
\[
    \log Z
    =
    -\frac{1}{2\sigma^2}\|y\|^2
    +\frac12 h^\top \Lambda^{-1}h
    -\frac12\log\det\Lambda .
\]
The source and evaluation banks are exact posterior draws.  MAP--Laplace is exact in the linear-Gaussian control because the posterior is Gaussian and the negative-log posterior Hessian is constant.

The second analytic calibration target is a four-component mixture.  It keeps the same prior and forward operator, but replaces the likelihood by a four-component ambiguous inverse problem,
\[
    L(x)
    =
    \sum_{k=1}^4 \pi_k
    \exp\left\{-\frac{1}{2\sigma^2}\|Ax-y_k\|^2\right\},
    \qquad
    \pi_k=\frac14 .
\]
The four observations are generated from separated latent centers in the first two right-singular directions of $A$: the signs are $(+,+),(+,-),(-,+),(-,-)$ and the component-separation parameter is $2.0$.  For each component, define
\[
    \Lambda_k=I_d+\sigma^{-2}A^\top A,
    \qquad
    h_k=\sigma^{-2}A^\top y_k,
    \qquad
    m_k=\Lambda_k^{-1}h_k,
\]
and
\[
    \log Z_k
    =
    -\frac{1}{2\sigma^2}\|y_k\|^2
    +\frac12 h_k^\top \Lambda_k^{-1}h_k
    -\frac12\log\det\Lambda_k .
\]
Then
\[
    Z=\sum_{k=1}^4\pi_k Z_k,
    \qquad
    p(x\mid y) = \sum_{k=1}^4 \tau_k \N(x;m_k,\Lambda_k^{-1}),
    \qquad
    \tau_k=\frac{\pi_k Z_k}{\sum_{\ell=1}^4\pi_\ell Z_\ell}.
\]
The source and evaluation banks are exact draws from the posterior mixture.  The MAP--Laplace row in \cref{tab:known-z-mixture-calibration} is a single local Gaussian approximation, so it is not exact for the mixture target.

The third analytic calibration target introduces misaligned curvature.  It keeps the standard Gaussian prior and finite Gaussian-mixture evidence, but gives each component its own forward map,
\[
    L(x)
    =
    \sum_{k=1}^4 \pi_k
    \exp\left\{-\frac{1}{2\sigma^2}\|A_kx-y_k\|^2\right\},
    \qquad
    \pi_k=\frac14 .
\]
Write a singular-value decomposition $A_k=U_k S V_k^\top$.  The matrices share the same singular values in $S$, but the right-singular frames $V_k$ differ across components.  For each component,
\[
    \Lambda_k=I_d+\sigma^{-2}A_k^\top A_k,
    \qquad
    h_k=\sigma^{-2}A_k^\top y_k,
    \qquad
    m_k=\Lambda_k^{-1}h_k,
\]
and
\[
    \log Z_k
    =
    -\frac{1}{2\sigma^2}\|y_k\|^2
    +\frac12 h_k^\top \Lambda_k^{-1}h_k
    -\frac12\log\det\Lambda_k .
\]
The evidence and posterior mixture are again
\[
    Z=\sum_{k=1}^4\pi_k Z_k,
    \qquad
    p(x\mid y)=\sum_{k=1}^4\tau_k\N(x;m_k,\Lambda_k^{-1}),
    \qquad
    \tau_k=\frac{\pi_k Z_k}{\sum_\ell \pi_\ell Z_\ell}.
\]
Unlike the shared-geometry target, the component precision matrices $\Lambda_k$ have different stiff eigenspaces.  This makes the population gate mode-dependent while preserving exact posterior sampling and exact evidence for \cref{tab:known-z-misaligned-mixture-calibration}.

\subsection{Pilot bank split}
\label{app:pilot-split-setup}

The density/evidence diagnostics use held-out density-evaluation points.  Tied score/gate banks are useful for cost studies.  Reported $\log\qpf$ diagnostics use an independent MALA-EVAL bank or a leave-one-out analogue as a guard against in-sample calibration artifacts.


\section{Auxiliary Darcy-Flow Density-Energy Grid}
\label{app:darcy-density-grid}

\Cref{fig:darcy-density-energy-grid} gives the held-out density-energy and affine-residual panels corresponding to \cref{tab:darcy-density-results}.  The table in the main text is the primary comparison; this figure records the row-by-row calibration geometry on the same central $3$--$97\%$ posterior-energy band used for the affine diagnostics.

\clearpage
\begin{figure}[p]
\centering
\includegraphics[height=0.92\textheight]{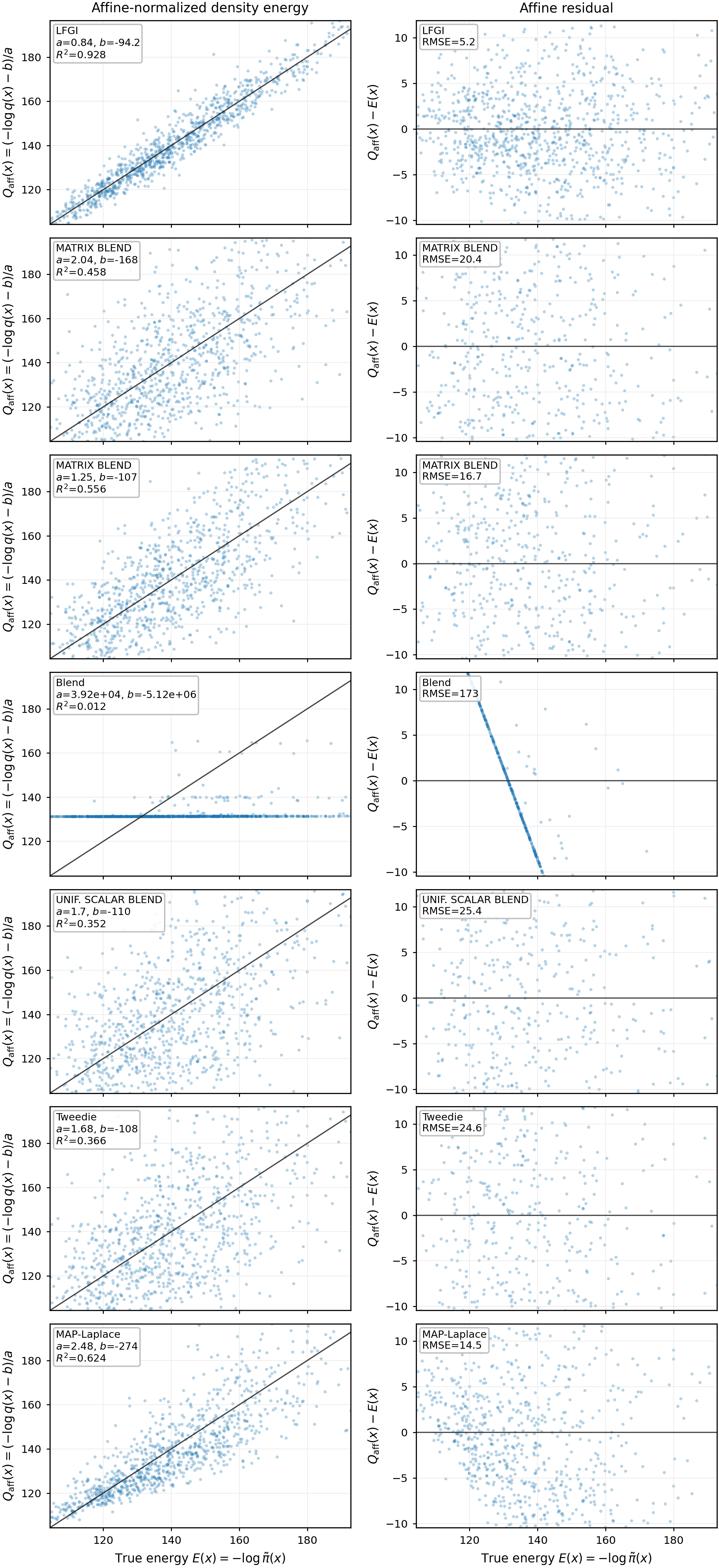}
\caption{Darcy-flow density-evaluation diagnostic on held-out MALA-EVAL samples.  Left: true posterior energy against affine-normalized probability-flow energy on the central $3$--$97\%$ band.  Right: affine residuals.  LFGI--GN is tightest; MAP--Laplace captures part of the trend; the uniform and local blend rows remain more diffuse.}
\label{fig:darcy-density-energy-grid}
\end{figure}
\clearpage

\section{Metric Definitions and Evaluation Conventions}
\label{app:metrics}

This appendix gives the metric definitions used in \cref{sec:validation-plan}.  The notation below matches the implementation.  Let $X^{\rm gen}=\{x_i^{\rm gen}\}_{i=1}^n$ denote generated samples from a method and let $X^{\rm ref}=\{x_j^{\rm ref}\}_{j=1}^m$ denote held-out reference samples from the target.  When a finite sample cap is used, both sets are first subsampled to the cap used by the corresponding script.  Reported means and standard deviations in the tables are computed over independent repetitions of the whole benchmark run, not only over bootstrap resamples of a fixed run.  Runtime is wall-clock time for estimator construction and reverse sampling under the same benchmark script.

\subsection{Sample-discrepancy metrics}
\label{app:metric-sample-discrepancies}

\begin{definition}[Maximum mean discrepancy]
\label[definition]{def:mmd}
For a positive definite kernel $k$, the biased squared maximum mean discrepancy (MMD) estimator is the empirical diagnostic \citep{gretton2012kernel}
\[
    \widehat{\operatorname{MMD}}_k^2(X^{\rm gen},X^{\rm ref})
    =
    \frac1{n^2}\sum_{i,i'} k(x_i^{\rm gen},x_{i'}^{\rm gen})
    +
    \frac1{m^2}\sum_{j,j'} k(x_j^{\rm ref},x_{j'}^{\rm ref})
    -
    \frac{2}{nm}\sum_{i,j} k(x_i^{\rm gen},x_j^{\rm ref}).
\]
We use RBF kernels $k_\sigma(x,y)=\exp\{-\|x-y\|^2/(2\sigma^2)\}$ and report the average of this quantity over a multiscale bandwidth grid.  In the fixed-budget dashboard script the grid is $\sigma\in\{0.5,1,2,5,10\}$ after the samples are represented in the benchmark coordinates.  In the validation-sweep script the grid is $\{0.5\widehat\sigma,\widehat\sigma,2\widehat\sigma,4\widehat\sigma\}$, where $\widehat\sigma$ is the median cross-distance between the two evaluated sample sets.
\end{definition}

\begin{definition}[Kernel Stein discrepancy]
\label[definition]{def:ksd}
For a target score $s_0(x)=\nabla\log p_0(x)$ and an RBF kernel $k_h(x,y)=\exp\{-\|x-y\|^2/(2h^2)\}$, the Stein kernel used in the validation metrics is the standard kernel Stein discrepancy (KSD) kernel \citep{liu2016ksd,chwialkowski2016kernelgof}:
\[
\begin{aligned}
    u_{s_0}(x,y)
    &=
    k_h(x,y)\,s_0(x)^\top s_0(y)
    +s_0(x)^\top \nabla_y k_h(x,y)
    +s_0(y)^\top \nabla_x k_h(x,y) \\
    &\quad
    +\operatorname{tr}\{\nabla_x\nabla_y k_h(x,y)\}.
\end{aligned}
\]
Equivalently, for the RBF kernel this is evaluated as
\[
    u_{s_0}(x,y)
    =
    k_h(x,y)
    \left[
        s_0(x)^\top s_0(y)
        -\frac{(s_0(x)-s_0(y))^\top(x-y)}{h^2}
        +\frac{d}{h^2}
        -\frac{\|x-y\|^2}{h^4}
    \right].
\]
The reported KSD is
\[
    \widehat{\operatorname{KSD}}(X^{\rm gen},p_0)
    =
    \left(\frac1{n^2}\sum_{i,j}u_{s_0}(x_i,x_j)\right)^{1/2},
\]
with $h$ set by the median pairwise distance unless a fixed bandwidth is specified.  For ground-truth floor rows, the reported KSD is the same finite-sample diagnostic applied to independent reference samples; it should be read as a reference-sample floor rather than as a learned-estimator score diagnostic.
\end{definition}

\begin{definition}[Sliced Kolmogorov--Smirnov distance]
\label[definition]{def:sliced-ks}
For a direction $v\in\mathbb S^{d-1}$, let the one-dimensional Kolmogorov--Smirnov statistic \citep{kolmogorov1933sulla,smirnov1948table} be
\[
    \operatorname{KS}_v(X^{\rm gen},X^{\rm ref})
    =
    \sup_{a\in\mathbb R}
    \left|
        \widehat F_{\{v^\top x_i^{\rm gen}\}}(a)
        -
        \widehat F_{\{v^\top x_j^{\rm ref}\}}(a)
    \right|.
\]
The sliced KS metric is the average of $\operatorname{KS}_v$ over randomly sampled unit directions.  The fixed-budget tables use this as the robust sample-quality discrepancy for the GMM and funnel targets because it is stable in moderate dimension and sensitive to one-dimensional marginal mismatch.
\end{definition}

\begin{definition}[Entropic Wasserstein-2]
\label[definition]{def:entropic-w2}
For overlap with the diffusion-path sequential Monte Carlo (DPSMC) benchmark suite, the script also implements an entropy-regularized Wasserstein diagnostic in the Sinkhorn form of \citet{cuturi2013sinkhorn}. With uniform empirical weights and squared Euclidean cost $C_{ij}=\|x_i^{\rm gen}-x_j^{\rm ref}\|^2$, define
\[
    \widehat W_{2,\varepsilon}^2(X^{\rm gen},X^{\rm ref})
    =
    \min_{\Pi\in\Gamma(a,b)}
    \sum_{i,j}\Pi_{ij} C_{ij}
    +
    \varepsilon\sum_{i,j}\Pi_{ij}(\log \Pi_{ij}-1),
\]
where $a$ and $b$ are uniform weights on the two finite sample sets.  The reported value is $\widehat W_{2,\varepsilon}=\sqrt{\widehat W_{2,\varepsilon}^2}$ with $\varepsilon=0.05$ in the synthetic validation targets.
\end{definition}

\begin{definition}[KDE negative log likelihood (NLL)]
\label[definition]{def:nll}\label[definition]{def:kde-nll}
For fixed-budget sample-quality evaluation, KDE NLL is computed by fitting a Gaussian-kernel density estimator $\widehat p_{\rm KDE}$ \citep{silverman1986density} to generated samples and evaluating
\[
    \operatorname{NLL}_{\rm KDE}(X^{\rm gen};X^{\rm ref})
    :=
    -\frac1{|X^{\rm ref}|}\sum_{x\in X^{\rm ref}}\log \widehat p_{\rm KDE}(x).
\]
The KDE bandwidth is the rule $\max\{n^{-1/(d+4)},0.05\}$ used in the dashboard script.  The diagnostic is reported as NLL in the fixed-budget sampling tables; lower values indicate that generated samples assign higher KDE density to independent reference samples.
\end{definition}

\begin{definition}[KDE effective sample size]
\label[definition]{def:kde-ess}
The KDE ESS diagnostic fits the same type of KDE as in \cref{def:kde-nll} to generated samples, forms normalized weights
\[
    \omega_i
    \propto
    \exp\{\log p_0(x_i)-\log \widehat p_{\rm KDE}(x_i)\},
    \qquad
    \sum_i\omega_i=1,
\]
and reports the normalized effective sample size diagnostic \citep{kong1994sequential,martino2017effective}
\[
    \operatorname{ESS}_{\rm KDE}(X^{\rm gen})
    =
    \frac{1}{n\sum_{i=1}^n\omega_i^2}.
\]
This is a diagnostic for sample collapse and target-density agreement, not an importance-sampling correctness certificate.
\end{definition}

\subsection{Score-field and curl diagnostics}
\label{app:metric-score-curl}

\begin{definition}[Score RMSE]
\label[definition]{def:score-rmse}
For a finite time grid $\mathcal T$, the ideal noisy-score RMSE is
\[
    \operatorname{RMSE}_{\rm score}(\widehat s)
    =
    \left(
    \frac1{|\mathcal T|}
    \sum_{t\in\mathcal T}
    \E_{Y_t\sim p_t}
    \bigl[\|\widehat s(Y_t,t)-s_t(Y_t)\|^2\bigr]
    \right)^{1/2}.
\]
In analytic GMM validation sweeps, $s_t$ is the exact OU-marginal score.  When an exact OU-marginal score is not available in closed form or is not evaluated directly, $s_t$ is replaced by a high-reference SNIS--Tweedie proxy computed from a large held-out reference bank.  This proxy is intended as a consistent large-reference approximation to the OU-marginal score; it is not treated as an additional estimator baseline, and conclusions for the non-analytic benchmarks are also supported by sample-quality metrics that do not use this proxy.  In all cases the Monte Carlo estimate uses noisy states generated by drawing $X_0$ from held-out reference samples and then applying the OU corruption at the displayed time grid.
\end{definition}

\begin{definition}[Curl diagnostic]
\label[definition]{def:curl}
For a differentiable estimated score field $\widehat s(\cdot,t)$, define the skew Jacobian
\[
    \operatorname{skew}\nabla\widehat s(y,t)
    =
    \frac12\left(\nabla_y\widehat s(y,t)-\nabla_y\widehat s(y,t)^\top\right).
\]
The absolute curl diagnostic is the trajectory-averaged RMS quantity
\[
    \operatorname{curl}_{\rm abs}
    =
    \left(
    \frac1N\sum_{(y,t)\in\mathcal P}
    \|\operatorname{skew}\nabla\widehat s(y,t)\|_F^2
    \right)^{1/2},
\]
where $\mathcal P$ is a subsampled set of points along estimator-generated reverse trajectories.  The relative curl reported in the tables is
\[
    \operatorname{curl}_{\rm rel}
    =
    \frac{\operatorname{curl}_{\rm abs}}
    {\left(N^{-1}\sum_{(y,t)\in\mathcal P}\|\nabla_y\widehat s(y,t)\|_F^2\right)^{1/2}}.
\]
The implementation estimates these Frobenius norms without materializing Jacobians, using centered finite differences and independent Gaussian bilinear probes.  Thus curl is an implementation-level structural diagnostic for approximate integrability of the learned score field, not a theorem-level risk metric or an objective optimized by LFGI.
\end{definition}

\subsection{Gate-capture diagnostics}
\label{app:metric-gate-capture}

\begin{definition}[Gate-capture error]
\label[definition]{def:gate-capture}
The gate sweep compares an empirical gate $\widehat G$ with the population gate $\Gstar$; in plots where $\Gstar$ is not analytic, it is approximated using a large held-out reference bank.  Let
\[
    \Cdd(y,t)=\E\bigl[(c-b)(c-b)^\top\mid Y_t=y\bigr]
\]
be the conditional disagreement covariance.  The risk-weighted gate error is
\[
    \operatorname{Err}_{\rm risk}(\widehat G,\Gstar;y,t)
    =
    \operatorname{tr}\left
    \{(\widehat G-\Gstar)\Cdd(y,t)(\widehat G-\Gstar)^\top\right\}.
\]
This is the excess-risk geometry induced by \cref{prop:residual-risk}, specifically \cref{eq:excess-risk}.  The displayed risk-weighted gate-capture curves in Appendix \cref{app:gate-capture-breakdown} normalize the mean of this quantity by $\operatorname{tr}\Cdd$.  The relative Frobenius error is the auxiliary matrix-scale diagnostic
\[
    \operatorname{Err}_{F}(\widehat G,\Gstar)
    =
    \frac{\|\widehat G-\Gstar\|_F}{\|\Gstar\|_F}.
\]
The large-reference approximations to $\Gstar$ are computed using a held-out reference bank, while the plotted finite-reference gates use the stated gate-bank size.
\end{definition}

\section{Auxiliary Computations and Worked Examples}
\label{app:auxiliary-computations-worked-examples}

The auxiliary computations below give the model-specific formulas and finite-reference comparisons used by the main text.  The ordering follows the order in which the corresponding objects appear in the main text.

\subsection{Why spatially varying matrix gates are needed: misaligned bimodal stiffness}
\label{subsec:misaligned-bimodal-gmm}

The residual mechanism from \cref{subsec:non-gaussian-residual} becomes relevant in targets where the risk-minimizing matrix gate itself varies with position.  Scalar Tweedie--TSI coefficients and spatially uniform matrix schedules can represent fixed attenuation patterns, but they cannot match a gate whose eigenspaces rotate with $y$.  A simple example is a two-component Gaussian mixture whose component precisions have the same singular spectrum but different eigenspaces.

\begin{definition}[Bimodal misaligned-stiffness GMM]
\label[definition]{def:bimodal-misaligned-gmm}
Let
\begin{equation*}
    p_0(x)
    =
    \pi_+\N(x;m_+,P_+^{-1})
    +
    \pi_-\N(x;m_-,P_-^{-1}),
    \qquad
    \pi_++\pi_-=1,
\end{equation*}
with
\begin{equation*}
    P_\pm=U_\pm\Lambda U_\pm^\top,
    \qquad
    \Lambda=\diag(\lambda_1,\ldots,\lambda_d),
\end{equation*}
where \(\Lambda\) is ill-conditioned and \(P_+\) and \(P_-\) are not simultaneously diagonalizable.  Equivalently, the stiff directions of the two components are misaligned.
\end{definition}

For this target, the OU posterior has a closed-form mixture representation.  Define
\begin{equation*}
    \Sigma^Y_k(t):=\gammat I_d+\alphat^2P_k^{-1},
    \qquad k\in\{+,-\},
\end{equation*}
component responsibilities
\begin{equation*}
    \tau_k(y,t)
    :=
    \frac{\pi_k\N(y;\alphat m_k,\Sigma^Y_k(t))}
         {\sum_{\ell\in\{+,-\}}\pi_\ell\N(y;\alphat m_\ell,\Sigma^Y_\ell(t))},
\end{equation*}
and posterior component covariance and mean
\begin{equation*}
\Sigma^{X\mid Y}_k(t)
    :=
    \left(P_k+\frac{\alphat^2}{\gammat}I_d\right)^{-1}
    =
    \gammat(\alphat^2I_d+\gammat P_k)^{-1},
    \qquad
    \mu_k(y,t)
    :=
    \Sigma^{X\mid Y}_k(t)\left(P_km_k+\frac{\alphat}{\gammat}y\right).
\end{equation*}
Then
\begin{equation*}
    p_0(x\mid Y_t=y)
    =
    \sum_{k\in\{+,-\}}\tau_k(y,t)\N(x;\mu_k(y,t),\Sigma^{X\mid Y}_k(t)).
\end{equation*}
The time-\(t\) marginal score is also explicit.  If
\begin{equation*}
    s^{Y}_{t,k}(y):=-\Sigma^Y_k(t)^{-1}(y-\alphat m_k),
\end{equation*}
then
\begin{equation*}
s_t(y)=\sum_{k\in\{+,-\}}\tau_k(y,t)s^{Y}_{t,k}(y).
\end{equation*}

To isolate the obstruction for a spatially uniform matrix gate, augment the mixture with its latent component label \(K\).  The label-resolved score contribution \(s_K(x)/\alphat\), where \(s_k(x)=-P_k(x-m_k)\), has conditional mean \(s_t(y)\).  The implemented estimator instead uses the marginal contribution \(s_0(x)/\alphat=\E[s_K(x)/\alphat\mid X=x]\).  The two gates are close in component-local regions: the marginal gate replaces \(P_\tau(y,t)\) by \(P_\tau(y,t)-C_{\rm ov}(y,t)\), where
\begin{equation}
\label{eq:gmm-marginal-overlap-correction}
    C_{\rm ov}(y,t)
    :=
    \E\!\left[
        \omega_+(X_0)\omega_-(X_0)
        \{s_+(X_0)-s_-(X_0)\}\{s_+(X_0)-s_-(X_0)\}^\top
        \mid Y_t=y
    \right]
\end{equation}
and \(\omega_k(x)=\Pr(K=k\mid X_0=x)\).  Thus the correction is controlled by responsibility overlap; \cref{app:proof-bimodal-marginal-overlap} gives the verification.

\begin{proposition}[Closed-form label-resolved spatially varying gate for the misaligned bimodal GMM]
\label[proposition]{prop:bimodal-local-gate}
Let
\begin{equation*}
    P_\tau(y,t):=\sum_{k\in\{+,-\}}\tau_k(y,t)P_k,
    \qquad
    A_k(t):=\alphat^2I_d+\gammat P_k,
    \qquad
    B_k(t):=-\frac{1}{\alphat\gammat}A_k(t).
\end{equation*}
Conditioned on \(K=k\) and \(Y_t=y\), write \(X_0=\mu_k(y,t)+u_k\), with \(u_k\sim\N(0,\Sigma^{X\mid Y}_k(t))\).  For the label-resolved score contribution,
\begin{equation*}
b(X_0;y,t)-s_t(y)
    =
    a_k(y,t)+\frac{\alphat}{\gammat}u_k,
    \qquad
    \delta(X_0;y,t)=B_k(t)u_k,
\end{equation*}
where
\begin{equation*}
    a_k(y,t):=s^{Y}_{t,k}(y)-s_t(y).
\end{equation*}
Consequently,
\begin{equation*}
    \Cdd(y,t)
    =
    \frac{1}{\gammat}I_d+\frac{1}{\alphat^2}P_\tau(y,t),
    \qquad
    \Cebd(y,t)=-\frac{1}{\gammat}I_d,
\end{equation*}
and the population gate for the label-resolved OU-marginal score identity using the latent-component target-score signal is

\begin{equation}
\label{eq:bimodal-local-gate}
    G_{\rm lab}(y,t)
    =
    \alphat^2
    \left(\alphat^2I_d+\gammat P_\tau(y,t)\right)^{-1}.
\end{equation}
\end{proposition}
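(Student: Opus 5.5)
The plan is to work entirely in the augmented model $(K,X_0,Y_t)$ and compute everything component by component. On $\{K=k\}$, $X_0$ is Gaussian, so the structure is that of \cref{prop:gaussian-gate} and \cref{prop:gaussian-centered-cancellation}. Throughout, the TSI signal is the label-resolved one, $c_{\rm lab}(x,k;t)=s_k(x)/\alphat$ with $s_k(x)=-P_k(x-m_k)$. Accordingly $\delta=c_{\rm lab}-b$, and all conditional moments are taken under the joint posterior of $(K,X_0)$ given $Y_t=y$. This posterior is the mixture displayed before the proposition: $K=k$ with probability $\tau_k(y,t)$, and then $X_0\sim\N(\mu_k,\Sigma^{X\mid Y}_k)$.

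\textbf{Step 1: pointwise representations.} Substitute $X_0=\mu_k+u_k$ into the two signals.

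For the Tweedie residual, $b-s_t=(\alphat\mu_k-y)/\gammat-s_t+(\alphat/\gammat)u_k$. The constant part must equal $a_k=s^Y_{t,k}-s_t$, which amounts to checking the single-Gaussian Tweedie identity $(\alphat\mu_k-y)/\gammat=s^Y_{t,k}(y)$. This is the Tweedie identity \eqref{eq:tweedie-identity} applied to the Gaussian $\N(m_k,P_k^{-1})$, whose OU marginal is $\N(\alphat m_k,\Sigma^Y_k)$. It can also be checked by a Woodbury manipulation using $\Sigma^{X\mid Y}_k=\gammat A_k^{-1}$.

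For the disagreement, write $\delta=-P_k(\mu_k+u_k-m_k)/\alphat-(\alphat\mu_k+\alphat u_k-y)/\gammat$. The $u_k$-coefficient is $-(P_k/\alphat+\alphat/\gammat)=-A_k/(\alphat\gammat)=B_k$. The constant term is $\alphat^{-1}\bigl[-P_k(\mu_k-m_k)-(\alphat^2/\gammat)\mu_k+(\alphat/\gammat)y\bigr]$, and it vanishes by the defining equation $(P_k+\alphat^2\gammat^{-1}I)\mu_k=P_km_k+\alphat\gammat^{-1}y$. This agrees with \cref{lem:posterior-score-disagreement}: within component $k$, $\delta$ is the posterior score of $\N(\mu_k,\Sigma^{X\mid Y}_k)$ divided by $\alphat$.

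\textbf{Step 2: moments.} Since $\E[u_k\mid K=k,y]=0$, the constants $a_k$ drop out of every cross moment with $\delta$. Hence
\[
\Cebd=\sum_k\tau_k\,(\alphat/\gammat)\,\Sigma^{X\mid Y}_kB_k^\top=\sum_k\tau_k\,(\alphat/\gammat)\,\gammat A_k^{-1}\bigl(-A_k/(\alphat\gammat)\bigr)=-\gammat^{-1}I_d,
\]
using the symmetry of $A_k$. Similarly,
\[
\Cdd=\sum_k\tau_kB_k\Sigma^{X\mid Y}_kB_k^\top=\sum_k\tau_k\,\frac{A_k}{\alphat^2\gammat}=\frac{1}{\gammat}I_d+\frac{1}{\alphat^2}P_\tau .
\]

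\textbf{Step 3: the gate.} $\Cdd$ is positive definite because each $P_k\succ0$, so \cref{prop:normal-equation} applies and gives $G_{\rm lab}=-\Cebd\Cdd^{-1}=\gammat^{-1}\bigl(\gammat^{-1}I+\alphat^{-2}P_\tau\bigr)^{-1}=\alphat^2(\alphat^2I+\gammat P_\tau)^{-1}$.

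Before this step I would note that the risk framework carries over to the augmented signal. $\E[c_{\rm lab}\mid y]=\sum_k\tau_k s^Y_{t,k}=s_t$, because the TSI identity \eqref{eq:tsi-identity} holds for each Gaussian component. So $\E[\delta\mid y]=0$, and the normal-equation derivation goes through unchanged.

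The only step requiring care is the verification in Step 1 that the constant in $b-s_t$ is exactly $a_k$ and that the constant in $\delta$ vanishes. Both reduce to single-Gaussian conditioning identities, and I would handle them by citing the Gaussian Tweedie and TSI identities rather than by expanding inverses.
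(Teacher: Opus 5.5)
Your proposal is correct and follows the paper's proof essentially step for step. You condition on $K=k$, substitute $X_0=\mu_k+u_k$, drop the constants $a_k$ from all cross moments because $u_k$ is centered, sum the $\tau_k$-weighted Gaussian moments using $\Sigma^{X\mid Y}_k=\gammat A_k^{-1}$, and solve the normal equation, exactly as the paper does. Your explicit checks fill in what the paper compresses into \emph{direct substitution}: that the constant in $b-s_t$ equals $a_k$ (single-component Tweedie), that the constant in $\delta$ vanishes (the defining equation of $\mu_k$), and that $\E[c_{\rm lab}\mid Y_t=y]=s_t(y)$.
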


Proof deferred to \cref{app:proof-prop-bimodal-local-gate}.

In \cref{eq:bimodal-local-gate}, the responsibility-weighted precision \(P_\tau(y,t)\) changes with \(y\).  Near component \(k\), \(G_{\rm lab}(y,t)\) is close to
\[
    F_k(t):=\alphat^2(\alphat^2I_d+\gammat P_k)^{-1}.
\]
If \(P_+\) and \(P_-\) are not simultaneously diagonalizable, then \(F_+(t)\) and \(F_-(t)\) have different eigenspaces whenever the spectral filter separates stiff and soft eigenvalues.  Any fixed spatially uniform matrix gate \(G(t)\) must miss at least one of them, since
\[
    \max_{k\in\{+,-\}}\|G(t)-F_k(t)\|_{\rm op}
    \ge \frac12\|F_+(t)-F_-(t)\|_{\rm op}.
\]
When \(C_{\rm ov}\) in \cref{eq:gmm-marginal-overlap-correction} is small, \(G_{\rm marg}\) is close to the corresponding \(F_k\).  The matrix separation above then gives excess score-estimation risk through \cref{eq:excess-risk}.  A scalar blend fails for the simpler reason already visible in \cref{cor:scalar-failure}: it cannot apply different attenuation along stiff and soft directions.  The centered primal Matrix Blend estimator is spatially varying in $(y,t)$ and targets the same normal-equation gate as LFGI; its difficulty is finite-reference estimation, not population expressivity.

\subsection{Closed-form residual scale in the bimodal stiffness example}
\label{subsec:bimodal-primal-failure-scale}

In the bimodal model, centered primal regression is difficult when the empirical residual--disagreement cross moment $\widehat C_{r_\star\delta}$ in \cref{eq:centered-regression-residual-risk} is large.  Continue using the component-conditioned score notation of \cref{prop:bimodal-local-gate}, and write
\[
    L_k(y,t)
    :=
    \frac{\alphat}{\gammat}I_d
    -
    \frac{1}{\alphat\gammat}G_{\rm lab}(y,t)A_k(t).
\]
Then the label-resolved gate residual decomposes as
\[
    r_\star(X_0;y,t)=a_K(y,t)+L_K(y,t)u_K .
\]
The term $a_K$ is a between-component score residual; the term $L_Ku_K$ is a stiffness-mismatch residual.  Both vanish in the single-Gaussian case, but they are generically nonzero in responsibility-overlap regions when the component stiffness frames are misaligned.  \Cref{eq:centered-regression-residual-risk} depends on this residual through the empirical cross moment $\widehat C_{r_\star\delta}$.

Let
\[
    Q_k(y,t):=B_k(t)^\top \Cdd(y,t)^{-1}B_k(t),
    \qquad
    R_k(y,t):=L_k(y,t)^\top L_k(y,t).
\]
The closed-form residual-leakage scale is
\begin{proposition}[Residual-leakage scale in the bimodal stiffness model]
\label[proposition]{prop:bimodal-residual-leakage-scale}
Under the label-resolved bimodal model of \cref{prop:bimodal-local-gate},
\begin{equation}
\label{eq:bimodal-centered-primal-noise-scale}
\begin{aligned}
V_{\rm mix}^{\rm cen}(y,t)
    &:={\mathbb E}\left[
        \|r_\star(X_0;y,t)\delta(X_0;y,t)^\top \Cdd(y,t)^{-1/2}\|_F^2
        \mid Y_t=y
    \right]
    \\
    &=
    \sum_{k\in\{+,-\}}\tau_k
    \Bigl\{
        \|a_k\|^2\tr(Q_k\Sigma^{X\mid Y}_k)
        +
        \tr(R_k\Sigma^{X\mid Y}_k)\tr(Q_k\Sigma^{X\mid Y}_k)
        +
        2\tr(R_k\Sigma^{X\mid Y}_kQ_k\Sigma^{X\mid Y}_k)
    \Bigr\}.
\end{aligned}
\end{equation}
\end{proposition}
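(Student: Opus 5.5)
The plan is to reduce the Frobenius norm of a rank-one matrix to a product of two scalar quadratic forms, and then evaluate Gaussian moments component by component, using the latent-label representation from \cref{prop:bimodal-local-gate}.

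First, since $r_\star\delta^\top \Cdd^{-1/2}$ is the outer product of the vectors $r_\star$ and $\Cdd^{-1/2}\delta$, we have
\[
\|r_\star\delta^\top\Cdd^{-1/2}\|_F^2=\|r_\star\|^2\,\delta^\top\Cdd^{-1}\delta .
\]
At fixed $(y,t)$ the matrix $\Cdd(y,t)$ is deterministic; it is the label-resolved one from \cref{prop:bimodal-local-gate}. So the conditional expectation is the expectation of a product of two quadratic-type forms in $X_0$.

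Next, I condition on the latent label $K$ and apply the tower property:
\[
\E[\cdot\mid Y_t=y]=\sum_k\tau_k(y,t)\,\E[\cdot\mid K=k,Y_t=y].
\]
Given $K=k$, \cref{prop:bimodal-local-gate} and the decomposition in \cref{subsec:bimodal-primal-failure-scale} give
\[
r_\star=a_k+L_ku_k,\qquad \delta=B_ku_k,\qquad u_k\sim\N(0,\Sigma^{X\mid Y}_k).
\]
Here $a_k$ and $L_k$ are deterministic at fixed $(y,t)$. Hence $\delta^\top\Cdd^{-1}\delta=u_k^\top Q_ku_k$, and
\[
\|r_\star\|^2=\|a_k\|^2+2a_k^\top L_ku_k+u_k^\top R_ku_k .
\]
I expand the product into three terms.
\begin{itemize}
\item The constant term gives $\|a_k\|^2\,\E[u^\top Q_ku]=\|a_k\|^2\tr(Q_k\Sigma^{X\mid Y}_k)$.
\item The cross term $2a_k^\top L_k u\,u^\top Q_k u$ is cubic in a centered Gaussian, so it vanishes.
\item The quartic term $\E[u^\top R_k u\,u^\top Q_k u]$ is evaluated by Isserlis' theorem. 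This requires $R_k$ and $Q_k$ to be symmetric. $R_k=L_k^\top L_k$ is symmetric by construction, and $Q_k$ is symmetric because $B_k=-(\alphat\gammat)^{-1}A_k$ and $\Cdd$ are symmetric. Isserlis then gives $\tr(R_k\Sigma)\tr(Q_k\Sigma)+2\tr(R_k\Sigma Q_k\Sigma)$.
\end{itemize}

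Summing the three contributions with the weights $\tau_k$ yields \eqref{eq:bimodal-centered-primal-noise-scale}.

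The main point needing care is bookkeeping rather than difficulty. One must confirm that the residual decomposition $r_\star=a_K+L_Ku_K$ uses the label-resolved gate $G_{\rm lab}$ and the label-resolved signal $\delta$, so that $a_k$, $L_k$, $B_k$ and $\Cdd$ are all nonrandom given $(K,Y_t)$. One must also confirm that the fourth-moment identity is applied with the symmetric matrices $R_k$ and $Q_k$ rather than with their possibly asymmetric factors $L_k$ and $B_k$.
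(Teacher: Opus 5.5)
Your proposal is correct and follows the paper's proof essentially step for step: condition on $K=k$, reduce the rank-one Frobenius norm to $\|a_k+L_ku_k\|^2\,u_k^\top Q_ku_k$, discard the cubic cross term as an odd centered-Gaussian moment, evaluate the quartic term with Isserlis' identity, and average over $K$ with weights $\tau_k(y,t)$. One small remark: $Q_k=B_k^\top\Cdd^{-1}B_k$ is symmetric simply because $\Cdd^{-1}$ is, so the symmetry of $B_k$ is not actually needed.
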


Consequently the iid centered-primal perturbation has the order-of-magnitude variance scale
\[
    \E\left[
        \|\widehat C_{r_\star\delta}\Cdd^{-1/2}\|_F^2
        \mid Y_t=y
    \right]
    \asymp
    \frac{V_{\rm mix}^{\rm cen}(y,t)}{N},
\]
up to centering and self-normalization corrections.  The excess risk then multiplies the displayed scale $V_{\rm mix}^{\rm cen}(y,t)/N$ by the empirical inverse-amplification factor in \cref{eq:centered-residual-amplification-bound}.  The misaligned GMM exposes the amplification of a nonzero $\widehat C_{r_\star\delta}$ by the empirical inverse of $\widehat C_{\delta\delta}$.  Centered primal regression must estimate a residual-coupled covariance quotient, and the residual is largest exactly when responsibility overlap and stiffness-frame mismatch are active.  The proof of \cref{prop:bimodal-residual-leakage-scale} is deferred to \cref{app:proof-bimodal-residual-leakage}.

\subsection{Constant-Hessian Gaussian tie}
\label{subsec:constant-hessian-gaussian-tie}

The closed-form Gaussian formulas used in this subsection are collected in \cref{app:gaussian-formulas}.

In the singular Gaussian example, the LFGI Hessian-average error cancels exactly because the target observed information is constant.  The centered primal regression error also cancels exactly because the optimal linear relation holds pointwise; see \cref{prop:gaussian-centered-cancellation}.  Thus the Gaussian example is not a failure example against the centered primal estimator, and both matrix estimators recover the constant-Hessian gate exactly.

Let
\[
    p_0^\varepsilon=\N(m,P_\varepsilon^{-1}),
    \qquad
    P_\varepsilon\succ0,
    \qquad
    H_0(x)=P_\varepsilon
    \quad
    \text{for all }x.
\]
Then every nonempty finite-reference LFGI gate bank has
\begin{equation}
\label{eq:constant-hessian-lfgi-recovery}
    \Hhat_N(y,t)=P_\varepsilon,
    \qquad
    \Ghat_N(y,t)
    =
    \Gstareps(t)
    =
    \alphat^2
    \left(
        \alphat^2I_d+\gammat P_\varepsilon
    \right)^{-1}
\end{equation}
for every $y,t$ and every condition number $\kappa(P_\varepsilon)$.  Moreover, the centered primal regression gate satisfies $\widehat C_{\eb\delta}=-\Gstareps(t)\widehat C_{\delta\delta}$ for every finite weighted sample by the Gaussian cancellation in \cref{prop:gaussian-centered-cancellation}.  Hence it recovers the same gate on the empirical disagreement span, and recovers the full gate whenever $\widehat C_{\delta\delta}$ has full rank.

The constant-Hessian Gaussian case isolates what the exact cancellation does and does not show.  In that case, LFGI matches the population gate from any nonempty Hessian bank, while centered primal regression solves it from the exact pointwise linear relation once the empirical disagreement span identifies the relevant directions.  The LFGI advantage over centered primal regression is therefore not a Gaussian phenomenon.  It appears only when non-Gaussian residuals break the cancellation in \cref{prop:gaussian-centered-cancellation}.

\subsection{Residual-covariance inversion versus Hessian averaging}
\label{subsec:residual-coupling-regime}

The finite-reference comparison is between two error mechanisms: the centered residual-covariance quotient and the relative Hessian-average error.  LFGI and centered primal regression target the same population gate, but they differ in the finite-reference quantity that must be estimated.  Centered primal regression estimates a quotient of empirical moments,
\[
    \widehat G_{\rm cen}
    =
    -\widehat C_{\eb\delta}(\widehat C_{\delta\delta}+\lambdaridge I_d)^{-1},
\]
whereas LFGI estimates the conditional Hessian average \(H\) and then applies the deterministic map $\Psiop$
\[
    \Ghat_N
    =
    \alphat^2(\alphat^2 I_d+\gammat\Hhat_N)^{-1}.
\]
Thus the comparison is between the centered-primal error term $\widehat C_{r_\star\delta}\widehat C_{\delta\delta}^{-1}\Cdd^{1/2}$ in \cref{eq:centered-regression-residual-risk} and the LFGI Hessian-average error $\epsilon_H$ in \cref{eq:relative-hessian-concentration}.

For centered primal regression, the exact perturbation identity in \cref{eq:centered-regression-residual-risk} gives
\begin{equation*}
    \mathcal R(\widehat G_{\rm cen})-\mathcal R(\Gstar)
    =
    \nor{\widehat C_{r_\star\delta}(\widehat C_{\delta\delta}+\lambdaridge I_d)^{-1}\Cdd^{1/2}}_F^2
    \quad
    \text{up to the explicit ridge/projection term.}
\end{equation*}
The key finite-reference object is therefore the interaction between \(\widehat C_{r_\star\delta}\) and \(\widehat C_{\delta\delta}\).  A low-rank or ill-conditioned disagreement covariance is harmless in the centered Gaussian case because \(\widehat C_{r_\star\delta}=0\) exactly.  It becomes a failure mechanism in stiff non-Gaussian targets, where residual leakage \(\widehat C_{r_\star\delta}\neq0\) is multiplied by the empirical inverse of the disagreement covariance.

For LFGI, \cref{prop:risk-weighted-lfgi-perturbation,thm:hessian-reference-condition} give a sufficient upper bound on $\mathcal R(\widehat G_N)-\mathcal R(\Gstar)$
\begin{equation*}
    \mathcal R(\Ghat_N)-\mathcal R(\Gstar)
    \lesssim
    \alphat^4\Lambdapole(y,t)
    \left(
        \frac{v_A^2\log(2d/\delta)}{N_{\eff}(y,t)}
        +
        \text{lower-order envelope terms}
    \right).
\end{equation*}
In PSD or pole-separated regimes, the trace factor in \cref{thm:hessian-reference-condition} shows that raw stiff curvature is cancelled inside the resolvent trace factor.  The LFGI quantity to control is then the relative Hessian fluctuation $\epsilon_H=\gammat\nor{A^{-1/2}(\widehat H_N-H)A^{-1/2}}_{\op}$, not inversion of a noisy residual-coupled covariance matrix.

The same contrast appears in the bimodal misaligned-stiffness calculation, which gives closed-form LFGI-side quantities for the target family used to expose the centered-primal residual.  In the component-dominant regime, the true mixture observed information satisfies
\begin{equation}
\label{eq:gmm-observed-information-identity}
    H_0(x)
    =
    \sum_j \omega_j(x)P_j
    -
    \operatorname{Cov}_{j\sim \omega(x)}\!\bigl(P_j(x-m_j)\bigr),
    \qquad
    \omega_j(x):=\Pr(K=j\mid X=x).
\end{equation}
Indeed, with component scores \(s_j(x)=-P_j(x-m_j)\), the mixture score is
\(s_0(x)=\sum_j\omega_j(x)s_j(x)\), and the responsibilities satisfy
\(\nabla\omega_j(x)=\omega_j(x)\{s_j(x)-s_0(x)\}\).  Differentiating the responsibility-weighted score gives
\[
    \nabla s_0(x)
    =
    \sum_j\omega_j(x)(-P_j)
    +
    \sum_j\omega_j(x)(s_j(x)-s_0(x))s_j(x)^\top,
\]
so \(-\nabla s_0(x)\) is exactly \cref{eq:gmm-observed-information-identity}.  Thus the observed information reduces to the active component precision up to the displayed responsibility-covariance correction.  When clean responsibilities are sharp under the OU posterior or when the correction remains PSD/pole-separated, \(H(y,t)\) is close to \(P_\tau(y,t)\) and LFGI gives the same spatially varying gate \cref{eq:bimodal-local-gate} up to a controlled Hessian perturbation.  In the label-resolved idealization \(H_K=P_K\), the relative Hessian fluctuation entering \cref{ass:hessian-concentration} is simply
\begin{equation*}
    Z_k(y,t)
    =
    \gammat
    A(y,t)^{-1/2}
    \bigl(P_k-P_\tau(y,t)\bigr)
    A(y,t)^{-1/2},
    \qquad
    A=\alphat^2I_d+\gammat P_\tau.
\end{equation*}
Hence one may take
\begin{equation*}
    R_A(y,t)\le \max_k\|Z_k(y,t)\|_{\op},
    \qquad
    v_A^2(y,t)\le \sum_k\tau_k(y,t)\|Z_k(y,t)\|_{\op}^2.
\end{equation*}
No between-component score residual \(a_k\), no stiffness-mismatch residual \(L_ku_k\), and no inverse empirical disagreement covariance appear in this LFGI concentration scale.  This is the desired friendly case: local curvature and local stiffness are aligned well enough that the Hessian average concentrates, while centered primal regression still pays for residual leakage amplified by an empirical covariance inverse.

A useful sufficient-condition guide is therefore
\begin{equation}
\label{eq:relative-advantage-regime}
    \nor{\widehat C_{r_\star\delta} \Cdd^{-1/2}}_F^2
    \nor{\Cdd^{1/2}(\widehat C_{\delta\delta}+\lambdaridge I_d)^{-1}\Cdd^{1/2}}_{\op}^2
    \gg
    \alphat^4\Lambdapole(y,t)
    \frac{v_A^2}{N_{\eff}(y,t)},
\end{equation}
up to logarithmic, ridge, and envelope factors.  We do not use \cref{eq:relative-advantage-regime} as a calibrated predictor of empirical error.  Its role is to identify the two distinct finite-reference error sources: centered primal regression must control residual leakage through an empirical covariance inverse, whereas LFGI must control Hessian-average perturbation.  The right-hand side is a conservative sufficient upper bound and can be loose in the risk-weighted directions that matter for gate capture.  The validation study in \cref{subsec:validation-score-gate-sweeps} therefore checks the estimator-level consequence directly through score RMSE (see Appendix \cref{app:metrics}, Def.~\ref{def:score-rmse}) and the auxiliary gate-capture breakdown in Appendix \cref{app:gate-capture-breakdown}, not through a fitted internal diagnostic ratio.

\section{Auxiliary Gate-Capture Breakdown}
\label{app:gate-capture-breakdown}

The gate-estimation sweep uses the $d=8$ misaligned singular-subspace GMM from \cref{subsec:exp-misaligned-gmm-d8}.  The target, query distribution, and score-bank construction are held fixed while the gate-bank size varies over $N_g\in\{2^6,2^7,\ldots,2^{12}\}$.  The comparison is restricted to Matrix Blend and LFGI, the two full-matrix estimators of the same population gate $\Gstar(y,t)$.  Matrix Blend estimates the centered primal covariance quotient in \cref{eq:centered-primal-gate,eq:centered-covariances}; LFGI estimates the OU-conditional observed-information average and applies the resolvent map $\Psiop$.

\Cref{fig:validation-gate-capture-appendix} reports both gate-capture diagnostics.  The top row reports relative Frobenius gate error, which measures ordinary matrix-scale recovery of $\Gstar$.  The bottom row reports the risk-weighted error from Def.~\ref{def:gate-capture}, normalized by $\tr(\Cdd)$.  The risk-weighted diagnostic is the one tied directly to excess conditional score risk through \cref{eq:excess-risk}.

\begin{figure}[H]
\centering
\includegraphics[width=0.315\textwidth]{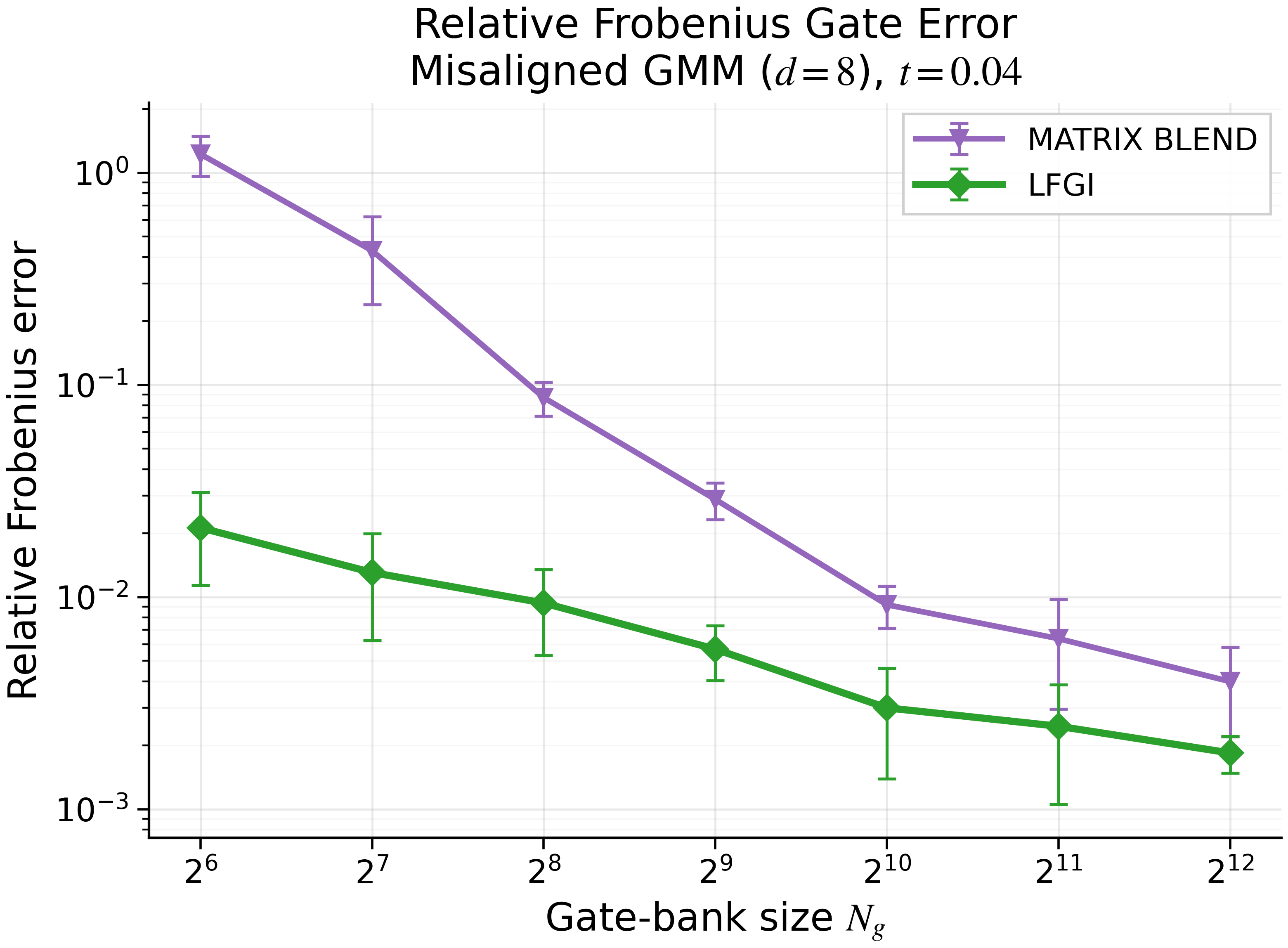}\hfill
\includegraphics[width=0.315\textwidth]{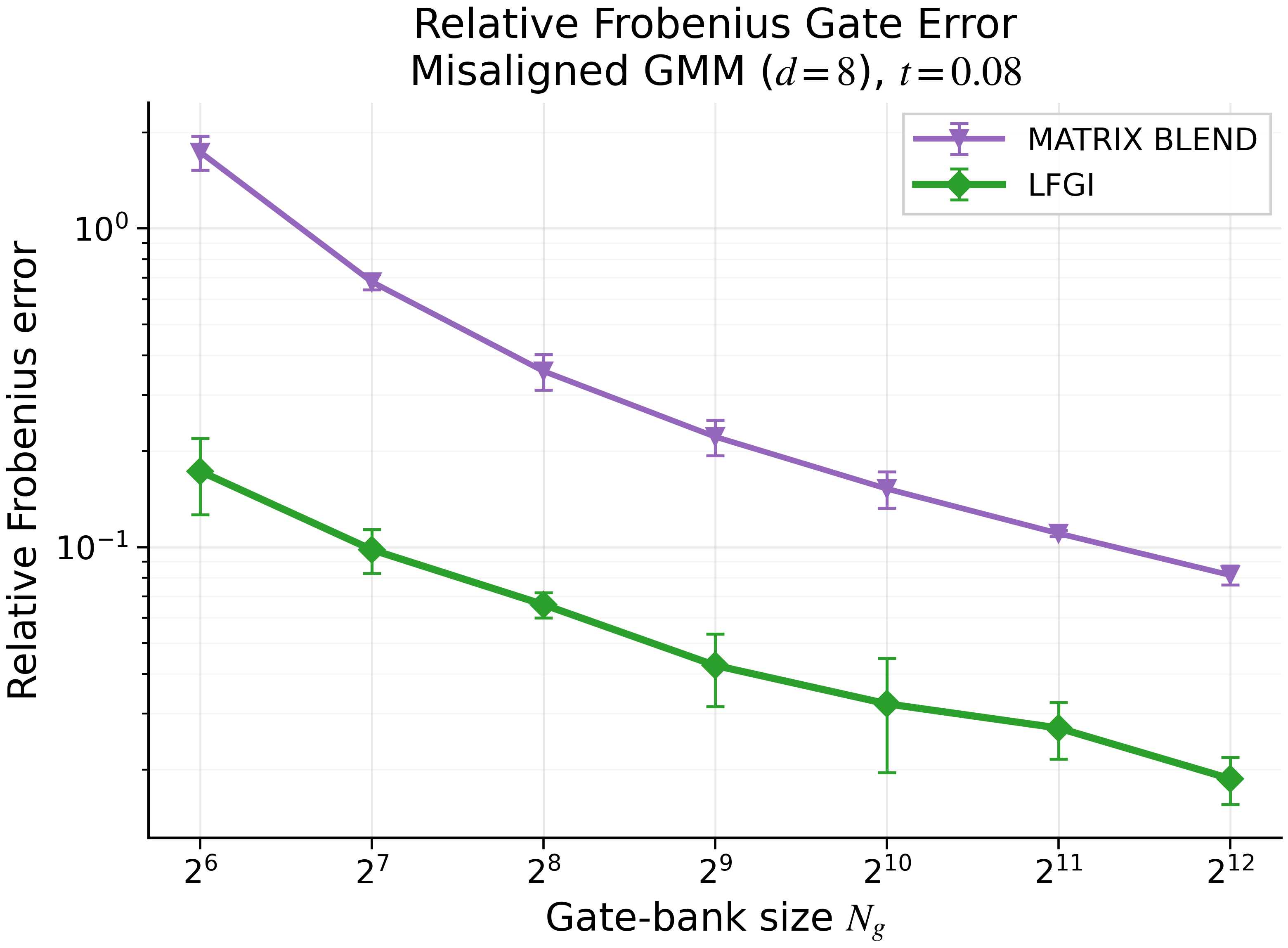}\hfill
\includegraphics[width=0.315\textwidth]{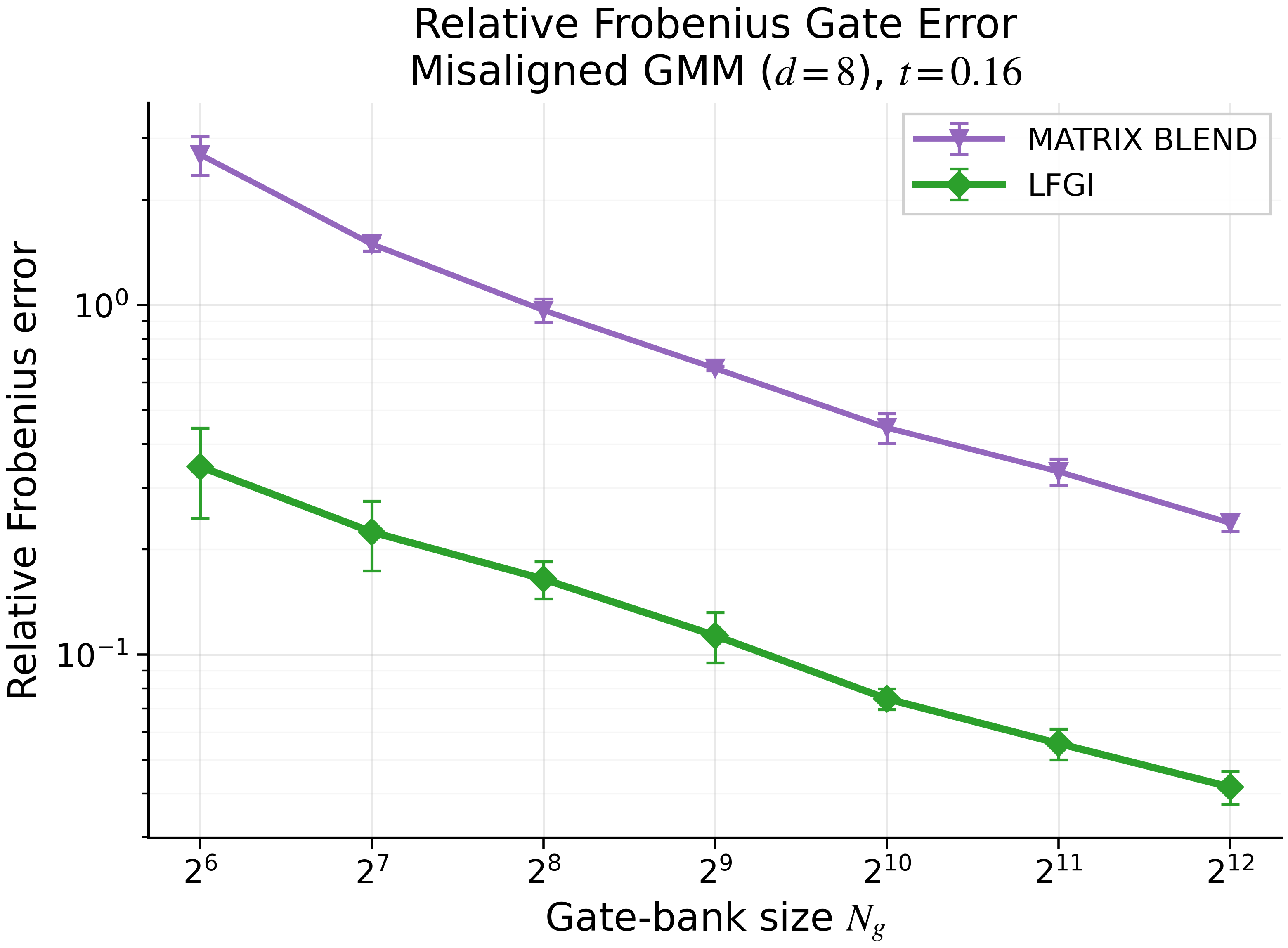}
\par\vspace{0.75em}
\includegraphics[width=0.315\textwidth]{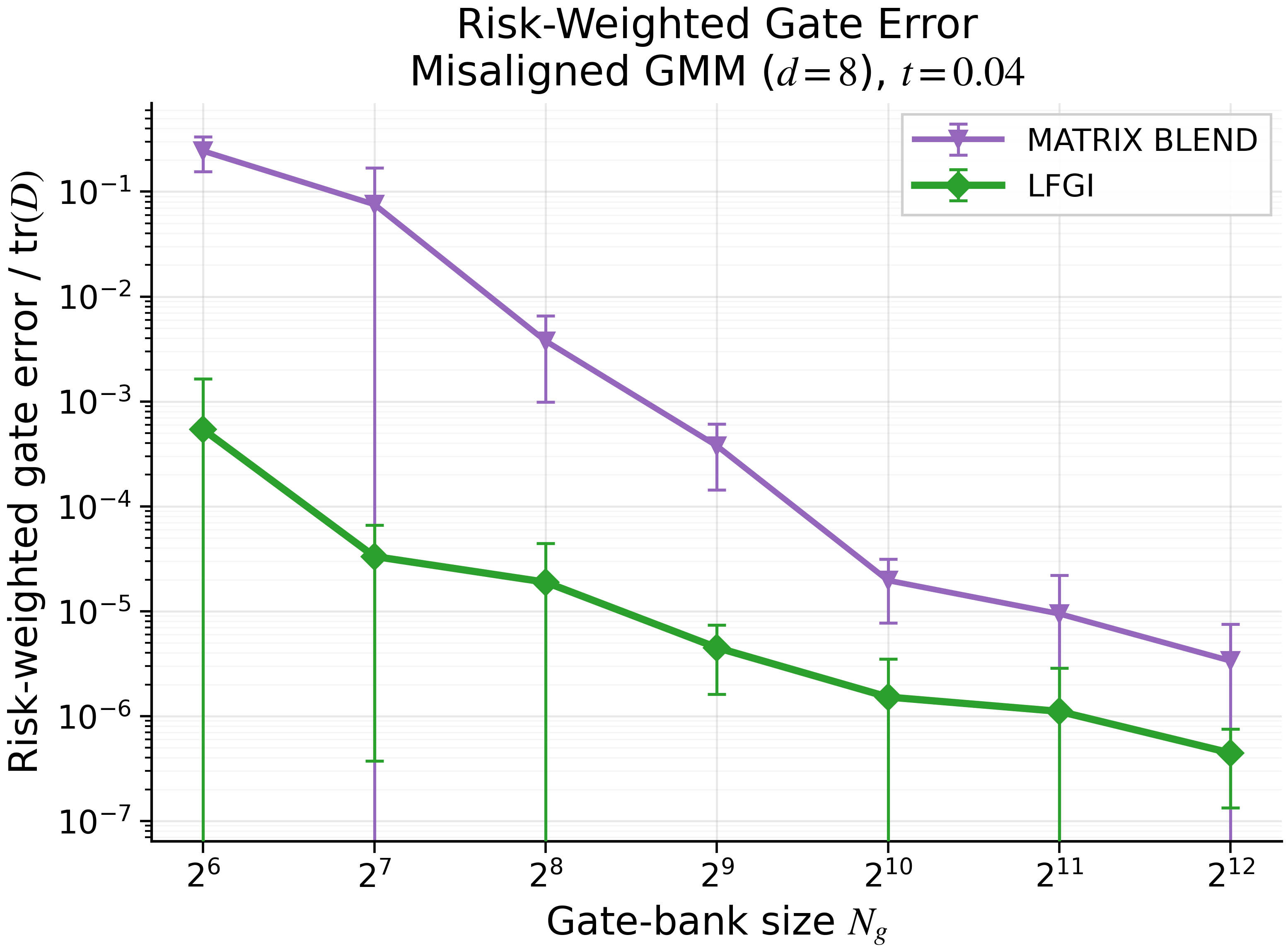}\hfill
\includegraphics[width=0.315\textwidth]{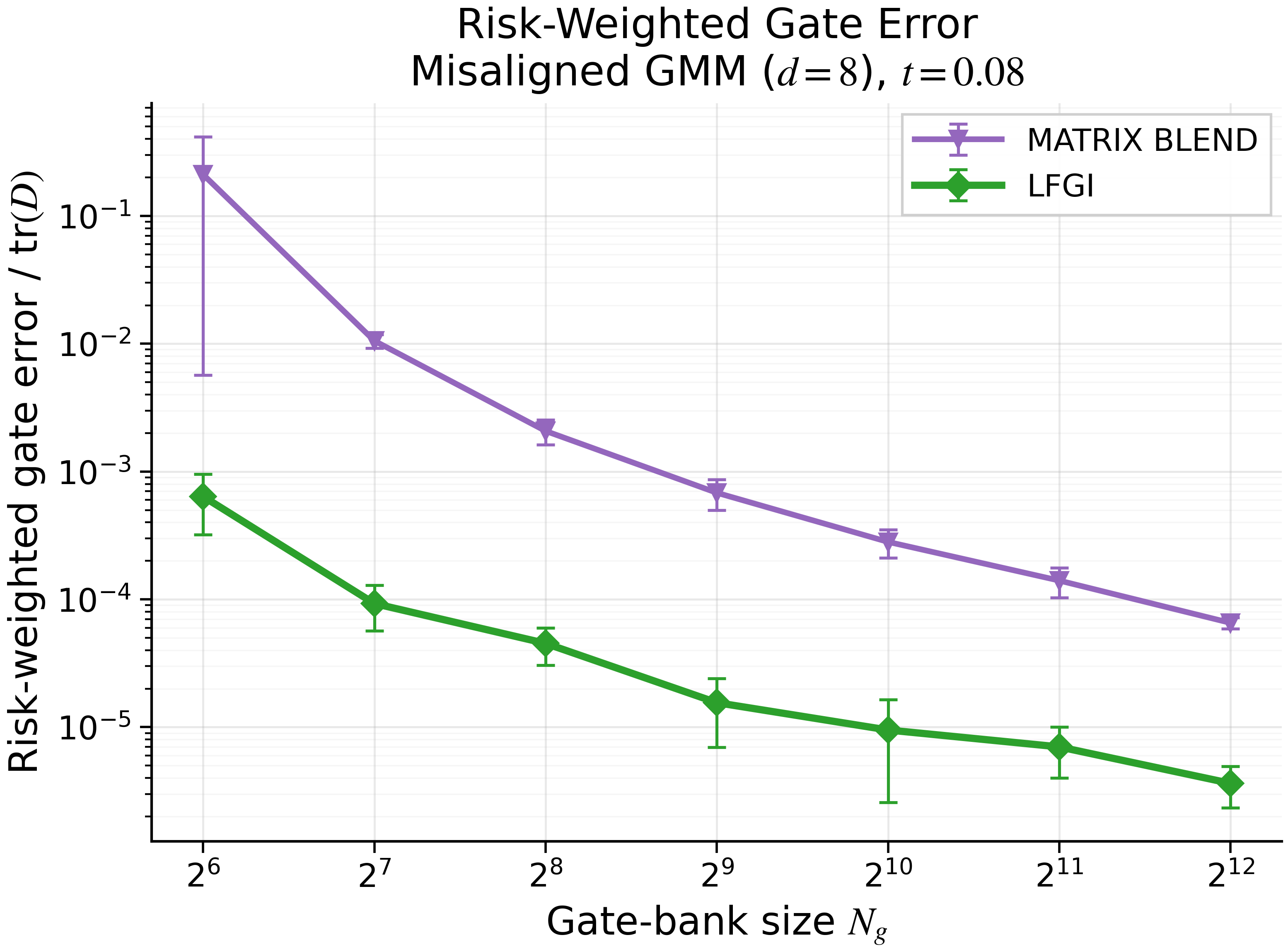}\hfill
\includegraphics[width=0.315\textwidth]{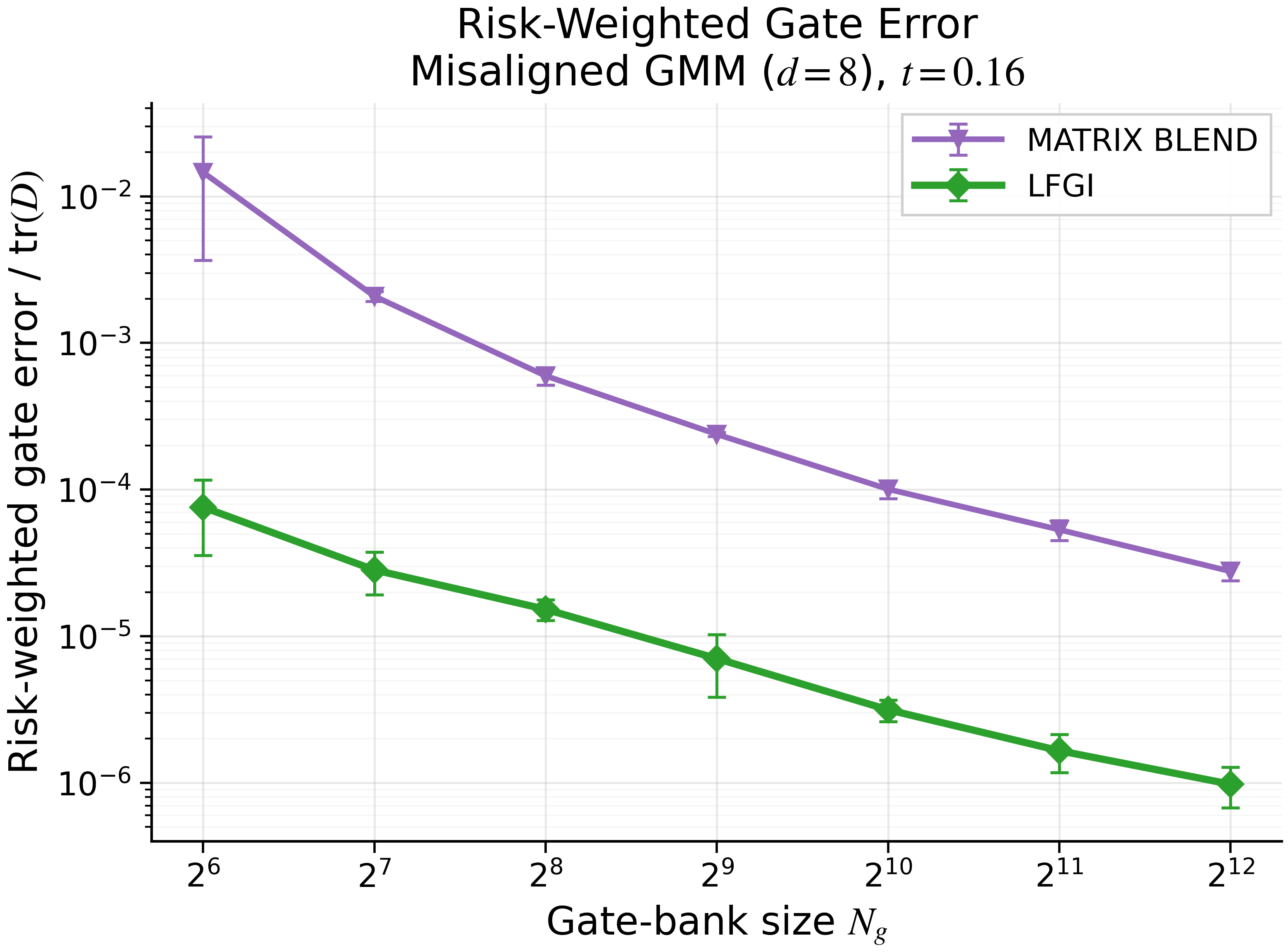}
\caption{Auxiliary gate-capture sweep on the $d=8$ misaligned singular-subspace GMM.  Columns use $t=0.04,0.08,0.16$; rows show relative Frobenius gate error and risk-weighted gate error.  LFGI has smaller error than Matrix Blend across the displayed gate-bank sizes in both geometries.}
\label{fig:validation-gate-capture-appendix}
\end{figure}

\section{Auxiliary Pole Diagnostics}
\label{app:pole-diagnostics}

\subsection{Neal-funnel shifted-pole audit}
\label{app:funnel-pole-diagnostics}

The pole diagnostic checks the finite-reference quantities that appear in \cref{prop:risk-weighted-lfgi-perturbation,thm:hessian-reference-condition}.  For each audited pair $(y,t)$, a large held-out reference bank estimates
\[
    H(y,t)=\E[H_0(X_0)\mid Y_t=y],
    \qquad
    \Cdd(y,t)=\E[\delta\delta^\top\mid Y_t=y],
\]
and the shifted matrix $A(y,t)=\alphat^2I_d+\gammat H(y,t)$ is diagonalized.  The table reports whether $A(y,t)$ is nonpositive, whether a nonpositive shifted eigenvalue occurs in a direction carrying non-negligible $\Cdd$ mass, the lower tail of $\lambda_{\min}(A)$, and the upper tail of the relative Hessian error
\[
    \epsilon_H
    =
    \gammat
    \|A^{-1/2}(\widehat H_N-H)A^{-1/2}\|_{\op}.
\]
It also reports the upper tail of
\[
    \operatorname{CR}(y,t)
    =
    \frac{
    \alphat^4
    \left(\epsilon_H/(1-\epsilon_H)\right)^2
    \Lambdapole(y,t)
    }{
    \Gainstar(y,t)
    },
\]
where \cref{eq:lfgi-risk-capture-condition} requires $\operatorname{CR}(y,t)\le\eta$ when the small-perturbation condition holds.  The strict pass rate in \cref{tab:funnel-pole-audit-summary} uses $\eta=0.5$ and $\epsilon_H\le1/2$.

\begin{table}[H]
\centering
\small
\begin{tabular}{@{}rrrrrrr@{}}
\toprule
$t$ & Pass rate & Nonpos. $A$ rate & Active-pole rate & $q_{0.05}\lambda_{\min}(A)$ & $q_{0.90}\epsilon_H$ & $q_{0.90}\operatorname{CR}$ \\
\midrule
$0.01$ & $0.982$ & $0.000$ & $0.000$ & $0.940$ & $0.181$ & $0.052$ \\
$0.03$ & $0.953$ & $0.000$ & $0.000$ & $0.871$ & $0.317$ & $0.256$ \\
$0.10$ & $0.693$ & $0.000$ & $0.000$ & $0.712$ & $0.594$ & $2.465$ \\
$0.30$ & $0.510$ & $0.000$ & $0.000$ & $0.448$ & $0.767$ & $11.332$ \\
$1.00$ & $0.678$ & $0.000$ & $0.000$ & $0.120$ & $0.916$ & $127.016$ \\
$3.00$ & $0.756$ & $0.000$ & $0.000$ & $4.583$ & $0.528$ & $0.453$ \\
\bottomrule
\end{tabular}
\caption{Auxiliary pole audit for Neal's funnel, using $512$ audited OU query points per displayed time.  Columns report nonpositive shifted-Hessian events, active-pole events, and the perturbation-to-gain ratio from \cref{eq:lfgi-risk-capture-condition}.  No shifted poles are observed, although the sufficient condition is not uniformly certified.}
\label{tab:funnel-pole-audit-summary}
\end{table}

\section{Density, Evidence, and Calibration Metrics}
\label{app:density-metrics}

For evaluation points $x_i$ with true unnormalized energy $E_i=-\log\tildep(x_i)$ and probability-flow estimated energy $\Epf_i=-\log\qpf(x_i)$, the density diagnostics compare the scalar arrays $\{E_i\}$ and $\{\Epf_i\}$ on a held-out density-evaluation bank.  The Darcy-flow comparison table reports the diagnostics defined below: Spearman rank correlation, central affine fit slope, central affine $R^2$, central slope-normalized RMSE, and pointwise NLL, with the affine columns computed on the robust central band.

\begin{definition}[Robust central density diagnostics]
\label[definition]{def:central-density}
Given percentiles $q_{\rm lo}<q_{\rm hi}$, define the central index set
\[
    \mathcal I_{\rm cen}
    =
    \{i: Q_{q_{\rm lo}}(E)\le E_i\le Q_{q_{\rm hi}}(E)\},
\]
where $Q_q(E)$ is the empirical $q$th percentile of the true energy values on the evaluation bank.  Central Pearson and central Spearman are the corresponding correlations restricted to $\mathcal I_{\rm cen}$.  The central affine fit is the least-squares fit
\[
    \Epf_i \approx a E_i+b,
    \qquad i\in\mathcal I_{\rm cen},
\]
with slope $a$, intercept $b$, and central affine $R^2$.  The affine-normalized surrogate energy plotted in the density grids is
\[
    Q_{\rm aff}(x_i)=\frac{\Epf_i-b}{a},
\]
and the central residual is $Q_{\rm aff}(x_i)-E_i$.  The central affine RMSE is the RMSE of the fitted $\Epf_i\approx aE_i+b$ relation on $\mathcal I_{\rm cen}$, while the central slope-normalized RMSE is the RMSE of $Q_{\rm aff}(x_i)-E_i$ on $\mathcal I_{\rm cen}$.  The Darcy-flow table uses the $3$--$97\%$ central band, matching the displayed density-energy grid.  Spearman reports rank ordering over the full evaluation bank unless explicitly marked as central.
\end{definition}

\begin{definition}[Pointwise probability-flow NLL]
\label[definition]{def:pf-nll}
For a normalized probability-flow surrogate $\qpf$ evaluated on held-out posterior points $x_i$, the pointwise negative log likelihood is
\[
    \operatorname{NLL}_{\qpf}
    =
    -\frac1n\sum_{i=1}^n \log \qpf(x_i)
    =
    \frac1n\sum_{i=1}^n \Epf_i .
\]
This column checks the normalized density value assigned by the probability-flow surrogate on the held-out density-evaluation bank; it is distinct from the KDE NLL in \cref{def:kde-nll}.
\end{definition}

\begin{definition}[Evidence and correction-weight diagnostics]
\label[definition]{def:evidence-diagnostics}
The evidence diagnostics are pointwise $\widehat{\logZ}_{\rm pt}$ on held-out posterior points, reciprocal-IS $\widehat{Z}_{\rm rIS}^{-1}$ on pilot points, optional forward-IS/bridge estimates from samples generated by $\qpf$, and the associated directional ESS values.  If $Y\sim\qpf$, then
\[
    Z=\E_{\qpf}\left[\frac{\tildep(Y)}{\qpf(Y)}\right],
\]
whereas if $X\sim p_0$, then
\[
    \frac1Z=\E_{p_0}\left[\frac{\qpf(X)}{\tildep(X)}\right].
\]
The realized correction ESS is computed from the normalized weights proportional to $\tildep(Y_j)/\qpf(Y_j)$ for generated samples $Y_j\sim\qpf$.  In known-$Z$ experiments, the reported evidence error is the absolute difference $|\widehat{\logZ}-\log Z|$ before any affine calibration.
\end{definition}

Heavy-tailed inverse-problem energy banks can make global Pearson and global affine $R^2$ misleading.  A single extreme-energy particle can dominate the covariance and make a local MAP--Laplace proxy appear nearly perfectly correlated with the target, even when its posterior-bulk calibration is poor.  The central diagnostics in \cref{def:central-density} separate tail leverage from typical posterior calibration.  The displayed density-calibration figures use robust axes, affine-normalized residuals, and a common LFGI--GN axis reference across rows.


\section{Proofs for OU identities}
\label{app:ou-proofs}

\subsection{Proof of \cref{lem:posterior-score-disagreement}}
\label{app:proof-lem-posterior-score-disagreement}

From \cref{eq:rho-ou},
\[
    \log\rhoOU(x)
    =
    \log p_0(x)
    -
    \frac{\|y-\alphat x\|^2}{2\gammat}
    -
    \log p_t(y)
    -
    \frac d2\log(2\pi\gammat).
\]
The last two terms are independent of $x$.  Differentiating with respect to $x$ gives
\[
    \nabla_x\log\rhoOU(x)
    =
    s_0(x)+\frac{\alphat(y-\alphat x)}{\gammat}
    =
    s_0(x)-\alphat b(x;y,t)
    =
    \alphat(c(x;t)-b(x;y,t))
    =
    \alphat \delta(x;y,t).
\]
This proves the claim.

\section{Proofs for the matrix-gate risk}
\label{app:risk-proofs}

\subsection{Proof of \cref{prop:normal-equation}}
\label{app:proof-prop-normal-equation}

At fixed $(y,t)$, suppress the arguments and write all expectations with respect to $\rhoOU=p_0(\cdot\mid Y_t=y)$.  Expanding the quadratic risk gives
\[
\begin{aligned}
    \mathcal R(G)
    &=
    \E_{\rhoOU}\|\eb+G\delta\|^2  \\
    &=
    \E_{\rhoOU}\|\eb\|^2
    +2\E_{\rhoOU}[\eb^\top G\delta]
    +\E_{\rhoOU}[\delta^\top G^\top G\delta]  \\
    &=
    \E_{\rhoOU}\|\eb\|^2
    +2\tr(G\Cebd^\top)
    +\tr(G\Cdd G^\top),
\end{aligned}
\]
where $\Cdd=\E_{\rhoOU}[\delta\delta^\top]$ and $\Cebd=\E_{\rhoOU}[\eb\delta^\top]$.  The Euclidean gradient with respect to $G$ is
\[
    \nabla_G\mathcal R(G)=2(G\Cdd+\Cebd).
\]
Thus every unconstrained minimizer satisfies $G\Cdd+\Cebd=0$.  If $\Cdd$ is nonsingular, the solution is unique and equals $G=-\Cebd \Cdd^{-1}$.

\subsection{Proof of \cref{prop:residual-risk}}
\label{app:proof-prop-residual-risk}

Let $\Gstar$ solve $\Gstar \Cdd+\Cebd=0$ and set $\Delta=G-\Gstar$.  Then
\[
    \eb+G\delta=(\eb+\Gstar\delta)+\Delta\delta.
\]
Expanding the risk difference gives
\[
\begin{aligned}
    \mathcal R(G)-\mathcal R(\Gstar)
    &=
    2\E_{\rhoOU}[(\eb+\Gstar\delta)^\top\Delta\delta]
    +
    \E_{\rhoOU}[\delta^\top\Delta^\top\Delta\delta].
\end{aligned}
\]
The cross term vanishes because
\[
    \E_{\rhoOU}[(\eb+\Gstar\delta)\delta^\top]
    =
    \Cebd+\Gstar \Cdd=0.
\]
Therefore
\[
    \mathcal R(G)-\mathcal R(\Gstar)
    =
    \tr(\Delta \Cdd\Delta^\top),
\]
which is \cref{eq:excess-risk}.  The residual identity follows from
\[
    \OmegaG(G)=G\Cdd+\Cebd=(G-\Gstar)\Cdd=\Delta \Cdd.
\]
If $\Cdd$ is nonsingular, then
\[
    \tr(\OmegaG(G)\Cdd^{-1}\OmegaG(G)^\top)
    =
    \tr(\Delta \Cdd\Cdd^{-1}\Cdd\Delta^\top)
    =
    \tr(\Delta \Cdd\Delta^\top).
\]

\subsection{Proof of \cref{prop:gaussian-gate}}
\label{app:proof-prop-gaussian-gate}

Let $\mu_{y,t}=\E[X_0\mid Y_t=y]$ and write $x=\mu_{y,t}+u$.  Gaussian conditioning gives
\[
    X_0\mid Y_t=y\sim
    \N\!\bigl(\mu_{y,t},\,\gammat(\alphat^2I_d+\gammat P)^{-1}\bigr).
\]
Since $b(x;y,t)=(\alphat x-y)/\gammat$ is affine in $x$, centering under the conditional law gives
\[
    b(x;y,t)-s_t(y)=\frac{\alphat}{\gammat}u.
\]
The posterior-score relation also gives, by direct substitution into $c-b$,
\[
    \delta(x;y,t)
    =
    -\frac{1}{\alphat\gammat}(\alphat^2I_d+\gammat P)u.
\]
For
\[
    G_t=\alphat^2(\alphat^2I_d+\gammat P)^{-1},
\]
these two displays imply
\[
    b(x;y,t)-s_t(y)=-G_t\delta(x;y,t)
    \qquad\text{for every }x.
\]
Thus the local risk residual $\eb+G_t\delta$ vanishes pointwise, so $G_t$ is the risk-minimizing gate.  Since $P\succ0$, the conditional covariance of $\delta$ is nonsingular, and the minimizer is unique by \cref{prop:normal-equation}.  If $Pu_j=\lambda_j u_j$, spectral calculus gives
\[
    G_tu_j
    =
    \frac{\alphat^2}{\alphat^2+\gammat\lambda_j}u_j
    =
    \frac{1}{1+\lambdat\lambda_j}u_j,
\]
which proves the stated formula.

\subsection{Proof of \cref{cor:scalar-failure}}
\label{app:proof-cor-scalar-failure}

If a scalar gate $gI_d$ equaled the Gaussian optimal gate, then for every eigenvector $u_j$ carrying nonzero disagreement variance one would have
\[
    gu_j=\Gstar u_j=\psi_t(\lambda_j)u_j.
\]
Thus $g=\psi_t(\lambda_j)$ on every such eigendirection.  If two active eigenvalues $\lambda_i\neq\lambda_j$ have $\psi_t(\lambda_i)\neq\psi_t(\lambda_j)$, no single scalar $g$ can satisfy both equations.  Since $\psi_t(\lambda)=\alphat^2/(\alphat^2+\gammat\lambda)$ is strictly decreasing for $t>0$, unequal eigenvalues give unequal attenuation factors.

\subsection{Proof of \cref{prop:gaussian-centered-cancellation}}
\label{app:proof-prop-gaussian-centered-cancellation}

For the Gaussian target, the OU posterior $X_0\mid Y_t=y$ is Gaussian with covariance
\[
    \gammat(\alphat^2 I_d+\gammat P)^{-1}
\]
and mean $\mu_{y,t}$.  Write $x=\mu_{y,t}+u$.  Direct substitution into the definitions of $b$ and $c$ gives
\[
    b(x;y,t)-s_t(y)=\frac{\alphat}{\gammat}u,
    \qquad
    \delta(x;y,t)= -\frac{1}{\alphat\gammat}(\alphat^2 I_d+\gammat P)u.
\]
Multiplying the second display by
\[
    \Gstar(t)=\alphat^2(\alphat^2I_d+\gammat P)^{-1}
\]
gives
\[
    b(x;y,t)-s_t(y)=-\Gstar(t)\delta(x;y,t)
    \qquad\text{for every }x.
\]
Thus $r_\star\equiv0$.  Subtracting weighted empirical means gives
\[
    b_i-\bar b=-\Gstar(t)(\delta_i-\bar\delta),
\]
and therefore $\widehat C_{\eb\delta}=-\Gstar\widehat C_{\delta\delta}$ exactly.  It follows that $\widehat G_{\rm cen}=\Gstar$ on the empirical disagreement span, and as a full matrix whenever $\widehat C_{\delta\delta}$ has full rank.

\subsection{Proof of \cref{prop:bimodal-local-gate}}
\label{app:proof-prop-bimodal-local-gate}

The posterior mixture formula follows by multiplying the Gaussian OU likelihood by each Gaussian component of the prior. Within component $k$, the conditional covariance and mean are
\[
    \Sigma^{X\mid Y}_k(t)
    =
    \left(P_k+\frac{\alphat^2}{\gammat}I_d\right)^{-1}
    =
    \gammat(\alphat^2I_d+\gammat P_k)^{-1},
    \qquad
    \mu_k(y,t)
    =
    \Sigma^{X\mid Y}_k(t)\left(P_km_k+\frac{\alphat}{\gammat}y\right).
\]
The component marginal score is $s^Y_{t,k}$, and differentiating the two-component Gaussian mixture marginal gives
\[
    s_t(y)=\sum_{k\in\{+,-\}}\tau_k(y,t)s^Y_{t,k}(y).
\]
Condition on $K=k$ and $Y_t=y$, and write $X_0=\mu_k(y,t)+u_k$ with
$u_k\sim\mathcal N(0,\Sigma^{X\mid Y}_k(t))$.  Direct substitution into the component-score signal gives
\[
    b(X_0;y,t)-s_t(y)
    =
    a_k(y,t)+\frac{\alphat}{\gammat}u_k,
    \qquad
    \delta(X_0;y,t)=B_k(t)u_k,
\]
where $a_k(y,t)=s^Y_{t,k}(y)-s_t(y)$ and
$B_k(t)=-(\alphat\gammat)^{-1}(\alphat^2I_d+\gammat P_k)$.
Since $\E[u_k\mid K=k,Y_t=y]=0$, the between-component term $a_k$ contributes no cross moment with $\delta$.  Therefore
\[
    \Cebd
    =
    \sum_k \tau_k\frac{\alphat}{\gammat}\Sigma^{X\mid Y}_k B_k^\top
    =
    -\frac{1}{\gammat}I_d.
\]
Similarly,
\[
    \Cdd
    =
    \sum_k \tau_k B_k\Sigma^{X\mid Y}_k B_k^\top
    =
    \sum_k \tau_k\frac{1}{\alphat^2\gammat}(\alphat^2I_d+\gammat P_k)
    =
    \frac{1}{\gammat}I_d+\frac{1}{\alphat^2}P_\tau.
\]
Substituting these formulas into the normal-equation gate gives
\[
    G_{\rm lab}(y,t)
    =
    \alphat^2(\alphat^2I_d+\gammat P_\tau(y,t))^{-1},
\]
which is \cref{eq:bimodal-local-gate}.

\subsection{Verification of the marginal-score overlap correction}
\label{app:proof-bimodal-marginal-overlap}

For the same two-component mixture, let
\[
    \omega_k(x):=\Pr(K=k\mid X_0=x)
    =
    \frac{\pi_k\N(x;m_k,P_k^{-1})}
         {\sum_{\ell\in\{+,-\}}\pi_\ell\N(x;m_\ell,P_\ell^{-1})}.
\]
The marginal target score satisfies
\[
    s_0(x)=\nabla_x\log p_0(x)=\sum_{k\in\{+,-\}}\omega_k(x)s_k(x).
\]
Differentiating this expression gives the observed information identity
\[
    -\nabla_x^2\log p_0(x)
    =
    \sum_{k\in\{+,-\}}\omega_k(x)P_k
    -
    \omega_+(x)\omega_-(x)
    \{s_+(x)-s_-(x)\}\{s_+(x)-s_-(x)\}^\top .
\]
Because \(K\to X_0\to Y_t\) is a Markov chain,
\(\E[\omega_k(X_0)\mid Y_t=y]=\Pr(K=k\mid Y_t=y)=\tau_k(y,t)\).  Therefore
\[
    H_{\rm marg}(y,t)
    :=
    \E[-\nabla_x^2\log p_0(X_0)\mid Y_t=y]
    =
    P_\tau(y,t)-C_{\rm ov}(y,t),
\]
with \(C_{\rm ov}\) defined in \cref{eq:gmm-marginal-overlap-correction}.  Hence the implemented marginal-score gate is
\[
    G_{\rm marg}(y,t)
    =
    \alphat^2
    \left(\alphat^2I_d+\gammat\{P_\tau(y,t)-C_{\rm ov}(y,t)\}\right)^{-1}.
\]
Let \(A_{\rm lab}:=\alphat^2I_d+\gammat P_\tau(y,t)\).  If
\[
    \epsilon_{\rm ov}(y,t)
    :=
    \gammat\|A_{\rm lab}^{-1/2}C_{\rm ov}(y,t)A_{\rm lab}^{-1/2}\|_{\rm op}<1,
\]
then the Neumann-series bound gives
\[
    \|A_{\rm lab}^{1/2}(G_{\rm marg}-G_{\rm lab})A_{\rm lab}^{1/2}\|_{\rm op}
    \le
    \alphat^2\frac{\epsilon_{\rm ov}}{1-\epsilon_{\rm ov}}.
\]
The parameter \(\epsilon_{\rm ov}\) is small in separated component-local regions.  Indeed, if the log-odds
\(\ell(x):=\log\{\omega_+(x)/\omega_-(x)\}\) satisfies \(|\ell(x)|\ge L\) on a set \(E_y\), then
\(\omega_+(x)\omega_-(x)\le e^{-L}\) on \(E_y\), and
\[
    \|C_{\rm ov}(y,t)\|_{\rm op}
    \le
    e^{-L}\,\E[\|s_+(X_0)-s_-(X_0)\|^2\mid Y_t=y]
    +
    \frac14\E[\|s_+(X_0)-s_-(X_0)\|^2\mathbf 1_{E_y^c}\mid Y_t=y].
\]
Thus increasing the mixture separation makes the marginal-score correction exponentially small on the component-local regions, up to the displayed Gaussian tail term.

\subsection{Proof of \cref{prop:bimodal-residual-leakage-scale}}
\label{app:proof-bimodal-residual-leakage}
\begin{proof}
Condition on $K=k$ and $Y_t=y$.  Then $u_k\sim\N(0,\Sigma^{X\mid Y}_k)$, $r_\star=a_k+L_ku_k$, and $\delta=B_ku_k$.  Hence
\[
    \|r_\star\delta^\top\Cdd^{-1/2}\|_F^2
    =
    \|a_k+L_ku_k\|^2\,u_k^\top Q_ku_k .
\]
The cross term $2a_k^\top L_ku_k\,u_k^\top Q_ku_k$ has zero $\E_{k,y}$-expectation because it is an odd centered-Gaussian moment.  Here $\E_{k,y}$ denotes expectation over $u_k\sim\N(0,\Sigma^{X\mid Y}_k(t))$.  For $R_k=L_k^\top L_k$, Isserlis' identity \citep{isserlis1918formula} gives\[
    \E_{k,y}\{(u_k^\top R_ku_k)(u_k^\top Q_ku_k)\}
    =
    \tr(R_k\Sigma^{X\mid Y}_k)\tr(Q_k\Sigma^{X\mid Y}_k)
    +2\tr(R_k\Sigma^{X\mid Y}_kQ_k\Sigma^{X\mid Y}_k).
\]
The deterministic part contributes $\|a_k\|^2\E_{k,y}[u_k^\top Q_ku_k]=\|a_k\|^2\tr(Q_k\Sigma^{X\mid Y}_k)$.  Averaging the resulting conditional expression over $K$ with weights $\tau_k(y,t)$ proves \cref{eq:bimodal-centered-primal-noise-scale}.
\end{proof}

\subsection{Proof of \cref{thm:direct-plugin-ill-conditioning}}
\label{app:proof-thm-direct-plugin-ill-conditioning}

  The empirical and population normal equations are
\[
    \widehat G_{\rm cen}\widehat C_{\delta\delta}+\widehat C_{\eb\delta}=0,
    \qquad
    \Gstar \Cdd+\Cebd=0.
\]
With $\widehat C_{\delta\delta}=\Cdd+\Ddd$ and $\widehat C_{\eb\delta}=\Cebd+\Debd$,
\[
    (\widehat G_{\rm cen}-\Gstar)\widehat C_{\delta\delta}
    =
    -(\Debd+\Gstar\Ddd).
\]
Multiplication by $\widehat C_{\delta\delta}^{-1}\Cdd^{1/2}$ and \cref{eq:excess-risk} give
\[
    \mathcal R(\widehat G_{\rm cen})-\mathcal R(\Gstar)
    =
    \| (\Debd+\Gstar\Ddd)\widehat C_{\delta\delta}^{-1}\Cdd^{1/2}\|_F^2.
\]
If $r_{\delta\delta}=\|\Cdd^{-1/2}\Ddd \Cdd^{-1/2}\|_{\op}<1$, write
\[
    \widehat C_{\delta\delta}=\Cdd^{1/2}(I+B)\Cdd^{1/2},
    \qquad
    B=\Cdd^{-1/2}\Ddd \Cdd^{-1/2}.
\]
Then $\|(I+B)^{-1}\|_{\op}\le (1-r_{\delta\delta})^{-1}$, yielding \cref{eq:direct-moment-risk-bound}.

\section{Proof of the Laplace--Fisher Gate Identity}
\label{app:lfgi-proofs}

\subsection{Proof of \cref{lem:M-fisher}}
\label{app:proof-lem-m-fisher}

By \cref{lem:posterior-score-disagreement}, $q=\alphat\delta$.  Therefore
\[
    \Cdd
    =
    \E[\delta\delta^\top\mid Y_t=y]
    =
    \frac1{\alphat^2}
    \E_{\rhoOU}[q(X)q(X)^\top]
    =
    \frac1{\alphat^2}\mathcal I(\rhoOU).
\]
For the Bartlett identity, write componentwise
\[
    \mathcal I_{ij}(\rhoOU)
    =
    \int (\partial_i\log\rhoOU)(\partial_j\log\rhoOU)\rhoOU\,\dd x
    =
    \int (\partial_i\rhoOU)(\partial_j\log\rhoOU)\,\dd x.
\]
Integration by parts gives
\[
    \mathcal I_{ij}(\rhoOU)
    =
    -\int \rhoOU\,\partial_{ij}^2\log\rhoOU\,\dd x,
\]
with vanishing boundary term by \cref{ass:regularity}.  This proves \cref{eq:bartlett}.

\subsection{Proof of \cref{lem:Cebd-universal}}
\label{app:proof-lem-cebd-universal}

All expectations below are under $\rhoOU=p_0(\cdot\mid Y_t=y)$.  Since $\E[\delta\mid Y_t=y]=0$,
\[
    \Cebd
    =
    \E[(b-s_t)\delta^\top\mid Y_t=y]
    =
    \E[b\delta^\top\mid Y_t=y].
\]
Using $\delta=\alphat^{-1}\nabla_x\log\rhoOU$, the $(i,j)$ entry is
\[
\begin{aligned}
    (\Cebd)_{ij}
    &=
    \frac1{\alphat}
    \int b_i(x;y,t)\,\partial_{x_j}\log\rhoOU(x)\,\rhoOU(x)\,\dd x \\
    &=
    \frac1{\alphat}
    \int b_i(x;y,t)\,\partial_{x_j}\rhoOU(x)\,\dd x.
\end{aligned}
\]
Integrating by parts and using the boundary decay from \cref{ass:regularity},
\[
    (\Cebd)_{ij}
    =
    -\frac1{\alphat}
    \int \rhoOU(x)\,\partial_{x_j} b_i(x;y,t)\,\dd x.
\]
Because $b_i(x;y,t)=(\alphat x_i-y_i)/\gammat$,
\[
    \partial_{x_j} b_i(x;y,t)
    =
    \frac{\alphat}{\gammat}\mathbf 1_{\{i=j\}}.
\]
Thus
\[
    (\Cebd)_{ij}
    =
    -\frac1{\alphat}
    \frac{\alphat}{\gammat}\mathbf 1_{\{i=j\}}
    \int \rhoOU(x)\,\dd x
    =
    -\frac1{\gammat}\mathbf 1_{\{i=j\}}.
\]
Hence $\Cebd=-\gammat^{-1}I_d$.

\subsection{Proof of \cref{lem:fisher-H}}
\label{app:proof-lem-fisher-h}

From the posterior expression,
\[
    \log\rhoOU(x)
    =
    \log p_0(x)
    -
    \frac{\|y-\alphat x\|^2}{2\gammat}
    +
    \text{terms independent of }x.
\]
Differentiating twice with respect to $x$ gives
\[
    -\nabla_x^2\log\rhoOU(x)
    =
    -\nabla_x^2\log p_0(x)
    +
    \frac{\alphat^2}{\gammat}I_d
    =
    H_0(x)+\frac{\alphat^2}{\gammat}I_d.
\]
Taking expectation under $\rhoOU$ and applying \cref{lem:M-fisher}'s Bartlett identity gives
\[
    \mathcal I(\rhoOU)
    =
    \E[H_0(X_0)\mid Y_t=y]
    +
    \frac{\alphat^2}{\gammat}I_d
    =
    H(y,t)+\frac{\alphat^2}{\gammat}I_d.
\]
The expression for $\Cdd$ follows from \cref{lem:M-fisher}.

\subsection{Proof of \cref{thm:lfgi}}
\label{app:proof-thm-lfgi}

By \cref{prop:normal-equation}, the population gate satisfies
\[
    \Gstar=-\Cebd \Cdd^{-1}
\]
whenever the inverse exists.  By \cref{lem:Cebd-universal}, $\Cebd=-\gammat^{-1}I_d$.  By \cref{lem:fisher-H},
\[
    \Cdd
    =
    \frac{1}{\gammat\alphat^2}
    (\alphat^2I_d+\gammat H).
\]
Therefore
\[
    -\Cebd \Cdd^{-1}
    =
    \frac1{\gammat}I_d
    \left[\frac{1}{\gammat\alphat^2}(\alphat^2I_d+\gammat H)\right]^{-1}
    =
    \alphat^2(\alphat^2I_d+\gammat H)^{-1}.
\]
This is exactly \cref{eq:lfgi}, and \cref{eq:Psi} is the same identity written as the resolvent map $\Psiop$.

\section{Finite-reference and concentration proofs}
\label{app:finite-reference-proofs}

\subsection{Proof of \cref{thm:snis-lfgi-consistency}}
\label{app:proof-thm-snis-lfgi-consistency}

Let $f(x)=H_0(x)$ and define $W(x)=\KOU(y\mid x)$.  The self-normalized estimator is
\[
    \Hhat_N
    =
    \frac{N^{-1}\sum_{i=1}^N W(X_i)f(X_i)}{N^{-1}\sum_{i=1}^N W(X_i)}.
\]
By the strong law of large numbers and the moment assumption,
\[
    N^{-1}\sum_{i=1}^N W(X_i)f(X_i)
    \to
    \E_{p_0}[W(X)f(X)]
    \quad\text{a.s.},
\]
and
\[
    N^{-1}\sum_{i=1}^N W(X_i)
    \to
    \E_{p_0}[W(X)]=p_t(y)>0
    \quad\text{a.s.}
\]
Thus
\[
    \Hhat_N
    \to
    \frac{\E_{p_0}[\KOU(y\mid X)H_0(X)]}{p_t(y)}
    =
    \E[H_0(X_0)\mid Y_t=y]
    =
    H(y,t)
    \quad\text{a.s.}
\]
Since matrix inversion is continuous on the open set of nonsingular matrices and $A=\alphat^2I_d+\gammat H$ is nonsingular, $\AhatN=\alphat^2I_d+\gammat\Hhat_N$ is nonsingular for all sufficiently large $N$ almost surely and
\[
    \Ghat_N=\alphat^2\AhatN^{-1}
    \to
    \alphat^2A^{-1}=\Gstar.
\]

\subsection{Proof of \cref{prop:risk-weighted-lfgi-perturbation}}
\label{app:proof-prop-risk-weighted-lfgi-perturbation}

Set
\[
    B=\gammat A^{-1/2}\Delta_HA^{-1/2},
    \qquad \nor{B}_{\op}=\epsilon_H<1.
\]
Then
\[
    \AhatN
    =A+\gammat\Delta_H
    =A^{1/2}(I+B)A^{1/2}.
\]
Since $B$ is symmetric and $\lambda_{\min}(I+B)\ge 1-\epsilon_H>0$, we have $\AhatN\succ0$ and
\[
    \AhatN^{-1}=A^{-1/2}(I+B)^{-1}A^{-1/2}.
\]
The resolvent identity gives
\[
    \Ghat_N-\Gstar
    =\alphat^2(\AhatN^{-1}-A^{-1})
    =-\alphat^2\gammat\AhatN^{-1}\Delta_HA^{-1}.
\]
The sign is irrelevant for the risk norm.  Using the previous display,
\[
    \gammat\AhatN^{-1}\Delta_HA^{-1}\Cdd^{1/2}
    =
    A^{-1/2}(I+B)^{-1}BA^{-1/2}\Cdd^{1/2}.
\]
Moreover
\[
    \nor{(I+B)^{-1}B}_{\op}
    \le
    \nor{(I+B)^{-1}}_{\op}\nor{B}_{\op}
    \le
    \frac{\epsilon_H}{1-\epsilon_H}.
\]
By \cref{eq:excess-risk},
\[
\begin{aligned}
    \mathcal R(\Ghat_N;y,t)-\mathcal R(\Gstar;y,t)
    & =
    \nor{(\Ghat_N-\Gstar)\Cdd^{1/2}}_F^2 \\
    & \le
    \alphat^4
    \nor{A^{-1/2}}_{\op}^2
    \nor{(I+B)^{-1}B}_{\op}^2
    \nor{A^{-1/2}\Cdd^{1/2}}_F^2 \\
    & \le
    \alphat^4
    \left(\frac{\epsilon_H}{1-\epsilon_H}\right)^2
    \nor{A^{-1}}_{\op}\,\tr(A^{-1}\Cdd).
\end{aligned}
\]
The final inequality uses
$\nor{A^{-1/2}\Cdd^{1/2}}_F^2=\tr(A^{-1}\Cdd)$.
If $\epsilon_H\le1/2$, then $(1-\epsilon_H)^{-2}\le4$, which gives the stated simplified bound.

\subsection{Iid conditional matrix Bernstein calculation underlying \cref{ass:hessian-concentration}}
\label{app:proof-hessian-concentration-bernstein}

This subsection records the rigorous concentration statement that justifies the iid conditional-sample version of \cref{ass:hessian-concentration}.  Let $X_1,\ldots,X_N$ be iid from $p_0(\cdot\mid Y_t=y)$ and define
\[
    \Xi_{H,i}
    =
    \gammat A^{-1/2}(H_0(X_i)-H)A^{-1/2}.
\]
Assume $\Xi_{H,i}$ are self-adjoint, mean zero, and satisfy
\[
    \|\Xi_{H,i}\|_{\op}\le R_A
    \quad\text{a.s.},
    \qquad
    \left\|\E_{\rhoOU}[\Xi_{H,i}^2]\right\|_{\op}\le v_A^2.
\]

The matrix Bernstein inequality gives, with probability at least $1-\delta$,
\[
    \left\|\frac1N\sum_{i=1}^N \Xi_{H,i}\right\|_{\op}
    \le
    C\left(
        \sqrt{\frac{v_A^2\log(2d/\delta)}{N}}
        +
        \frac{R_A\log(2d/\delta)}{N}
    \right)
\]
for a universal constant $C$.  Since the left-hand side is exactly
\[
    \gammat\left\|A^{-1/2}(\Hhat_N-H)A^{-1/2}\right\|_{\op},
\]
this proves \cref{eq:relative-hessian-concentration} in the iid conditional-sample setting.  For the self-normalized OU reference-bank estimator, \cref{ass:hessian-concentration} is retained as an explicit effective-sample-size hypothesis rather than derived here.

\subsection{Proof of \cref{thm:hessian-reference-condition}}
\label{app:proof-thm-hessian-reference-condition}

This theorem is a deterministic consequence of \cref{ass:hessian-concentration} and \cref{prop:risk-weighted-lfgi-perturbation}.  Under $\epsilon_H\le1/2$,
\[
    \mathcal R(\Ghat_N)-\mathcal R(\Gstar)
    \le
    4\alphat^4\epsilon_H^2\Lambdapole(y,t).
\]
It is sufficient for the right-hand side to be at most $\eta\Gainstar(y,t)$.  Equivalently,
\[
    \epsilon_H
    \le
    \sqrt{\frac{\eta\Gainstar(y,t)}{4\alphat^4\Lambdapole(y,t)}}.
\]
Combining this threshold with the small-perturbation requirement $\epsilon_H\le1/2$ gives the right-hand side of \cref{eq:hessian-reference-sufficient-full}.  If the Bernstein linear term is ignored and only the square-root variance term is kept, solving the resulting inequality for $N_{\eff}$ gives \cref{eq:hessian-reference-sufficient}.


\section{Proofs for normalized-density evaluation}
\label{app:density-proofs}

\subsection{Proof of \cref{prop:closed-form-lfgi-divergence-density}}
\label{app:proof-closed-form-lfgi-divergence-density}
\begin{proof}
Write the SNIS weights as $w_i$ and define centered noisy-score signals $\Delta b_i=b_i-\widehat b$ and centered precisions $\Delta P_i=P_i-\widehat H$.  For OU weights, the derivative of the normalized weights is
\[
    \partial_{y_a}w_i=w_i(\Delta b_i)_a.
\]

Consequently
\[
    \partial_y\widehat b=J_b=\widehat C_{bb}-\gammat^{-1}I_d,
    \qquad
    \partial_y\widehat c=\widehat C_{cb},
    \qquad
    \partial_y\widehat\delta=J_{\widehat\delta}=\widehat C_{cb}-J_b .
\]
Moreover, for each coordinate $a$,
\[
    (T_a)_{uv}:=\partial_{y_a}\widehat H_{uv}
    =\sum_i w_i(\Delta b_i)_a(\Delta P_i)_{uv}.
\]
Let $A=\alphat^2I_d+\gammat\widehat H$ and $\widehat G=\alphat^2A^{-1}$.  The resolvent derivative is
\[
    \partial_{y_a}\widehat G
    =-\alphat^2A^{-1}(\gammat T_a)A^{-1}
    =-\frac{\gammat}{\alphat^2}\widehat G T_a\widehat G .
\]
Taking the divergence of $\widehat s=\widehat b+\widehat G\widehat\delta$ gives
\[
    \nabla\!\cdot\widehat s
    =\tr(J_b)+\sum_a e_a^\top\widehat G(\partial_{y_a}\widehat\delta)
      +\sum_a e_a^\top(\partial_{y_a}\widehat G)\widehat\delta .
\]
The middle term is $\langle\widehat G,J_{\widehat\delta}^\top\rangle_F$.  Substituting the resolvent derivative into the last term yields
\[
    -\frac{\gammat}{\alphat^2}
    \sum_{a,u,v}\widehat G_{au}T_{auv}(\widehat G\widehat\delta)_v,
\]
which proves the formula.
\end{proof}

\subsection{Derivation of correction and evidence identities}
\label{app:derivation-correction-evidence-identities}
For any proposal density $q$ whose support covers the posterior support and any integrable test function $f$,
\[
    \E_{p_0}f
    =\frac{\int f(x)\tildep(x)\,dx}{Z}
    =\frac{\E_q[f(X)\tildep(X)/q(X)]}{\E_q[\tildep(X)/q(X)]}.
\]
Taking $q=\qpf$ gives the correction weights used for surrogate-generated samples.  The forward evidence identity follows from
\[
    \E_{\qpf}\left[\frac{\tildep(Y)}{\qpf(Y)}\right]
    =\int \tildep(y)\,dy
    =Z,
\]
and the reciprocal identity for posterior-pilot samples follows from
\[
    \E_{p_0}\left[\frac{\qpf(X)}{\tildep(X)}\right]
    =\int \frac{\qpf(x)}{Zp_0(x)}p_0(x)\,dx
    =\frac1Z.
\]
The pointwise diagnostic is obtained by taking logarithms of the ideal identity $p_0=\tildep/Z$: when $\qpf=p_0$, $\log\tildep(x)-\log\qpf(x)=\log Z$ pointwise.  Away from that ideal case the same quantity is a calibration diagnostic rather than an exact evidence identity.

\subsection{Proof of \cref{lem:gn-psd-conditioning,prop:proxy-hessian-gate-density}}
\label{app:proof-gn-proxy-density}
\begin{proof}
For the PSD lemma, convexity of the Loewner cone gives $\widehat H_{\GN}=\sum_i\omega_iP_i^{\GN}\succeq\lambda_0I_d$.  Hence $\widehat A_{\GN}=\alphat^2I_d+\gammat\widehat H_{\GN}\succeq(\alphat^2+\gammat\lambda_0)I_d$.  Spectral calculus applied to $\widehat G_{\GN}=\alphat^2\widehat A_{\GN}^{-1}$ gives the eigenvalue bound.

For the proxy perturbation proposition, write
\[
    \widetilde A=A+\gammat\Delta
    =A^{1/2}(I+E)A^{1/2},
    \qquad
    E=\gammat A^{-1/2}\Delta A^{-1/2}.
\]
If $\|E\|_{\op}=\varepsilon_{\rm prox}<1$, then $(I+E)^{-1}$ exists and
\[
    \|(I+E)^{-1}-I\|_{\op}\le \frac{\varepsilon_{\rm prox}}{1-\varepsilon_{\rm prox}}.
\]
Therefore
\[
    \|\widetilde G-\Gstar\|_{\op}
    =\alphat^2\|A^{-1/2}[(I+E)^{-1}-I]A^{-1/2}\|_{\op}
    \le
    \alphat^2\|A^{-1/2}\|_{\op}^2
    \frac{\varepsilon_{\rm prox}}{1-\varepsilon_{\rm prox}}.
\]
Substituting this operator perturbation into the local quadratic excess-risk identity gives the final claim.
\end{proof}


\section{Closed-form Gaussian formulas}
\label{app:gaussian-formulas}

Let $p_0=\N(m,P^{-1})$ with $P\succ0$.  Write $\Sigma=P^{-1}$.  Under the OU forward process,
\[
    Y_t=\alphat X_0+\sqrt{\gammat}\xi,
    \qquad
    \xi\sim\N(0,I_d),
\]
the OU marginal is
\[
    Y_t\sim\N(\alphat m,C_t),
    \qquad
    C_t=\alphat^2P^{-1}+\gammat I_d.
\]
Hence the exact OU-marginal score is
\[
    s_t(y)
    =
    -C_t^{-1}(y-\alphat m)
    =
    -(\alphat^2I_d+\gammat P)^{-1}P(y-\alphat m).
\]

The OU posterior $X_0\mid Y_t=y$ is Gaussian with precision, covariance, and mean
\[
    Q_t=P+\frac{\alphat^2}{\gammat}I_d,
    \qquad
    \Sigma_{0\mid t}=Q_t^{-1},
    \qquad
    m_{0\mid t}(y)
    =
    Q_t^{-1}\left(Pm+\frac{\alphat}{\gammat}y\right).
\]
The posterior score is
\[
    \nabla_x\log p_0(x\mid Y_t=y)
    =
    -Q_t(x-m_{0\mid t}(y)).
\]

The target score and observed information are
\[
    s_0(x)=-P(x-m),
    \qquad
    H_0(x)=P.
\]
Thus the Tweedie, TSI, and disagreement signals are
\[
    b(x;y,t)=\frac{\alphat x-y}{\gammat},
    \qquad
    c(x;t)=-\frac1{\alphat}P(x-m),
\]
\[
    \delta(x;y,t)=c(x;t)-b(x;y,t)
    =
    \frac1{\alphat}\nabla_x\log p_0(x\mid Y_t=y)
    =
    -\frac1{\alphat}Q_t(x-m_{0\mid t}(y)).
\]
Since the posterior Fisher information is $Q_t$, the normal-equation moments are
\[
    \Cdd(t)=\E[\delta\delta^\top\mid Y_t=y]
    =
    \frac1{\alphat^2}Q_t
    =
    \frac{1}{\gammat\alphat^2}(\alphat^2I_d+\gammat P),
    \qquad
    \Cebd(t)=-\frac1{\gammat}I_d.
\]
Finally, the exact LFGI gate is
\[
    \Gstar(t)
    =
    -\Cebd(t)\Cdd(t)^{-1}
    =
    \alphat^2(\alphat^2I_d+\gammat P)^{-1}.
\]
If $Pu_j=\lambda_j u_j$, then
\[
    \Gstar(t)u_j
    =
    \frac{\alphat^2}{\alphat^2+\gammat\lambda_j}u_j,
\]
which is the spectral attenuation formula used above.


\bibliographystyle{plainnat}
\bibliography{refs}

@book{vershynin2018high,
  title={High-Dimensional Probability},
  author={Vershynin, Roman},
  year={2018},
  publisher={Cambridge University Press}
}

@book{wainwright2019high,
  title={High-Dimensional Statistics},
  author={Wainwright, Martin J.},
  year={2019},
  publisher={Cambridge University Press}
}

@book{liu2001combined,
  title={Monte Carlo Strategies in Scientific Computing},
  author={Liu, Jun S.},
  series={Springer Series in Statistics},
  year={2001},
  publisher={Springer},
  address={New York}
}

@article{kong1994sequential,
  title={Sequential imputations and Bayesian missing data problems},
  author={Kong, Augustine and Liu, Jun S. and Wong, Wing Hung},
  journal={Journal of the American Statistical Association},
  volume={89},
  number={425},
  pages={278--288},
  year={1994},
  publisher={Taylor \& Francis}
}

@book{vaart1998asymptotic,
  title={Asymptotic Statistics},
  author={van der Vaart, Aad W.},
  year={1998},
  publisher={Cambridge University Press}
}

@inproceedings{song2021score,
  title        = {Score-Based Generative Modeling through Stochastic Differential Equations},
  author       = {Song, Yang and Sohl-Dickstein, Jascha and Kingma, Diederik P. and Kumar, Abhishek and Ermon, Stefano and Poole, Ben},
  booktitle    = {International Conference on Learning Representations (ICLR)},
  year         = {2021},
  url          = {https://openreview.net/forum?id=PxTIG12RRHS},
  eprint       = {2011.13456},
  archivePrefix= {arXiv},
  primaryClass = {cs.LG}
}

@inproceedings{liu2016svgd,
  title        = {Stein Variational Gradient Descent: A General Purpose Bayesian Inference Algorithm},
  author       = {Liu, Qiang and Wang, Dilin},
  booktitle    = {Advances in Neural Information Processing Systems (NeurIPS)},
  year         = {2016},
  eprint       = {1608.04471},
  archivePrefix= {arXiv},
  primaryClass = {stat.ML},
  url          = {https://arxiv.org/abs/1608.04471}
}

@book{owen2013mc,
  title        = {Monte Carlo: Theory, Methods and Examples},
  author       = {Owen, Art B.},
  year         = {2013},
  publisher    = {Stanford University},
  url          = {https://artowen.su.domains/mc/},
  note         = {Online book}
}

@book{robert2004montecarlo,
  title        = {Monte Carlo Statistical Methods},
  author       = {Robert, Christian P. and Casella, George},
  edition      = {2},
  year         = {2004},
  publisher    = {Springer},
  address      = {New York},
  isbn         = {978-0387212395}
}

@article{hyvarinen2005score,
  title={Estimation of Non-Normalized Statistical Models by Score Matching},
  author={Hyv{\"a}rinen, Aapo},
  journal={Journal of Machine Learning Research},
  volume={6},
  pages={695--709},
  year={2005}
}

@article{sriperumbudur2017infinite,
  title={Density Estimation in Infinite Dimensional Exponential Families},
  author={Sriperumbudur, Bharath K and Fukumizu, Kenji and Gretton, Arthur and Hyv{\"a}rinen, Aapo and Kumar, Revant},
  journal={Journal of Machine Learning Research},
  volume={18},
  number={57},
  pages={1--59},
  year={2017}
}

@inproceedings{arbel2018kcef,
  title={Kernel Conditional Exponential Family},
  author={Arbel, Michael and Gretton, Arthur},
  booktitle={Proceedings of the 21st International Conference on Artificial Intelligence and Statistics (AISTATS)},
  series={Proceedings of Machine Learning Research},
  volume={84},
  pages={1337--1346},
  year={2018},
  publisher={PMLR}
}

@inproceedings{wenliang2019deepkef,
  title={Learning Deep Kernels for Exponential Family Densities},
  author={Wenliang, Li K. and Sutherland, Danica J. and Strathmann, Heiko and Gretton, Arthur},
  booktitle={International Conference on Machine Learning (ICML)},
  series={Proceedings of Machine Learning Research},
  volume={97},
  year={2019},
  pages={6737--6746},
  publisher={PMLR}
}

@inproceedings{zhou2020nonparametricscore,
  title={Nonparametric Score Estimators},
  author={Zhou, Yuhao and Shi, Jiaxin and Zhu, Jun},
  booktitle={International Conference on Machine Learning (ICML)},
  series={Proceedings of Machine Learning Research},
  volume={119},
  pages={11513--11523},
  year={2020},
  publisher={PMLR}
}

@inproceedings{liu2016ksd,
  title={A Kernelized Stein Discrepancy for Goodness-of-fit Tests},
  author={Liu, Qiang and Lee, Jason and Jordan, Michael},
  booktitle={Proceedings of the 33rd International Conference on Machine Learning (ICML)},
  series={Proceedings of Machine Learning Research},
  volume={48},
  pages={276--284},
  year={2016},
  publisher={PMLR}
}

@inproceedings{chwialkowski2016kernelgof,
  title={A Kernel Test of Goodness of Fit},
  author={Chwialkowski, Krzysztof and Strathmann, Heiko and Gretton, Arthur},
  booktitle={Proceedings of the 33rd International Conference on Machine Learning (ICML)},
  series={Proceedings of Machine Learning Research},
  volume={48},
  pages={2606--2615},
  year={2016},
  publisher={PMLR}
}

@inproceedings{korba2021ksddescent,
  title={Kernel Stein Discrepancy Descent},
  author={Korba, Anna and Aubin-Frankowski, Pierre-Cyril and Majewski, Szymon and Ablin, Pierre},
  booktitle={International Conference on Machine Learning (ICML)},
  series={Proceedings of Machine Learning Research},
  volume={139},
  year={2021},
  publisher={PMLR}
}

@inproceedings{arbel2019mmdflow,
  title={Maximum Mean Discrepancy Gradient Flow},
  author={Arbel, Michael and Korba, Anna and Salim, Adil and Gretton, Arthur},
  booktitle={Advances in Neural Information Processing Systems (NeurIPS)},
  year={2019}
}

@inproceedings{ho2020denoising,
  title        = {Denoising Diffusion Probabilistic Models},
  author       = {Ho, Jonathan and Jain, Ajay and Abbeel, Pieter},
  booktitle    = {Advances in Neural Information Processing Systems (NeurIPS)},
  year         = {2020},
  url          = {https://proceedings.neurips.cc/paper/2020/file/4c5bcfec8584af0d967f1ab10179ca4b-Paper.pdf},
  eprint       = {2006.11239},
  archivePrefix= {arXiv},
  primaryClass = {cs.LG}
}

@article{efron2011tweedie,
  title        = {Tweedie’s Formula and Selection Bias},
  author       = {Efron, Bradley},
  journal      = {Journal of the American Statistical Association},
  volume       = {106},
  number       = {496},
  pages        = {1602--1614},
  year         = {2011},
  doi          = {10.1198/jasa.2011.tm11181}
}

@article{vincent2011connection,
  title        = {A Connection Between Score Matching and Denoising Autoencoders},
  author       = {Vincent, Pascal},
  journal      = {Neural Computation},
  volume       = {23},
  number       = {7},
  pages        = {1661--1674},
  year         = {2011},
  doi          = {10.1162/NECO_a_00142}
}

@article{gretton2012kernel,
  title={A Kernel Two-Sample Test},
  author={Gretton, Arthur and Borgwardt, Karsten and Rasch, Malte and Sch{\"o}lkopf, Bernhard and Smola, Alexander},
  journal={Journal of Machine Learning Research},
  volume={13},
  pages={723--773},
  year={2012}
}

@book{silverman1986density,
  title={Density Estimation for Statistics and Data Analysis},
  author={Silverman, Bernard W},
  year={1986},
  publisher={CRC press}
}

@article{debortoli2024tsm,
  title   = {Target Score Matching},
  author  = {De Bortoli, Valentin and Hutchinson, Michael and Wirnsberger, Peter and Doucet, Arnaud},
  journal = {arXiv preprint arXiv:2402.08667},
  year    = {2024},
  doi     = {10.48550/arXiv.2402.08667}
}

@article{phillips2024pdds,
  title   = {Particle Denoising Diffusion Sampler},
  author  = {Phillips, Angus and Dau, Hai-Dang and Hutchinson, Michael John and De Bortoli, Valentin and Deligiannidis, George and Doucet, Arnaud},
  journal = {arXiv preprint arXiv:2402.06320},
  year    = {2024},
  doi     = {10.48550/arXiv.2402.06320}
}

@article{akhound2024idem,
  title   = {Iterated Denoising Energy Matching for Sampling from Boltzmann Densities},
  author  = {Akhound-Sadegh, Tara and Rector-Brooks, Jarrid and Bose, Avishek Joey and Mittal, Sarthak and Lemos, Pablo and Liu, Cheng-Hao and Sendera, Marcin and Ravanbakhsh, Siamak and Gidel, Gauthier and Bengio, Yoshua and Malkin, Nikolay and Tong, Alexander},
  journal = {arXiv preprint arXiv:2402.06121},
  year    = {2024},
  doi     = {10.48550/arXiv.2402.06121}
}

@inproceedings{havens2025adjoint,
  title     = {Adjoint Sampling: Highly Scalable Diffusion Samplers via Adjoint Matching},
  author    = {Havens, Aaron J. and Miller, Benjamin Kurt and Yan, Bing and Domingo-Enrich, Carles and Sriram, Anuroop and Levine, Daniel S. and Wood, Brandon M. and Hu, Bin and Amos, Brandon and Karrer, Brian and Fu, Xiang and Liu, Guan-Horng and Chen, Ricky T. Q.},
  booktitle = {Proceedings of the 42nd International Conference on Machine Learning},
  series    = {Proceedings of Machine Learning Research},
  volume    = {267},
  pages     = {22204--22237},
  year      = {2025},
  publisher = {PMLR},
  url       = {https://proceedings.mlr.press/v267/havens25a.html},
  eprint    = {2504.11713},
  archivePrefix = {arXiv},
  primaryClass  = {cs.LG},
  doi       = {10.48550/arXiv.2504.11713}
}

@article{kahouli2025cvsm,
  title   = {Control Variate Score Matching for Diffusion Models},
  author  = {Kahouli, Khaled and Elie, Romuald and M{\"u}ller, Klaus-Robert and Berthet, Quentin and Unke, Oliver T. and Doucet, Arnaud},
  journal = {arXiv preprint arXiv:2512.20003},
  year    = {2025},
  doi     = {10.48550/arXiv.2512.20003}
}

@inproceedings{sohldickstein2015nonequilibrium,
  title     = {Deep Unsupervised Learning using Nonequilibrium Thermodynamics},
  author    = {Sohl-Dickstein, Jascha and Weiss, Eric and Maheswaranathan, Niru and Ganguli, Surya},
  booktitle = {Proceedings of the 32nd International Conference on Machine Learning},
  series    = {Proceedings of Machine Learning Research},
  volume    = {37},
  pages     = {2256--2265},
  year      = {2015},
  publisher = {PMLR},
  url       = {https://proceedings.mlr.press/v37/sohl-dickstein15.html}
}

@article{young2026dpsmc,
  title         = {Diffusion Path Samplers via Sequential Monte Carlo},
  author        = {Young, James Matthew and Cordero-Encinar, Paula and Reich, Sebastian and Duncan, Andrew and Akyildiz, {"O}. Deniz},
  journal       = {arXiv preprint arXiv:2601.21951},
  year          = {2026},
  eprint        = {2601.21951},
  archivePrefix = {arXiv},
  primaryClass  = {stat.ML},
  url           = {https://arxiv.org/abs/2601.21951}
}

@inproceedings{huang2024rdmc,
  title     = {Reverse Diffusion Monte Carlo},
  author    = {Huang, Xunpeng and Dong, Hanze and Hao, Yifan and Ma, Yian and Zhang, Tong},
  booktitle = {The Twelfth International Conference on Learning Representations},
  year      = {2024},
  url       = {https://openreview.net/forum?id=esc8PjUQ8e}
}

@inproceedings{grenioux2024slips,
  title     = {Stochastic Localization via Iterative Posterior Sampling},
  author    = {Grenioux, Louis and Noble, Maxence and Gabri{\'e}, Marylou and Durmus, Alain Oliviero},
  booktitle = {Proceedings of the 41st International Conference on Machine Learning},
  series    = {Proceedings of Machine Learning Research},
  year      = {2024},
  publisher = {PMLR}
}

@inproceedings{he2024zeroth,
  title     = {Zeroth-Order Sampling Methods for Non-Log-Concave Distributions: Alleviating Metastability by Denoising Diffusion},
  author    = {He, Ye and Rojas, Kevin and Tao, Molei},
  booktitle = {Advances in Neural Information Processing Systems},
  volume    = {37},
  pages     = {71122--71161},
  year      = {2024}
}

@inproceedings{wu2025rdsmc,
  title     = {Reverse Diffusion Sequential Monte Carlo Samplers},
  author    = {Wu, Luhuan and Han, Yi and Naesseth, Christian A. and Cunningham, John P.},
  booktitle = {Advances in Neural Information Processing Systems},
  year      = {2025},
  eprint        = {2508.05926},
  archivePrefix = {arXiv},
  primaryClass  = {cs.LG},
  url           = {https://arxiv.org/abs/2508.05926}
}

@article{delmoral2006smc,
  title   = {Sequential Monte Carlo Samplers},
  author  = {Del Moral, Pierre and Doucet, Arnaud and Jasra, Ajay},
  journal = {Journal of the Royal Statistical Society: Series B (Statistical Methodology)},
  volume  = {68},
  number  = {3},
  pages   = {411--436},
  year    = {2006},
  doi     = {10.1111/j.1467-9868.2006.00553.x}
}

@inproceedings{he2025reversekl,
  title     = {Training Neural Samplers with Reverse Diffusive {KL} Divergence},
  author    = {He, Jiajun and Chen, Wenlin and Zhang, Mingtian and Barber, David and Hern{\'a}ndez-Lobato, Jos{\'e} Miguel},
  booktitle = {Proceedings of The 28th International Conference on Artificial Intelligence and Statistics},
  series    = {Proceedings of Machine Learning Research},
  year      = {2025},
  publisher = {PMLR}
}

@inproceedings{cordero2025sampling,
  title     = {Sampling by Averaging: A Multiscale Approach to Score Estimation},
  author    = {Cordero-Encinar, Paula and Duncan, Andrew B. and Reich, Sebastian and Akyildiz, {"O}. Deniz},
  booktitle = {Advances in Neural Information Processing Systems},
  year      = {2025}
}

@inproceedings{ko2025latent,
  title     = {Latent Target Score Matching, with an Application to Simulation-Based Inference},
  author    = {Ko, Joohwan and Geffner, Tomas},
  booktitle = {Machine Learning and the Physical Sciences Workshop, NeurIPS},
  year      = {2025}
}

@article{girolami2011riemann,
  title   = {Riemann Manifold Langevin and Hamiltonian Monte Carlo Methods},
  author  = {Girolami, Mark and Calderhead, Ben},
  journal = {Journal of the Royal Statistical Society: Series B (Statistical Methodology)},
  volume  = {73},
  number  = {2},
  pages   = {123--214},
  year    = {2011},
  doi     = {10.1111/j.1467-9868.2010.00765.x}
}

@inproceedings{vargas2024transport,
  title     = {Transport Meets Variational Inference: Controlled Monte Carlo Diffusions},
  author    = {Vargas, Francisco and Padhy, Shreyas and Blessing, Denis and N{"u}sken, Nikolas},
  booktitle = {The Twelfth International Conference on Learning Representations},
  year      = {2024},
  url       = {https://openreview.net/forum?id=PP1rudnxiW}
}

@inproceedings{richter2024improved,
  title     = {Improved Sampling via Learned Diffusions},
  author    = {Richter, Lorenz and Berner, Julius},
  booktitle = {The Twelfth International Conference on Learning Representations},
  year      = {2024},
  url       = {https://openreview.net/forum?id=F2cS6SozN9}
}

@inproceedings{chen2025sequential,
  title     = {Sequential Controlled Langevin Diffusions},
  author    = {Chen, Junhua and Richter, Lorenz and Berner, Julius and Blessing, Denis and Neumann, Gerhard and Anandkumar, Anima},
  booktitle = {The Thirteenth International Conference on Learning Representations},
  year      = {2025}
}

@inproceedings{zhang2022pathintegral,
  title     = {Path Integral Sampler: A Stochastic Control Approach for Sampling},
  author    = {Zhang, Qinsheng and Chen, Yongxin},
  booktitle = {International Conference on Learning Representations},
  year      = {2022},
  url       = {https://openreview.net/forum?id=_uCb2ynRu7Y}
}

@inproceedings{noble2025learned,
  title     = {Learned Reference-Based Diffusion Sampler for Multi-Modal Distributions},
  author    = {Noble, Maxence and Grenioux, Louis and Gabri{\'e}, Marylou and Durmus, Alain Oliviero},
  booktitle = {The Thirteenth International Conference on Learning Representations},
  year      = {2025}
}

@article{anderson1982reverse,
  title   = {Reverse-Time Diffusion Equation Models},
  author  = {Anderson, Brian D. O.},
  journal = {Stochastic Processes and their Applications},
  volume  = {12},
  number  = {3},
  pages   = {313--326},
  year    = {1982},
  doi     = {10.1016/0304-4149(82)90051-5}
}

@article{stuart2010inverse,
  title   = {Inverse Problems: A Bayesian Perspective},
  author  = {Stuart, Andrew M.},
  journal = {Acta Numerica},
  volume  = {19},
  pages   = {451--559},
  year    = {2010},
  doi     = {10.1017/S0962492910000061}
}

@incollection{stein1972bound,
  title     = {A Bound for the Error in the Normal Approximation to the Distribution of a Sum of Dependent Random Variables},
  author    = {Stein, Charles},
  booktitle = {Proceedings of the Sixth Berkeley Symposium on Mathematical Statistics and Probability, Volume 2: Probability Theory},
  pages     = {583--602},
  year      = {1972},
  publisher = {University of California Press}
}

@article{tierney1986laplace,
  title   = {Accurate Approximations for Posterior Moments and Marginal Densities},
  author  = {Tierney, Luke and Kadane, Joseph B.},
  journal = {Journal of the American Statistical Association},
  volume  = {81},
  number  = {393},
  pages   = {82--86},
  year    = {1986},
  doi     = {10.1080/01621459.1986.10478240}
}

@article{tropp2012user,
  title   = {User-Friendly Tail Bounds for Sums of Random Matrices},
  author  = {Tropp, Joel A.},
  journal = {Foundations of Computational Mathematics},
  volume  = {12},
  number  = {4},
  pages   = {389--434},
  year    = {2012},
  doi     = {10.1007/s10208-011-9099-z}
}

@book{nocedal2006numerical,
  title     = {Numerical Optimization},
  author    = {Nocedal, Jorge and Wright, Stephen J.},
  edition   = {2},
  year      = {2006},
  publisher = {Springer},
  address   = {New York},
  doi       = {10.1007/978-0-387-40065-5}
}

@inproceedings{cuturi2013sinkhorn,
  author    = {Cuturi, Marco},
  title     = {Sinkhorn Distances: Lightspeed Computation of Optimal Transport},
  booktitle = {Advances in Neural Information Processing Systems},
  volume    = {26},
  year      = {2013}
}

@article{isserlis1918formula,
  author  = {Isserlis, Leon},
  title   = {On a Formula for the Product-Moment Coefficient of any Order of a Normal Frequency Distribution in any Number of Variables},
  journal = {Biometrika},
  volume  = {12},
  number  = {1/2},
  pages   = {134--139},
  year    = {1918}
}

@article{kolmogorov1933sulla,
  author  = {Kolmogorov, Andrey N.},
  title   = {Sulla determinazione empirica di una legge di distribuzione},
  journal = {Giornale dell'Istituto Italiano degli Attuari},
  volume  = {4},
  pages   = {83--91},
  year    = {1933}
}

@article{smirnov1948table,
  author  = {Smirnov, Nikolai V.},
  title   = {Table for Estimating the Goodness of Fit of Empirical Distributions},
  journal = {The Annals of Mathematical Statistics},
  volume  = {19},
  number  = {2},
  pages   = {279--281},
  year    = {1948}
}

@article{martino2017effective,
  author  = {Martino, Luca and Elvira, V{\'i}ctor and Louzada, Francisco},
  title   = {Effective Sample Size for Importance Sampling Based on Discrepancy Measures},
  journal = {Signal Processing},
  volume  = {131},
  pages   = {386--401},
  year    = {2017}
}

@article{Bui-ThanhGirolami14,
  author  = {Bui-Thanh, Tan and Girolami, Mark A.},
  title   = {Solving Large-Scale {PDE}-Constrained {B}ayesian Inverse Problems with {R}iemann Manifold {H}amiltonian {M}onte {C}arlo},
  journal = {Inverse Problems},
  volume  = {30},
  number  = {11},
  pages   = {114014},
  year    = {2014},
  doi     = {10.1088/0266-5611/30/11/114014}
}

@article{MartinWilcoxBursteddeEtAl12,
  author  = {Martin, James and Wilcox, Lucas C. and Burstedde, Carsten and Ghattas, Omar},
  title   = {A Stochastic {Newton MCMC} Method for Large-Scale Statistical Inverse Problems with Application to Seismic Inversion},
  journal = {SIAM Journal on Scientific Computing},
  volume  = {34},
  number  = {3},
  pages   = {A1460--A1487},
  year    = {2012},
  doi     = {10.1137/110845598}
}

@inproceedings{chen2018neuralode,
  author    = {Chen, Ricky T. Q. and Rubanova, Yulia and Bettencourt, Jesse and Duvenaud, David K.},
  title     = {Neural Ordinary Differential Equations},
  booktitle = {Advances in Neural Information Processing Systems},
  volume    = {31},
  year      = {2018},
  eprint    = {1806.07366},
  archivePrefix = {arXiv},
  primaryClass  = {cs.LG}
}

@inproceedings{grathwohl2019ffjord,
  author    = {Grathwohl, Will and Chen, Ricky T. Q. and Bettencourt, Jesse and Sutskever, Ilya and Duvenaud, David},
  title     = {{FFJORD}: Free-Form Continuous Dynamics for Scalable Reversible Generative Models},
  booktitle = {International Conference on Learning Representations},
  year      = {2019},
  eprint    = {1810.01367},
  archivePrefix = {arXiv},
  primaryClass  = {cs.LG}
}

@inproceedings{song2021maximum,
  author    = {Song, Yang and Durkan, Conor and Murray, Iain and Ermon, Stefano},
  title     = {Maximum Likelihood Training of Score-Based Diffusion Models},
  booktitle = {Advances in Neural Information Processing Systems},
  volume    = {34},
  pages     = {1415--1428},
  year      = {2021},
  eprint    = {2101.09258},
  archivePrefix = {arXiv},
  primaryClass  = {cs.LG}
}

@inproceedings{lu2022maximum,
  author    = {Lu, Cheng and Zheng, Kaiwen and Bao, Fan and Chen, Jianfei and Li, Chongxuan and Zhu, Jun},
  title     = {Maximum Likelihood Training for Score-Based Diffusion {ODE}s by High-Order Denoising Score Matching},
  booktitle = {Proceedings of the 39th International Conference on Machine Learning},
  series    = {Proceedings of Machine Learning Research},
  volume    = {162},
  pages     = {14429--14460},
  publisher = {PMLR},
  year      = {2022},
  eprint    = {2206.08265},
  archivePrefix = {arXiv},
  primaryClass  = {cs.LG}
}

@inproceedings{chen2023probability,
  author    = {Chen, Sitan and Chewi, Sinho and Lee, Holden and Li, Yuanzhi and Lu, Jianfeng and Salim, Adil},
  title     = {The Probability Flow {ODE} Is Provably Fast},
  booktitle = {Advances in Neural Information Processing Systems},
  volume    = {36},
  year      = {2023},
  eprint    = {2305.11798},
  archivePrefix = {arXiv},
  primaryClass  = {cs.LG}
}

@article{benton2023error,
  author  = {Benton, Joe and Deligiannidis, George and Doucet, Arnaud},
  title   = {Error Bounds for Flow Matching Methods},
  journal = {Transactions on Machine Learning Research},
  year    = {2024},
  eprint  = {2305.16860},
  archivePrefix = {arXiv},
  primaryClass  = {stat.ML}
}

@article{meng1996bridge,
  author  = {Meng, Xiao-Li and Wong, Wing Hung},
  title   = {Simulating Ratios of Normalizing Constants via a Simple Identity: A Theoretical Exploration},
  journal = {Statistica Sinica},
  volume  = {6},
  number  = {4},
  pages   = {831--860},
  year    = {1996}
}

@article{neal2001annealed,
  author  = {Neal, Radford M.},
  title   = {Annealed Importance Sampling},
  journal = {Statistics and Computing},
  volume  = {11},
  number  = {2},
  pages   = {125--139},
  year    = {2001},
  doi     = {10.1023/A:1008923215028}
}

@article{gelman1998normalizing,
  author  = {Gelman, Andrew and Meng, Xiao-Li},
  title   = {Simulating Normalizing Constants: From Importance Sampling to Bridge Sampling to Path Sampling},
  journal = {Statistical Science},
  volume  = {13},
  number  = {2},
  pages   = {163--185},
  year    = {1998},
  doi     = {10.1214/ss/1028905934}
}

@article{skilling2006nested,
  author  = {Skilling, John},
  title   = {Nested Sampling for General Bayesian Computation},
  journal = {Bayesian Analysis},
  volume  = {1},
  number  = {4},
  pages   = {833--859},
  year    = {2006},
  doi     = {10.1214/06-BA127}
}

\end{document}